\definecolor{CherryRed}{rgb}{0.7,0.1,0.1}
\definecolor{EgyptBlue}{rgb}{0.1,0.1,0.6}
\newtheorem{thm}{Theorem}[section]
\theoremstyle{plain}
\newtheorem{lem}[thm]{Lemma}
\theoremstyle{plain}
\newtheorem{prop}[thm]{Proposition}
\theoremstyle{plain}
\theoremstyle{plain}
\theoremstyle{definition}
\newtheorem{definition}[thm]{Definition}
\newtheorem{rem}[thm]{Remark}
\newcommand{\R}{{\mathbb R}}
\newcommand{\capp}{{\mathrm{cap}_p}}
\title[On the higher Cheeger problem]{On the higher Cheeger problem}
\author[V. Bobkov]{Vladimir Bobkov}
\author[E. Parini]{Enea Parini}
\address[E. Parini]{Aix-Marseille Univ, CNRS, Centrale Marseille, I2M, 39 Rue Frederic Joliot Curie, 13453 Marseille, France}\email{enea.parini@univ-amu.fr}
\address[V. Bobkov]{Department of Mathematics and NTIS, Faculty of Applied Sciences, University of West Bohemia, Univerzitn\'i 8, 306 14 Plze\v{n}, Czech Republic} \email{bobkov@kma.zcu.cz}
\subjclass[2010]{49Q15; 49Q10; 53A10; 49Q20}
\keywords{Cheeger problem, higher Cheeger problem, optimal partitions, p-Laplacian}
\begin{document}

\begin{abstract}
	We develop the notion of higher Cheeger constants for a measurable set $\Omega \subset \mathbb{R}^N$. By the $k$-th Cheeger constant we mean the value \[h_k(\Omega) = \inf \max \{h_1(E_1), \dots, h_1(E_k)\},\] where the infimum is taken over all $k$-tuples of mutually disjoint subsets of $\Omega$, and $h_1(E_i)$ is the classical Cheeger constant of $E_i$. 
	We prove the existence of minimizers satisfying additional ``adjustment'' conditions and study their properties. 
	A relation between $h_k(\Omega)$ and spectral minimal $k$-partitions of $\Omega$ associated with the first eigenvalues of the $p$-Laplacian under homogeneous Dirichlet boundary conditions is stated. The results are applied to determine the second Cheeger constant of some planar domains.
\end{abstract}

\maketitle	

\section{Introduction}\label{sec:intro}

Let $\Omega \subset \mathbb{R}^N$ ($N \geq 2$) be a measurable set with positive Lebesgue measure, i.e., $|\Omega| > 0$. 
The \emph{Cheeger problem} consists in determining the value of the \emph{Cheeger constant} of $\Omega$, defined as 
$$
h_1(\Omega) := \inf \left\{ \frac{P(E)}{|E|}:~ E \subset \Omega,~ |E| > 0 \right\},
$$
and the associated minimizing sets, each of which is called \emph{Cheeger set}. Here $P(E) := P(E; \mathbb{R}^N)$ is the distributional perimeter of $E$ with respect to $\mathbb{R}^N$ (cf.\ \cite{giusti}). For an introduction to the Cheeger problem we refer to the expository articles \cite{Parini2011} and \cite{leonardi}, and the references therein.

A natural generalization of the above concept is the \emph{higher Cheeger problem}, where minimization takes place among $k$-tuples of mutually disjoint subsets of $\Omega$. More precisely, the \textit{$k$-th Cheeger constant} of $\Omega$ is defined as
\begin{equation} \label{higherconstant1}
h_k(\Omega) := \inf \left\{ \max_{i=1,\dots,k} \frac{P(E_i)}{|E_i|}:~ E_i \subset \Omega,~ |E_i| > 0 ~ \forall i,~ E_i \cap E_j = \emptyset ~ \forall i \neq j \right\},
\end{equation}
or, in an equivalent way, as
\begin{equation} \label{cheegerpartition}
h_k(\Omega) := \inf \left\{ \max_{i=1,\dots,k} h_1(E_i):~ E_i \subset \Omega,~ |E_i| > 0 ~ \forall i,~ E_i \cap E_j = \emptyset ~ \forall i \neq j \right\}.
\end{equation}
Besides having a geometric interest on its own, the Cheeger problem, as well as its generalization, arises in the study of the asymptotic behaviour of the eigenvalue problem for the $p$-Laplacian as $p$ tends to $1$. Assume that $\Omega$ is a bounded domain. We say that $\lambda \in \R$ is an \emph{eigenvalue} of the $p$-Laplacian if there exists a nontrivial weak solution $u \in W^{1,p}_0(\Omega)$, which is called \emph{eigenfunction}, to the problem
\begin{equation}\label{p-Laplacian}
\left\{
\begin{aligned}
-\Delta_p u &= \lambda |u|^{p-2} u &&\text{in } \Omega,\\
u &= 0 &&\text{on } \partial \Omega.
\end{aligned}
\right.
\end{equation}
Here $p \in (1,+\infty)$, and $\Delta_p u:=\text{div}(|\nabla u|^{p-2}\nabla u)$. The existence of sequences of eigenvalues of the $p$-Laplacian can be proven by means of minimax principles, see, e.g., \cite{garciaazoreroperal, drabrob1999}. In this paper, we will focus on the sequence $\{\lambda_k(p;\Omega)\}_{k \in \mathbb{N}}$ defined using the Krasnoselskii genus \cite[\S~5]{garciaazoreroperal}, which satisfies
$$ 
0 < \lambda_1(p;\Omega) < \lambda_2(p;\Omega) \leq \dots \leq \lambda_k(p;\Omega) \to +\infty \quad \text{as } k \to +\infty.
$$
As for the behaviour of \eqref{p-Laplacian} for $p \to 1$, it was proven in \cite[Corollary~6 and Remark~7]{kawohlfridman} that
\[ \lim_{p \to 1} \lambda_1(p;\Omega) = h_1(\Omega).\]
A few years later, the result was generalized to higher eigenvalues. In \cite[Theorem~5.5]{Parini} it was proven that
\begin{equation}\label{secondpto1} 
\lim_{p \to 1} \lambda_2(p;\Omega) = h_2(\Omega)
\end{equation}
and, more in general,
\begin{equation}\label{higherpto1} 
\limsup_{p \to 1} \lambda_k(p;\Omega) \leq h_k(\Omega) \quad \text{for } k \geq 3.
\end{equation}
We also mention the paper \cite{littigschur}, where the authors showed that 
\begin{equation*}\label{eq:limlambdak}
\lim\limits_{p \to 1} \lambda_k(p; \Omega) = \Lambda_k(\Omega)
\quad 
\text{for } k \in \mathbb{N},
\end{equation*}
where $\Lambda_k(\Omega)$ is the $k$-th Krasnoselskii eigenvalue of the $1$-Laplace operator (cf.\ \cite{chang2009}). 

The reason of the discrepancy between \eqref{secondpto1} and \eqref{higherpto1} is the fact that, while every eigenfunction $u_2$ associated to $\lambda_2(p;\Omega)$ has exactly two nodal domains (namely, connected components of the set $\{u_2 \neq 0\}$), an eigenfunction $u_k$ associated to $\lambda_k(p;\Omega)$ does not have, in general, $k$ nodal domains. Therefore, and also in view of \eqref{cheegerpartition}, it makes sense to introduce the spectral minimal $k$-partition problem for the $p$-Laplacian as
$$
\mathfrak{L}_k(p;\Omega) := \inf \left\{ \max_{i=1,\dots,k} \lambda_1(p; E_i):~ E_i \subset \Omega,~ |E_i| > 0 ~ \forall i,~ E_i \cap E_j = \emptyset ~ \forall i \neq j \right\},
$$
see Section \ref{sec:spectral_minimal_partitions} below for precise definitions. The existence, regularity and qualitative properties of $\mathfrak{L}_k(p; \Omega)$ in the case $p=2$ have been studied intensively nowadays, see, e.g., \cite{bucurbutazzohenrot,conti2005,helffer2006,helffer2010}.

One of the main results of this paper is Theorem \ref{thm:1}, where we prove that $k$-th Cheeger constant $h_k(\Omega)$ can be characterized as 
$$
\lim_{p \to 1} \mathfrak{L}_k(p;\Omega) = h_k(\Omega).
$$
Let us mention that a related spectral partitioning problem was studied in \cite{caroccia}. In that paper, the author investigated the limit as $p \to 1$ of the quantity
$$
\Lambda_k^{(p)}(\Omega) := \inf \left\{ \sum_{i=1}^{k} \lambda_1(p;E_i):~ E_i \subset \Omega,~ |E_i| > 0 ~ \forall i,~ E_i \cap E_j = \emptyset ~ \forall i \neq j \right\}
$$
and proved its convergence towards
\begin{equation}\label{def:H_k}
H_k(\Omega) := \inf \left\{ \sum_{i=1}^{k} h_1(E_i):~ E_i \subset \Omega,~ |E_i| > 0 ~ \forall i,~ E_i \cap E_j = \emptyset ~ \forall i \neq j \right\}.
\end{equation}
Existence and qualitative properties of minimizing $k$-tuples of sets for $H_k(\Omega)$ were also comprehensively studied in \cite{caroccia}.
On the other hand, the functional $\Lambda_k^{(p)}(\Omega)$ in the case $p=2$ also attracted the attention of a significant number of researchers, see, for instance,  \cite{bucurbutazzohenrot,conti2005,caffarelli,bourdin}. 

Expressions \eqref{higherconstant1} and \eqref{cheegerpartition} can be rewritten in shorter forms as
\begin{equation}\label{hkmax2}
h_k(\Omega) = \inf_{(E_1,\dots,E_k) \in \mathcal{E}_k} \max_{i=1,\dots,k} \frac{P(E_i)}{|E_i|}
~\quad \text{and} \quad~ 
h_k(\Omega) = \inf_{(E_1,\dots,E_k) \in \mathcal{E}_k} \max_{i=1,\dots,k} h_1(E_i),
\end{equation}
where $\mathcal{E}_k$ is the family of all $k$-tuples of mutually disjoint measurable subsets of $\Omega$ with positive Lebesgue measure. 
Note that each $E_i$ can be assumed to be connected.
A natural question is whether the infimum in \eqref{hkmax2} is actually attained. A first result about the existence of a minimizing $k$-tuple $(E_1, \dots, E_k) \in \mathcal{E}_k$ for $h_k(\Omega)$ is proved in \cite[Theorem~3.1]{Parini}. In what follows, a minimizer of $h_k(\Omega)$ will be called a $k$-tuple of multiple Cheeger sets, or, simply, a \textit{Cheeger $k$-tuple} of $\Omega$.
If $k=2$, we also call minimizers $E_1$ and $E_2$ \textit{coupled Cheeger sets}. In general, a $k$-tuple of multiple Cheeger sets for $h_k(\Omega)$ need not be unique, as the following example shows\footnote{The example, as well as Figure \ref{fig:Fig1}, has only an illustrative purpose, since the various assertions are not rigorously proven.}. Set $k=3$ and let $\Omega$ be a union of a square $K$, a disc $B$, and a negligibly thin channel $T$ which connects $K$ and $B$, see Fig.~\ref{fig:Fig1}. 
On the one hand, we can take a radius of $B$ sufficiently small to get $h_2(K) < h_1(B)$. On the other hand, we can take a radius of $B$ sufficiently large to get $h_1(B) < h_3(K)$. Thus, $h_3(\Omega) = h_1(B)$ and we have some freedom to vary sets $E_2, E_3 \subset K$ of a minimizer $(E_1, E_2, E_3) \in \mathcal{E}_3$ for $h_3(\Omega)$ without loss of minimizing property. 

\begin{figure}[ht]
	\centering
	\includegraphics[width=0.9\linewidth]{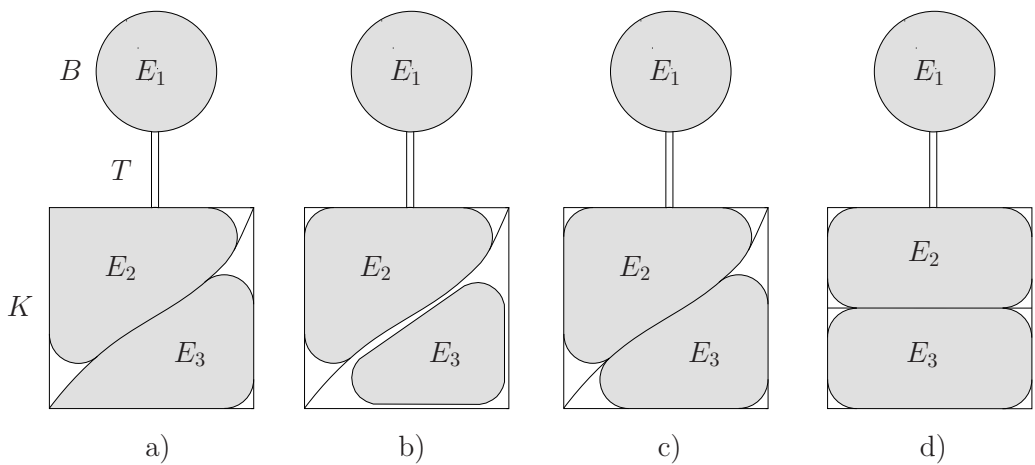}
	\caption{Assume $h_1(E_2), h_1(E_3) < h_1(E_1) = h_3(\Omega)$.
		a) A minimizer is not calibrable; 
		b) a minimizer is calibrable, but not a $1$-adjusted Cheeger triple; 
		c) a minimizer is a $1$- but not a $2$-adjusted Cheeger triple (the configuration in the square is not optimal); 
		d) a minimizer is a $2$-adjusted Cheeger triple.}
	\label{fig:Fig1}
\end{figure}

The above example also indicates that we cannot expect from an arbitrary taken Cheeger $k$-tuple of $\Omega$ to be regular enough. With this respect, it is reasonable to look for minimizers of $h_k(\Omega)$ satisfying some additional adjustment conditions. 
A first guess could be to require all sets of the Cheeger $k$-tuple to be \emph{calibrable} (cf. \cite{alter}), namely, each $E_i$ is a Cheeger set of itself:
$$
h_1(E_i) = \frac{P(E_i)}{|E_i|}
\quad \text{for all } i \in \{1,\dots,k\}.
$$
However, this is not completely satisfactory, as can be seen in the example in Fig.~\ref{fig:Fig1}. A more suitable requirement is defined as follows. We say that a minimizer $(E_1, \dots, E_k) \in \mathcal{E}_k$ of $h_k(\Omega)$ is a \textit{$1$-adjusted} Cheeger $k$-tuple if 
$$
h_1\biggl(\Omega \setminus \bigcup_{j \neq i} {E_j}\biggr) =
h_1(E_i) = 
\frac{P(E_i)}{|E_i|}
\quad \text{for all }
i \in \{1,\dots,k\},
$$
that is, each $E_i$ is a Cheeger set of $\Omega \setminus \bigcup_{j \neq i} {E_j}$. 

Analogously, we say that a minimizer $(E_1, \dots, E_k) \in \mathcal{E}_k$ of $h_k(\Omega)$ is a \textit{$2$-adjusted} Cheeger $k$-tuple if it is $1$-adjusted and, additionally,
\begin{align*}
h_2\biggl(\Omega \setminus \bigcup_{j \neq i_1, i_2} {E_j}\biggr) 
= h_2(E_{i_1} \cup E_{i_2}) 
&= \max\left\{\frac{P(E_{i_1})}{|E_{i_1}|}, \frac{P(E_{i_2})}{|E_{i_2}|}\right\} \\
&\text{for all mutually different } i_1, i_2 \in \{1,\dots,k\},
\end{align*}
that is, each pair $(E_{i_1}, E_{i_2})$ is a $1$-adjusted Cheeger couple of $\Omega \setminus \bigcup_{j \neq i_1, i_2} {E_j}$.

Proceeding in this way, we say that a minimizer $(E_1, \dots, E_k) \in \mathcal{E}_k$ of $h_k(\Omega)$ is a \textit{$n$-adjusted} Cheeger $k$-tuple for $n \in \{2, \dots, k\}$ if it is $(n-1)$-adjusted and, additionally,
\begin{align*}
h_n\biggl(\Omega \setminus \bigcup_{j \neq i_1, \dots, i_n} {E_j}\biggr)
=h_n(E_{i_1} \cup \cdots \cup E_{i_n})
&= \max\left\{\frac{P(E_{i_1})}{|E_{i_1}|}, \dots, \frac{P(E_{i_n})}{|E_{i_n}|}\right\} \\
&\text{for all mutually different } i_1, \dots, i_n \in \{1,\dots,k\},
\end{align*}
that is, each $n$-tuple $(E_{i_1}, \dots, E_{i_n})$ is a $(n-1)$-adjusted Cheeger $n$-tuple of $\Omega \setminus \bigcup_{j \neq i_1, \dots, i_n} {E_j}$. 

Equivalently, a minimizer $(E_1, \dots, E_k) \in \mathcal{E}_k$ of $h_k(\Omega)$ is a $n$-adjusted Cheeger $k$-tuple for $n \in \{1, \dots, k\}$ if any $l$-tuple $(E_{i_1}, \dots, E_{i_l}) \subset (E_1, \dots, E_k)$ with arbitrary $l \in \{1,\dots,n\}$ is a minimizer of $h_l\left(\Omega \setminus \bigcup_{j \neq i_1, \dots, i_l} {E_j}\right)$. 
Moreover, it is easy to see from the definitions that any $(k-1)$-adjusted Cheeger $k$-tuple is, in fact, $k$-adjusted. 

\medskip
Let us remark that these adjustment conditions are not necessary for the problem defined in \eqref{def:H_k}, since in this case the Cheeger constant of \textit{each} component $E_i$ of a minimizer $(E_1,\dots,E_k)$ contributes to the value of $H_k(\Omega)$, while in our problem $h_k(\Omega)$ is defined via the maximal Cheeger constant only. 

As for the existence of adjusted Cheeger $k$-tuples, our main result is Theorem \ref{thm:main1}, where we prove that, for any $k \in \mathbb{N}$, and for any $n \in \{1,\dots,k\}$, there exists a $n$-adjusted Cheeger $k$-tuple.

The advantage of working with adjusted Cheeger $k$-tuples is the fact that they satisfy the usual regularity properties of perimeter-minimizing sets, which are stated in detail in Theorem \ref{thm:interiorregularity}. In particular, each set of any $1$-adjusted Cheeger $k$-tuple can be approximated from the inside by a sequence of smooth sets in a suitable sense (see Proposition \ref{prop:approximation}). This result plays a crucial role for the characterization of $h_k(\Omega)$ as a limit of spectral minimal partitions, see Theorem \ref{thm:1}.

Qualitative properties of Cheeger $k$-tuples were also investigated in \cite{Parini}, although with a few imprecisions. In particular, \cite[Theorem 3.9]{Parini} is not correct, since the proof uses the erroneous fact that the perimeter of both Cheeger sets increases under volume-preserving deformations, which is in general not true. We correct that statement by our Proposition \ref{prop1}. 

\medskip
The paper is structured as follows. After proving the existence of adjusted Cheeger $k$-tuples for $h_k(\Omega)$ by means of inductive arguments, in Section \ref{sec:regularity} we investigate their regularity properties, and in Section \ref{sec:properties} we study further qualitative properties. In Section \ref{sec:spectral_minimal_partitions} we consider the spectral minimal partition problem for the $p$-Laplacian and study its limit as $p \to 1$. The last section is devoted to the computation of $h_2(\Omega)$ when $\Omega$ is a disc or an annulus.

\section[Existence of adjusted Cheeger k-tuples]{Existence of adjusted Cheeger $k$-tuples}\label{sec:existence}

\begin{thm}\label{thm:main1}
	Let $\Omega \subset \mathbb{R}^N$ be a bounded measurable set with positive Lebesgue measure.
	Then, for any $k \in \mathbb{N}$ and $n \in \{1,\dots,k\}$, there exists a $n$-adjusted Cheeger $k$-tuple.
\end{thm}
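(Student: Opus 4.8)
The plan is to prove the existence of an $n$-adjusted Cheeger $k$-tuple by a \emph{double induction}: an outer induction on $n$ (the adjustment level) and, within each step, a variational/compactness argument that builds a suitable minimizer. For the base case $n=1$, I would start from the existence of \emph{some} Cheeger $k$-tuple, which is guaranteed by \cite[Theorem~3.1]{Parini}. The natural idea is to select, among all Cheeger $k$-tuples, one that is extremal with respect to an auxiliary quantity — for instance, one maximizing the total measure $\sum_i |E_i|$, or more robustly, one minimizing the sum $\sum_i h_1(E_i)$ (or maximizing $\sum_i |E_i|$ subject to the hard constraint $\max_i h_1(E_i) = h_k(\Omega)$). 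The point of such a selection is that if some $E_{i_0}$ failed to be a Cheeger set of $\Omega \setminus \bigcup_{j \neq i_0} E_j$, then by definition there would be a strictly better competitor $\widetilde{E}_{i_0} \subset \Omega \setminus \bigcup_{j \neq i_0} E_j$ with $h_1(\widetilde{E}_{i_0}) < h_1(E_{i_0}) \leq h_k(\Omega)$; replacing $E_{i_0}$ by $\widetilde{E}_{i_0}$ keeps the $k$-tuple admissible, does not increase the maximum, and strictly improves the auxiliary functional, contradicting extremality. Making this rigorous requires showing the auxiliary extremal problem is itself solvable, which I would do via the direct method: take a maximizing/minimizing sequence of Cheeger $k$-tuples, use $BV$-compactness (the perimeters are controlled since $h_1(E_i^{(m)}) = P(E_i^{(m)})/|E_i^{(m)}|$ stay bounded and $\Omega$ is bounded), pass to the limit using lower semicontinuity of perimeter and continuity of measure, and verify the limit $k$-tuple is still admissible and extremal.

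For the inductive step, assume the claim holds for $n-1$ and fix $k$. By the equivalent reformulation in the excerpt, being $n$-adjusted means that for every $l \in \{1,\dots,n\}$ and every index subset of size $l$, the corresponding sub-tuple minimizes $h_l$ of the appropriate reduced domain. I would produce the $n$-adjusted $k$-tuple again by a selection principle layered on top of the induction hypothesis: work within the class of $(n-1)$-adjusted Cheeger $k$-tuples (nonempty by hypothesis) and pick one extremizing a further auxiliary functional designed to force the level-$n$ conditions. The improvement argument is the same in spirit: if some $n$-tuple $(E_{i_1},\dots,E_{i_n})$ failed to be a minimizer of $h_n(\Omega \setminus \bigcup_{j \neq i_1,\dots,i_n} E_j)$, one could replace those $n$ sets by a genuine $h_n$-minimizer of the reduced domain — which exists, again by \cite[Theorem~3.1]{Parini} applied to that domain, and which can be taken $(n-1)$-adjusted by the induction hypothesis applied \emph{there} — thereby improving the auxiliary functional while preserving both admissibility and the lower-level adjustment conditions, a contradiction. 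The final remark that any $(k-1)$-adjusted $k$-tuple is automatically $k$-adjusted handles the top level for free, so the induction need only run to $n = \min(n,k-1)$.

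The delicate point, and what I expect to be the main obstacle, is \textbf{consistency of the nested replacements}: when I improve one size-$l$ sub-tuple, I must ensure I do not destroy the adjustment conditions already secured for other overlapping sub-tuples, nor the global constraint $\max_i h_1(E_i) = h_k(\Omega)$. The whole scheme hinges on choosing a single scalar auxiliary functional (or a well-ordered hierarchy of them, e.g. a lexicographic tower of sums $\sum h_1(E_i)$, then $\sum_{\text{pairs}} h_2(E_{i}\cup E_{j})$, and so on) that is \emph{simultaneously} monotone under \emph{every} admissible local improvement, so that any violation of an adjustment condition yields a strict decrease and hence a contradiction with the chosen extremizer. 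Verifying that such a functional exists, that its sublevel class is compact in $BV$, and that the local replacements genuinely decrease it without side effects is the crux. A subtlety to check carefully is that an $h_l$-minimizer on a reduced (possibly non-open, merely measurable) domain still enjoys enough compactness and that the reduced domains have positive measure throughout; here I would lean on the boundedness of $\Omega$, the uniform perimeter bounds coming from the fixed value $h_k(\Omega)$, and standard lower semicontinuity to keep the direct method running at every level.
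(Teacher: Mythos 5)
Your overall strategy --- induction on the adjustment level combined with a ``replace a violating sub-tuple by a genuine minimizer of the reduced domain'' step --- is the same improvement mechanism the paper uses, but the way you propose to \emph{terminate} the improvement process is genuinely different and is where the gap lies. The paper runs a purely finite combinatorial algorithm: it replaces violating sub-tuples one at a time and observes that each replacement strictly decreases the integer monovariant $\#\{j : P(E_j)/|E_j| = h_k(\Omega)\}$ (which is always at least $1$), so the procedure stops after finitely many rounds with no auxiliary variational problem to solve. You instead want to select, by the direct method, an extremizer of a scalar auxiliary functional over the class of Cheeger $k$-tuples (or of $(n{-}1)$-adjusted ones) and derive a contradiction from any violation. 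This requires two things you do not establish, and your specific candidates fail both. First, the functional must \emph{strictly} decrease under every admissible replacement: $\sum_i h_1(E_i)$ does not, because a violation of $1$-adjustment can occur with $h_1(E_i)=h_1\bigl(\Omega\setminus\bigcup_{j\neq i}E_j\bigr)<P(E_i)/|E_i|$ (a non-calibrable $E_i$), in which case replacing $E_i$ by a Cheeger set of the reduced domain leaves $\sum_i h_1(E_i)$ unchanged; maximizing $\sum_i|E_i|$ fares no better, since the Cheeger set of the reduced domain may well be smaller than $E_i$. (For $n=1$ the functional $\sum_i P(E_i)/|E_i|$ would work, but you do not identify it, and at level $n\geq 2$ even this fails: replacing an $n$-sub-tuple by a minimizer of $h_n$ of the reduced domain lowers the \emph{maximum} of the ratios in that sub-tuple but can increase their \emph{sum}.)

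Second, and more seriously, your inductive step needs the class over which you extremize to be closed under $L^1$ limits. The class of all Cheeger $k$-tuples is closed (lower semicontinuity of perimeter plus the fact that the max cannot drop below $h_k(\Omega)$), but the class of $(n{-}1)$-adjusted tuples is not obviously so: the condition $h_l\bigl(\Omega\setminus\bigcup_{j\neq i_1,\dots,i_l}E_j\bigr)=\max_m P(E_{i_m})/|E_{i_m}|$ involves the Cheeger constant of a reduced domain, and $E\mapsto h_l(E)$ is only \emph{lower} semicontinuous with respect to $L^1$ convergence of sets, not upper semicontinuous (removing a dense thin set of vanishing measure can raise the Cheeger constant drastically while barely moving the domain in $L^1$). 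So the equality defining adjustment need not pass to the limit, and the direct method stalls exactly at the constraint you need to preserve. You flag both issues yourself as ``the crux,'' but a crux left unverified --- where the natural candidates demonstrably fail --- is a gap, not a proof. The lesson from the paper's argument is that no compactness or auxiliary minimization is needed at all: the existence results you already have (minimizers of $h_l$ on arbitrary reduced domains, plus the induction hypotheses) feed a finite replacement loop, and the strict decrease of the count of sets sitting at the critical value $h_k(\Omega)$ is what guarantees termination and, at termination, all the adjustment identities simultaneously.
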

\begin{proof}
Let us denote, for simplicity,
$$
J(E) = \frac{P(E)}{|E|}.
$$
The proof is combinatorial and based on inductive arguments. 
Let us outline its scheme:
\begin{enumerate}[label={\rm(\Roman*)}]
	\item\label{thm:main1:I} 
	Show the existence of a $1$-adjusted Cheeger $k$-tuple for any $k \geq 1$.
	This is the base of induction with respect to the adjustment order. 
	\item\label{thm:main1:II} 
	Fix $n \geq 2$ and suppose that for any $n' < n$ there exists a $n'$-adjusted Cheeger $k$-tuple for any $k \geq n'$. 
	This is the induction hypothesis. 
	Then, prove the existence of a $n$-adjusted Cheeger $k$-tuple for any $k \geq n$. 
	This is the inductive step.
\end{enumerate}

In the steps \ref{thm:main1:I} and \ref{thm:main1:II} we again apply the induction as follows:
\begin{enumerate}[label={\rm(\arabic*)}]
	\item\label{thm:main1:1}  Fix $n \geq 1$ and show the existence of a $n$-adjusted Cheeger $(n+1)$-tuple. (Note that the existence of a $n$-adjusted Cheeger $n$-tuple is obvious: if $n=1$, then it is trivial; if $n \geq 2$, then it simply follows from the existence of a $(n-1)$-adjusted Cheeger $n$-tuple, which we know by the induction hypothesis of \ref{thm:main1:II}).
	This is the base of induction with respect to the index of the higher Cheeger constant.
	\item\label{thm:main1:2}  Fix $k \geq n+1$ and suppose that for any $k' \in \{n,\dots,k-1\}$ there exists a $n$-adjusted Cheeger $k'$-tuple.
	This is the induction hypothesis.
	Then, prove the existence of a $n$-adjusted Cheeger $k$-tuple. 
	This is the inductive step.
\end{enumerate}

Let us emphasize that the induction hypotheses are assumed to be satisfied for arbitrary bounded measurable $\Omega$ with positive Lebesgue measure.

\medskip
We now turn to details.

\ref{thm:main1:I} First we consider the case of $1$-adjusted Cheeger $k$-tuples.

\ref{thm:main1:1} Let $k=2$, and let $(C_1,C_2)$ be a minimizer of $h_2(\Omega)$. Without loss of generality, we can assume that $J(C_1)=h_2(\Omega)$. 
Observe that this does not mean, in general, that $h_1(\Omega \setminus C_2) = h_1(C_1) = J(C_1)$.
We will distinguish two cases:
\begin{enumerate}[label={\rm(\alph*)}]
	\item\label{thm:main1:1:a} \underline{$J(C_2)<h_2(\Omega)$}. 
	In this case, $C_1$ must be a Cheeger set of $\Omega \setminus C_2$. 
	Then, let $C_2'$ be a Cheeger set of $\Omega \setminus C_1$. It evidently satisfies $J(C_2')\leq J(C_2) < h_2(\Omega)$. By definition, $C_1$ is automatically a Cheeger set for $\Omega \setminus C_2'$. Hence, $(C_1, C_2')$ is a $1$-adjusted Cheeger couple of $\Omega$.
	\item \underline{$J(C_2)=h_2(\Omega)$}. As before, consider a Cheeger set $C_2'$ of $\Omega \setminus C_1$: if $J(C_2')<h_2(\Omega)$, we fall into the previous case. Otherwise, either $C_1$ is a Cheeger set of $\Omega \setminus C_2'$, and we are done; or this is not the case, and we take $C_1'$ to be a Cheeger set of $\Omega \setminus C_2'$. This last set will satisfy $J(C_1')<h_2(\Omega)$, and we fall again into the case \ref{thm:main1:1:a}. 
\end{enumerate}

\ref{thm:main1:2} Fix a natural number $k>2$ and suppose that the claim is true for every $k'<k$. We will show that it is true also for $k$. Let $(C_1, \dots,C_k)$ be a minimizer of $h_k(\Omega)$. 
We proceed as follows. Set $(E_1,\dots,E_k)=(C_1,\dots,C_k)$. 
\begin{enumerate}[label={\rm(\alph*)}]
	\item\label{thm:main1:2:a}
	Without loss of generality, we can suppose that $J(E_1) = \dots = J(E_m) = h_k(\Omega)$ for some $m \in \{1, \dots, k\}$, and $J(E_{m+1}), \dots, J(E_k) < h_k(\Omega)$ whenever $m<k$. 
	\item 
	If $m<k$, then, by the induction hypothesis, there exists a $1$-adjusted Cheeger $(k-m)$-tuple $(C_{m+1}', \dots, C_k')$ corresponding to $h_{k-m}(\Omega \setminus \bigcup_{j=1}^{m} E_j)$. 
	By definition, we will have $J(C_i') < h_k(\Omega)$ for every $i \in \{m+1,\dots,k\}$. Consider the new $k$-tuple 
	$$
	(E_1,\dots,E_m,E_{m+1},\dots,E_k)=(E_1,\dots,E_m,C_{m+1}',\dots,C_k').
	$$
	It is not hard to see that $E_i$ is a Cheeger set of $\Omega \setminus \bigcup_{j \neq i} E_j$ for any $i \in \{m+1,\dots,k\}$.
	\item 
	Set $i=1$.
	\item\label{thm:main1:2:d} If $i\leq m$ and $E_i$ is a Cheeger set of $\Omega \setminus \bigcup_{j \neq i} E_j$, then put $i=i+1$ and repeat step \ref{thm:main1:2:d}. 
	\item If $i\leq m$ and $E_i$ is not a Cheeger set of $\Omega \setminus \bigcup_{j \neq i} E_j$, then let $C'_i$ be such a Cheeger set, and consider the new $k$-tuple 
	$$
	(E_1,\dots,E_{i-1},E_i,E_{i+1},\dots,E_k)
	=
	(E_1,\dots,E_{i-1},C_i',E_{i+1},\dots,E_k).
	$$
	Observe that $J(E_i)<h_k(\Omega)$ and hence the number of sets $E_j$ such that $J(E_j) = h_k(\Omega)$ has been decreased by one unit. 	
	Go to step \ref{thm:main1:2:a}.
\end{enumerate}
At the end of this procedure, after a finite number of steps, we will obtain a $1$-adjusted Cheeger $k$-tuple of $\Omega$.

\ref{thm:main1:II} Fix some natural $n \geq 2$ and suppose that for any $n' < n$ and $k \geq n'$ there exists a $n'$-adjusted Cheeger $k$-tuple. 
Let us show now the existence of a $n$-adjusted Cheeger $k$-tuple for any $k \geq n$. 
Observe that the existence of a $n$-adjusted Cheeger $n$-tuple trivially follows from the existence of a $(n-1)$-adjusted Cheeger $n$-tuple. Therefore, it is enough to suppose that $k \geq n+1$.

\ref{thm:main1:1} First we show the existence of a $n$-adjusted Cheeger $(n+1)$-tuple. 
Let $(C_1,\dots,C_{n+1})$ be a minimizer of $h_{n+1}(\Omega)$. 
By the induction hypothesis with respect to the adjustment order, we can assume that $(C_1,\dots,C_{n+1})$ is a $(n-1)$-adjusted Cheeger $(n+1)$-tuple corresponding to $h_{n+1}(\Omega)$. 
Suppose that $(C_1,\dots,C_{n+1})$ is not $n$-adjusted. 
Therefore, there exists a $n$-subtuple of $(C_1,\dots,C_{n+1})$, say, $(C_1,\dots,C_{n})$, such that
$$
h_{n}\left(\Omega \setminus C_{n+1}\right) < \max\{J(C_1), \dots, J(C_n)\} \leq h_{n+1}(\Omega).
$$
Let us substitute $(C_1,\dots,C_n)$ by a $n$-adjusted Cheeger $n$-tuple $(C_1',\dots,C_n')$ corresponding to $h_n(\Omega\setminus C_{n+1})$. 
Then we get $J(C_1'),\dots,J(C_n') < J(C_{n+1}) = h_{n+1}(\Omega)$. 

Let us show that $(C_1',\dots,C_n',C_{n+1})$ is a $n$-adjusted Cheeger $(n+1)$-tuple of $\Omega$. 
Omit, for simplicity, the superscript $'$, and suppose, by contradiction, that there exists a $l$-tuple $(C_{i_1},\dots,C_{i_l})$ with some $l \in \{1,\dots,n\}$ such that 
$$
h_{l}\biggl(\Omega \setminus \bigcup_{j \neq i_1,\dots,i_l} C_{j} \biggr) 
< \max\{J(C_{i_1}),\dots, J(C_{i_l})\} \leq h_{n+1}(\Omega).
$$
Recalling that $(C_1,\dots,C_n)$ is a $n$-adjusted Cheeger $n$-tuple of $\Omega \setminus C_{n+1}$, we see that the $l$-tuple $(C_{i_1},\dots,C_{i_l})$ must necessarily contain $C_{n+1}$. 
Therefore, if $(E_{i_1},\dots,E_{i_l})$ is an arbitrary Cheeger $l$-tuple of the set $\Omega \setminus \bigcup_{j \neq i_1,\dots,i_l} C_{j}$, we get a contradiction to the definition of $h_{n+1}(\Omega)$, since \[J(E_{i_1}),\dots, J(E_{i_l}) < h_{n+1}(\Omega),\] and $J(C_i) < h_{n+1}(\Omega)$ for all $i \in \{1,\dots,n+1\}$ satisfying $i \neq i_j$ with $j \in \{1,\dots,l\}$.

\ref{thm:main1:2} Fix an arbitrary $k \geq n+1$ and suppose that for any $k' \in \{n,\dots,k-1\}$ there exists a $n$-adjusted Cheeger $k'$-tuple. Let us prove the existence of a $n$-adjusted Cheeger $k$-tuple. 

If $k=n+1$, this is easy to prove, so we can suppose that $k \geq n+2$. Let $(C_1,\dots,C_{k})$ be a $(n-1)$-adjusted Cheeger $k$-tuple of $\Omega$ which exists by the induction hypothesis with respect to the adjustment order stated in \ref{thm:main1:II}. 
Suppose that $(C_1,\dots,C_{k})$ is not $n$-adjusted, that is, there exits, say, $(C_1,\dots,C_n)$, such that 
$$
h_{n}\biggl(\Omega \setminus \bigcup_{j = n+1}^k C_{j}\biggr) < \max\{J(C_1), \dots, J(C_n)\} \leq h_{k}(\Omega).
$$
Let $(C_1',\dots,C_n')$ be a corresponding $n$-adjusted Cheeger $n$-tuple of $\Omega \setminus \bigcup_{j = n+1}^k C_{j}$. 
Note that \[J(C_1'),\dots,J(C_n') < h_k(\Omega),\] that is, there are at least $n$ elements of $(J(C_1'), \dots, J(C_n'), J(C_{n+1}), \dots, J(C_k))$ which are strictly smaller than $h_k(\Omega)$. 
We omit, for simplicity, the superscript $'$, and proceed as follows. 
Set $(E_1,\dots,E_k)=(C_1,\dots,C_k)$.
\begin{enumerate}[label={\rm(\alph*)}]
	\item\label{thm:main1:3:a}
	Without loss of generality, we can suppose that $J(E_1),\dots, J(E_m) < h_k(\Omega)$ for some $m \in \{n, \dots, k-1\}$, and $J(E_{m+1}) = \dots = J(E_k) = h_k(\Omega)$.
	\item\label{thm:main1:3:b} By the induction hypothesis, there exists a $n$-adjusted Cheeger $m$-tuple $(C_{1}', \dots, C_m')$ corresponding to $h_{m}(\Omega \setminus \bigcup_{j=m+1}^{k} E_j)$. 
	By definition, we will have $J(C_i') < h_k(\Omega)$ for every $i \in \{1,\dots,m\}$. Consider the new $k$-tuple 
	$$
	(E_1,\dots,E_m,E_{m+1},\dots,E_k)=(C_1',\dots,C_m',E_{m+1},\dots,E_k).
	$$
	It is not hard to see that any $l$-tuple $(E_{i_1},\dots,E_{i_l}) \subset (E_{1},\dots,E_m)$ is a $l$-adjusted Cheeger $l$-tuple of $\Omega \setminus \bigcup_{j \neq i_1,\dots,i_l} E_j$. 
	\item Assume that there exists $l \in \{1,\dots,n\}$ and a $l$-tuple $(E_{i_1},\dots,E_{i_l}) \subset (E_{1},\dots,E_k)$ such that $(E_{i_1},\dots,E_{i_l})$ is not a minimizer of $h_l\left(\Omega \setminus \bigcup_{j \neq i_1,\dots,i_l} E_j\right)$. Note that $(E_{i_1},\dots,E_{i_l}) \not\subset (E_{1},\dots,E_m)$, as it follows from step \ref{thm:main1:3:b}. Therefore, there exists $j \in \{1,\dots,l\}$ such that $E_{i_j} \in (E_{m+1},\dots,E_k)$, i.e., $J(E_{i_j}) = h_k(\Omega)$. 
	Let now $(C_{i_1}',\dots,C_{i_l}')$ be any minimizer of $h_l\left(\Omega \setminus \bigcup_{j \neq i_1,\dots,i_l} E_j\right)$. 
	Then we observe that $J(C_{i_j}') < h_k(\Omega)$. 
	Consider a new $k$-tuple $(E_1,\dots,E_k)$ obtained by replacing $(E_{i_1},\dots,E_{i_l})$ with $(C_{i_1}',\dots,C_{i_l}')$. 
	Hence, the number of sets $E_j$ such that $J(E_j) = h_k(\Omega)$ has been decreased at least by one unit. Go to step \ref{thm:main1:3:a}.
\end{enumerate}

At the end of this procedure, after a finite number of steps, we will obtain a $n$-adjusted Cheeger $k$-tuple of $\Omega$.
\end{proof}

\section[Regularity of adjusted Cheeger k-tuples]{Regularity of adjusted Cheeger $k$-tuples}\label{sec:regularity}

Let $(E_1,\dots,E_k) \in \mathcal{E}_k$ be a $1$-adjusted Cheeger $k$-tuple of $\Omega$. In this section we prove some regularity properties of $E_i$ by adapting the results obtained by Caroccia in \cite{caroccia}. 
Throughout this section we assume that $\Omega$ is a bounded open set.

We denote by $\mathcal{H}^{N-1}$ the $(N-1)$-dimensional Hausdorff measure. Given two Borel sets $E$ and $F$, we will write $E \approx F$ whenever $\mathcal{H}^{N-1}(E \bigtriangleup F)=0$, where $E \bigtriangleup F$ stands for the symmetric difference between $E$ and $F$. Given a Borel set $A$ and a set of finite perimeter $E$, we denote by $P(E;A)$ the perimeter of $E$ measured with respect to $A$. Recall also that, for the sake of simplicity, we write $P(E):=P(E;\R^N)$. We denote by $\partial^* E$ the \emph{reduced boundary} of $E$ (see \cite[p.\ 167]{maggi}). We recall that
\[ 
P(E;A) = \mathcal{H}^{N-1}(\partial^* E \cap A) \leq \mathcal{H}^{N-1}(\partial E \cap A).
\]
As an immediate consequence, if we denote the complement of a set $A$ in $\R^N$ by $A^c$, we have the basic relation
\[ P(E) = P(E;A) + P(E;A^c).\]
For $\alpha \in [0,1]$, we denote by $E^{(\alpha)}$ the set of points of Lebesgue density $s$, namely, 
\[E^{(\alpha)} = \left\{ x \in \R^N:~ \lim_{r \to 0^+} \frac{|E \cap B_r(x)|}{|B_r(x)|} = \alpha \right\}. \]
The set $E^{(1)}$ is called \emph{essential interior} of $E$, the set $E^{(0)}$ is the \emph{essential exterior} of $E$, and $\partial^e E = \R^N \setminus (E^{(0)} \cup E^{(1)})$ is the \emph{essential boundary} of $E$. By Federer's Theorem (cf.\ \cite[Theorem 16.2]{maggi}), we have
\[ 
\partial^* E \subset E^{(1/2)} \subset \partial^e E 
\quad \text{and} \quad 
\mathcal{H}^{N-1}(\partial^e E \setminus \partial^* E) = 0.
\]
This implies in particular that, for every set of finite perimeter $E$,
\begin{equation} \label{decompositionfederer} \R^N \approx E^{(0)}\cup E^{(1)} \cup \partial^* E. \end{equation}

\begin{lem} \label{maggidecomposition}
	Let $E$ and $F$ be sets of finite perimeter and $A$ be a Borel set. Then
	$$
	P(E \setminus F;A) + P(F \setminus E;A) \leq P(E;A)+P(F;A).
	$$
\end{lem}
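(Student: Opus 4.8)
The plan is to reduce the inequality to a comparison of $\mathcal{H}^{N-1}$-measures of reduced boundaries intersected with $A$, using the identity $P(G;A) = \mathcal{H}^{N-1}(\partial^* G \cap A)$ recalled above. First I would decompose each of $\partial^* E \cap A$ and $\partial^* F \cap A$ according to the Federer decomposition \eqref{decompositionfederer} applied to the \emph{other} set: writing $\R^N \approx F^{(0)} \cup F^{(1)} \cup \partial^* F$, the set $\partial^* E \cap A$ splits, up to $\mathcal{H}^{N-1}$-null sets and into essentially disjoint pieces, as the union of $\partial^* E \cap F^{(0)} \cap A$, $\partial^* E \cap F^{(1)} \cap A$ and $\partial^* E \cap \partial^* F \cap A$, and symmetrically for $\partial^* F \cap A$. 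This turns both sides of the claimed inequality into sums of Hausdorff measures of explicit pieces.

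The second, and central, step is to express $\partial^*(E\setminus F)$ and $\partial^*(F\setminus E)$ in terms of these same pieces. Writing $E \setminus F = E \cap F^c$ and invoking the measure-theoretic characterization of the reduced boundary of an intersection (equivalently, of a set difference) of two sets of finite perimeter in terms of $\partial^* E$, $\partial^* F$, the essential interiors and exteriors, and the coincidence of measure-theoretic normals (see \cite{maggi}), I would record
\[
\partial^*(E \setminus F) \approx (\partial^* E \cap F^{(0)}) \cup (E^{(1)} \cap \partial^* F) \cup J, \qquad \partial^*(F \setminus E) \approx (\partial^* F \cap E^{(0)}) \cup (F^{(1)} \cap \partial^* E) \cup J,
\]
where $J = \{x \in \partial^* E \cap \partial^* F : \nu_E(x) = -\nu_F(x)\}$ is the set of common boundary points at which the two outer normals are opposite. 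Since on $\partial^* E \cap \partial^* F$ one has $\nu_E = \pm\nu_F$ at $\mathcal{H}^{N-1}$-a.e.\ point, $J$ is (up to null sets) a subset of $\partial^* E \cap \partial^* F$, and the three pieces on each right-hand side are mutually essentially disjoint.

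Intersecting with $A$ and adding the two identities, the terms involving $\partial^* E \cap F^{(0)}$, $\partial^* F \cap E^{(0)}$, $\partial^* E \cap F^{(1)}$ and $\partial^* F \cap E^{(1)}$ reproduce exactly the corresponding terms in the decomposition of $P(E;A) + P(F;A)$, while the shared boundary contributes $2\mathcal{H}^{N-1}(J \cap A)$ on the left against $2\mathcal{H}^{N-1}(\partial^* E \cap \partial^* F \cap A)$ on the right. Hence
\[
P(E;A) + P(F;A) - P(E\setminus F;A) - P(F\setminus E;A) = 2\bigl(\mathcal{H}^{N-1}(\partial^* E \cap \partial^* F \cap A) - \mathcal{H}^{N-1}(J \cap A)\bigr) \geq 0,
\]
because $J \cap A \subseteq \partial^* E \cap \partial^* F \cap A$; in fact the surplus equals $2\mathcal{H}^{N-1}(\{\nu_E = \nu_F\} \cap A)$, the measure of the coincidence-normal part of the common boundary.

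I expect the main obstacle to be the careful bookkeeping in the second step: correctly identifying the reduced boundary of the set difference (including its behaviour under complementation, where $\nu_{F^c} = -\nu_F$ and $(F^c)^{(1)} = F^{(0)}$) and verifying that the various pieces are genuinely essentially disjoint so that their Hausdorff measures add, all of which rests on the fine structure theory for sets of finite perimeter rather than on any estimate particular to this lemma. The remaining algebra is then purely a matter of matching terms.
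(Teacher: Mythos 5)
Your proof is correct and follows essentially the same route as the paper: both apply the Federer-type formula for the reduced boundary of an intersection (\cite[Theorem 16.3]{maggi}) to $E\setminus F$ and $F\setminus E$, sum the resulting decompositions, and observe that the only discrepancy with the Federer decomposition of $P(E;A)+P(F;A)$ is the replacement of $2\mathcal{H}^{N-1}(\{\nu_E=-\nu_F\}\cap A)$ by the larger quantity $2\mathcal{H}^{N-1}(\partial^*E\cap\partial^*F\cap A)$. Your additional remark that the surplus equals exactly $2\mathcal{H}^{N-1}(\{\nu_E=\nu_F\}\cap A)$ is a correct refinement, though not needed for the inequality.
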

\begin{proof}
	Define the set 
	\[\{\nu_E = -\nu_F\}:= \{ x \in \partial^* E \cap \partial^* F:~ \nu_E(x)=-\nu_F(x)\}.\] 
	By \cite[Theorem 16.3]{maggi} we have
	\begin{align*} P(E \setminus F;A) + P(F \setminus E;A) & = P(E;F^{(0)}\cap A) + P(F;E^{(1)}\cap A) + P(F;E^{(0)}\cap A) \\ & + P(E;F^{(1)}\cap A) + 2\mathcal{H}^{N-1}(\{\nu_E = -\nu_F\}\cap A) \\ & \leq P(E;F^{(0)}\cap A) + P(F;E^{(1)}\cap A) + P(F;E^{(0)}\cap A) \\ & + P(E;F^{(1)}\cap A) + 2\mathcal{H}^{N-1}(\partial^* E \cap \partial^* F \cap A) \\ & \leq P(E;F\cap A) + P(F;E\cap A), \end{align*}
	where we used relation \eqref{decompositionfederer}.
\end{proof}

\begin{lem} \label{strangedecomposition}
	Let $E$, $F$ and $L$ be sets of finite perimeter and $|E \cap F|=0$. Then
	\[ \partial^* (E \cap L) \cap \partial^*(F \cap L) \approx \partial^*E \cap \partial^* F \cap L^{(1)}.\]
\end{lem}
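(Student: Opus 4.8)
The plan is to apply the boundary decomposition for intersections of sets of finite perimeter from \cite[Theorem 16.3]{maggi} to both $E \cap L$ and $F \cap L$, and then intersect the two resulting expressions. Writing $\{\nu_X = \nu_Y\} := \{x \in \partial^* X \cap \partial^* Y : \nu_X(x) = \nu_Y(x)\}$, that theorem gives
\begin{align*}
\partial^*(E \cap L) &\approx (L^{(1)} \cap \partial^* E) \cup (E^{(1)} \cap \partial^* L) \cup \{\nu_E = \nu_L\}, \\
\partial^*(F \cap L) &\approx (L^{(1)} \cap \partial^* F) \cup (F^{(1)} \cap \partial^* L) \cup \{\nu_F = \nu_L\}.
\end{align*}
Intersecting the two right-hand sides produces nine terms, of which exactly one, $L^{(1)} \cap \partial^* E \cap \partial^* F$, is the quantity we want; the task then reduces to showing that every other term is either empty or $\mathcal{H}^{N-1}$-negligible.

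Before the bookkeeping I would extract the two structural consequences of the hypothesis $|E \cap F| = 0$. First, a pointwise density argument: if $x \in E^{(1)}$, then $|F \cap B_r(x)| \leq |B_r(x)| - |E \cap B_r(x)|$ because $|E \cap F| = 0$, so the $F$-density at $x$ tends to $0$ and hence $E^{(1)} \subset F^{(0)}$; symmetrically $F^{(1)} \subset E^{(0)}$. Second, $E \cap F$ is Lebesgue-equivalent to the empty set, so $P(E \cap F) = 0$ and $\mathcal{H}^{N-1}(\partial^*(E \cap F)) = 0$; feeding this into the same intersection formula applied to the pair $(E,F)$ forces each piece of the decomposition of $\partial^*(E \cap F)$ to be null, in particular $\mathcal{H}^{N-1}(\{\nu_E = \nu_F\}) = 0$.

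With these in hand the cross terms collapse. Any term pairing $\partial^* L$ (or $\{\nu_E = \nu_L\}$, $\{\nu_F = \nu_L\}$, both contained in $\partial^* L$) against $L^{(1)}$ is empty, because $\partial^* L \subset L^{(1/2)}$ is disjoint from $L^{(1)}$ by Federer's inclusions $\partial^* X \subset X^{(1/2)}$. The term $E^{(1)} \cap F^{(1)} \cap \partial^* L$ is empty since $E^{(1)} \cap F^{(1)} \subset F^{(0)} \cap F^{(1)} = \emptyset$, and the mixed terms $E^{(1)} \cap \{\nu_F = \nu_L\}$ and $F^{(1)} \cap \{\nu_E = \nu_L\}$ vanish because $E^{(1)} \subset F^{(0)}$ is disjoint from $\partial^* F \subset F^{(1/2)}$ (and symmetrically). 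The only genuinely delicate term is $\{\nu_E = \nu_L\} \cap \{\nu_F = \nu_L\}$: there $\nu_E = \nu_L = \nu_F$, so this set is contained in $\{\nu_E = \nu_F\}$, which is $\mathcal{H}^{N-1}$-null by the second consequence above. This last point—recognizing the same-normal coincidence set as a (null) piece of the reduced boundary of the essentially empty set $E \cap F$—is the crux of the argument and the only place where $|E \cap F| = 0$ is used beyond forcing disjoint densities; the remaining identifications are routine applications of the Federer inclusions.
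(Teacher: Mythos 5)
Your proof is correct and follows essentially the same strategy as the paper: apply \cite[Theorem 16.3]{maggi} to both $E \cap L$ and $F \cap L$, intersect the two decompositions, and eliminate the cross terms using the Federer inclusions $\partial^* X \subset X^{(1/2)}$ together with the density consequences $E^{(1)} \subset F^{(0)}$ and $F^{(1)} \subset E^{(0)}$ of $|E \cap F| = 0$. The only point of divergence is the treatment of the same-normal term $\{\nu_E = \nu_L\} \cap \{\nu_F = \nu_L\} \subset \{\nu_E = \nu_F\}$: the paper shows this set is actually \emph{empty}, because a point of $\partial^* E \cap \partial^* F$ has density $\tfrac{1}{2}$ for both sets and $|E \cap F| = 0$ then forces $\nu_E(x) = -\nu_F(x)$ (citing \cite[Exercise 12.9]{maggi}), whereas you show it is merely $\mathcal{H}^{N-1}$-null by applying the same intersection formula to the pair $(E,F)$ and using $P(E \cap F) = 0$. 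Both arguments suffice, since the claimed identity holds only up to $\mathcal{H}^{N-1}$-equivalence; your variant trades the pointwise blow-up argument for one extra invocation of Theorem 16.3, which is a perfectly acceptable and arguably more self-contained route.
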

\begin{proof}
	By \cite[Theorem 16.3]{maggi} we have
	\begin{align*} 
	\partial^* (E \cap L) \cap \partial^*(F \cap L) & \approx [(L^{(1)} \cap \partial^* E) \cup (E^{(1)} \cap \partial^* L) \cup \{\nu_L=\nu_E\} ] \\ & \cap [(L^{(1)} \cap \partial^* F) \cup (F^{(1)} \cap \partial^* L) \cup \{\nu_L=\nu_F\} ].
	\end{align*}
	Using the fact that $\{\nu_L=\nu_E\}$ and $\{\nu_L=\nu_F\}$ are subsets of $\partial^* L \cap \partial^* E$ and $\partial^* L \cap \partial^* F$ respectively, and that $D^{(1)} \cap\partial^* D=\emptyset$ for every set of finite perimeter $D$, we obtain
	\begin{align*} 
	\partial^* (E \cap L) \cap \partial^*(F \cap L) & \approx (L^{(1)} \cap \partial^* E \cap \partial^* F) \cup (E^{(1)} \cap F^{(1)} \cap \partial^* L) \\ & \cup (E^{(1)} \cap \partial^* L \cap \{\nu_L = \nu_F\}) \cup (F^{(1)} \cap \partial^* L \cap \{\nu_L = \nu_E\}) \\ & \cup (\{\nu_L = \nu_E\} \cap \{\nu_L = \nu_F\}).
	\end{align*}
	Since $|E \cap F|=0$, it holds $E^{(1)}\subset F^{(0)}$ and $F^{(1)}\subset E^{(0)}$, and therefore
	$$
	E^{(1)} \cap F^{(1)} \cap \partial^* L = \emptyset
	\quad \text{and} \quad 
	\mathcal{H}^{N-1}(E^{(1)} \cap \partial^* F) = \mathcal{H}^{N-1}(F^{(1)} \cap \partial^* E) = 0
	$$
	by Federer's Theorem. Finally, if $x \in \partial^* E \cap \partial^* F$, then it is a point of density $\frac{1}{2}$ for both $E$ and $F$. Since $|E \cap F|=0$, it follows that $\nu_E(x)=-\nu_F(x)$ (see \cite[Exercice 12.9]{maggi}), and therefore
	\[ \{\nu_L=\nu_F = \nu_E \} = \emptyset.\]
	Hence we obtain the claim.
\end{proof}

\begin{definition}
	A set $M \subset \Omega$ has distributional mean curvature bounded from above by $g \in L^1_{loc}(\Omega)$ in $\Omega$ if there exists $r_0 > 0$ such that, for every $B_r \subset \subset \Omega$ with $r \in (0,r_0)$, and for every $L \subset M$ with $M \setminus L \subset \subset B_r$, it holds
	$$ 
	P(M;B_r) \leq P(L;B_r) + \int_{M \setminus L} g(x)\,dx.
	$$
\end{definition}

\begin{definition}
	Let $\Lambda$, $r_0 >0$. We say that a set of finite perimeter $E$ is $(\Lambda,r_0)$-perimeter minimizing in $\Omega$ if, for every $B_r \subset \subset \Omega$ with $r \in (0,r_0)$, and for every set of finite perimeter $F$ with $E \bigtriangleup F \subset \subset B_r$, it holds
	\[ P(E;B_r) \leq P(F;B_r) + \Lambda |E \bigtriangleup F|.\]
\end{definition}

\begin{lem} \label{lemma1regularity}
	Let $(E_1,\dots,E_k) \in \mathcal{E}_k$ be a $1$-adjusted Cheeger $k$-tuple of $\Omega$, and define
	\[ 
	M_i := \bigcup_{j \neq i} E_j.
	\]
	Then, each $M_i$ has distributional mean curvature bounded from above by $h_k(\Omega)$, namely, for every $L \subset M_i$ with $M_i \setminus L \subset \subset B_r$, where $B_r$ is a ball of radius $r < \frac{1}{h_k(\Omega)}$, it holds
	\[ 
	P(M_i;B_r) \leq P(L;B_r) + h_k(\Omega) |M_i \setminus L|.
	\]
\end{lem}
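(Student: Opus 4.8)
The plan is to derive the mean-curvature bound for the union $M_i$ from the Cheeger minimality of its individual pieces, first localizing to $B_r$ and then controlling the interfaces between the pieces. Since $(E_1,\dots,E_k)$ is $1$-adjusted, for each $j\neq i$ the set $E_j$ is a Cheeger set of $\Omega\setminus M_j$, so $\frac{P(E_j)}{|E_j|}=h_1(\Omega\setminus M_j)$; moreover, as $(E_1,\dots,E_k)$ minimizes $h_k(\Omega)$, we have $\frac{P(E_j)}{|E_j|}\leq h_k(\Omega)$. The standard reformulation of the Cheeger property then says that $E_j$ minimizes $F\mapsto P(F)-\frac{P(E_j)}{|E_j|}\,|F|$ among all $F\subset\Omega\setminus M_j$, with minimal value $0$.

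First I would establish a per-piece estimate. Given $L\subset M_i$ with $M_i\setminus L\subset\subset B_r$, put $L_j:=L\cap E_j$ for $j\neq i$. Because the $E_j$ are mutually disjoint and $E_j\subset\Omega\setminus M_j$, each $L_j\subset E_j$ is an admissible competitor, and $E_j\bigtriangleup L_j=E_j\setminus L\subset M_i\setminus L\subset\subset B_r$. Testing the minimality of $E_j$ against $L_j$ gives $P(E_j)-P(L_j)\leq\frac{P(E_j)}{|E_j|}\,|E_j\setminus L_j|\leq h_k(\Omega)\,|E_j\setminus L_j|$; since the modification is compactly contained in $B_r$, the exterior perimeters coincide and
\[ P(E_j;B_r)-P(L_j;B_r)\leq h_k(\Omega)\,|E_j\setminus L_j|. \]

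It then remains to pass from the pieces to the union, which is the step I expect to require the most care. I would use the exact decomposition of the perimeter of a disjoint union of finite-perimeter sets, $P\bigl(\bigcup_{j\neq i}A_j;B_r\bigr)=\sum_{j\neq i}P(A_j;B_r)-2\sum_{\substack{j<\ell\\ j,\ell\neq i}}\mathcal{H}^{N-1}(\partial^*A_j\cap\partial^*A_\ell\cap B_r)$, which holds because triple intersections of reduced boundaries of mutually disjoint sets are $\mathcal{H}^{N-1}$-negligible (cf.\ \cite[Theorem 16.3]{maggi}). Applying this to $\{E_j\}$ and to $\{L_j\}$ and summing the per-piece estimates, the desired inequality $P(M_i;B_r)\leq P(L;B_r)+h_k(\Omega)|M_i\setminus L|$ reduces to the interface comparison $\mathcal{H}^{N-1}(\partial^*L_j\cap\partial^*L_\ell\cap B_r)\leq\mathcal{H}^{N-1}(\partial^*E_j\cap\partial^*E_\ell\cap B_r)$ for all $j\neq\ell$ in $\{1,\dots,k\}\setminus\{i\}$, i.e.\ to the statement that shrinking the pieces cannot create new interface.

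This interface comparison is supplied directly by Lemma \ref{strangedecomposition}: taking $E=E_j$, $F=E_\ell$ (disjoint) and intersecting with $L$, that lemma yields $\partial^*L_j\cap\partial^*L_\ell=\partial^*(E_j\cap L)\cap\partial^*(E_\ell\cap L)\approx\partial^*E_j\cap\partial^*E_\ell\cap L^{(1)}$, so the $\mathcal{H}^{N-1}$-measure of the left-hand side inside $B_r$ is at most that of $\partial^*E_j\cap\partial^*E_\ell\cap B_r$, as required. This completes the argument. Let me note that the restriction $r<1/h_k(\Omega)$ is not actually used: the bound holds for every $B_r\subset\subset\Omega$, and the smallness is retained only to conform to the definition of distributional mean curvature bounded from above.
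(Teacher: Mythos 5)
Your proof is correct and follows essentially the same route as the paper's: the same competitors $L_j = E_j \cap L$ (called $F_j$ there), the same per-piece comparison coming from $E_j$ being a Cheeger set of $\Omega\setminus M_j$, the same decomposition of $P(M_i;B_r)$ and $P(L;B_r)$ into the sum of the pieces minus the interface terms (via \cite[Lemma 3.3]{caroccia}), and the same use of Lemma~\ref{strangedecomposition} to show that passing from $E_j$ to $E_j\cap L$ cannot increase the interfaces. The one genuine point of divergence is your reformulation of the Cheeger minimality of $E_j$ as minimality of $F\mapsto P(F)-\frac{P(E_j)}{|E_j|}\,|F|$ over $F\subset\Omega\setminus M_j$: this makes the per-piece estimate valid even when $|L_j|=0$, whereas the paper works directly with the ratio $P(F_j)/|F_j|$ and must therefore first prove $|F_j|>0$ --- and that is precisely where the hypothesis $r<1/h_k(\Omega)$ enters (if $|F_j|=0$ then $E_j\subset\subset B_r$ up to a null set, and the isoperimetric comparison $P(E_j)/|E_j|\geq N/r>h_k(\Omega)$ gives a contradiction). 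Consequently your closing remark is accurate: in your formulation the inequality holds for every ball $B_r\subset\subset\Omega$, and the radius restriction survives in the statement only to fit the definition of distributional mean curvature bounded from above.
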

\begin{proof}
	Set $r_0=\frac{1}{h_k(\Omega)}$. Let $B_r \subset \subset \Omega$ be a ball of radius $r \in (0,r_0)$, and let $L \subset M_i$ be a set of finite perimeter such that $M_i \setminus L \subset \subset B_r$. Define $F_j := E_j \cap L$. By our choice of $r_0$, it holds $|F_j| > 0$ for every $j\neq i$; if this was not the case, then there would exist a set $E_j$ such that, up to negligible sets, $E_j \subset M_i \setminus L \subset \subset B_r$, and therefore
	\[ 
	h_k(\Omega) \geq \frac{P(E_j)}{|E_j|} \geq \frac{P(B_r)}{|B_r|} = \frac{N}{r} > \frac{N}{r_0} > h_k(\Omega), 
	\]
	a contradiction. Since $F_j \subset \Omega \setminus M_j$ for every $j \neq i$, and $E_j$ is a Cheeger set of $\Omega \setminus M_j$ (by $1$-adjustment assumption), it holds
	\[ 
	\frac{P(E_j)}{|E_j|} \leq \frac{P(F_j)}{|F_j|}
	\]
	and therefore
	\[ 
	\frac{P(E_j;B_r)+P(E_j;B_r^c)}{|E_j|} \leq \frac{P(F_j;B_r)+P(F_j;B_r^c)}{|E_j|-|E_j \setminus L|},
	\]
	which implies
	\[ 
	P(E_j;B_r) \leq P(F_j;B_r) + \frac{P(E_j)}{|E_j|}|E_j \setminus L| \leq P(F_j;B_r) + h_k(\Omega)|E_j \setminus L|.
	\]
	Using \cite[Lemma 3.3]{caroccia}, we obtain
	\begin{align*}
	P(M_i;B_r) & = \sum_{j \neq i} P(E_j;B_r) - \sum_{j,l\neq i,\,j\neq l} \mathcal{H}^{N-1}(\partial^* E_j \cap \partial^* E_l \cap B_r) \\ & \leq \sum_{j \neq i} \left(P(F_j;B_r) + h_k(\Omega)|E_j \setminus L|\right) - \sum_{j,l\neq i,\,j\neq l} \mathcal{H}^{N-1}(\partial^* E_j \cap \partial^* E_l \cap B_r) \\ & \leq \sum_{j \neq i} P(F_j;B_r) - \sum_{j,l\neq i,\,j\neq l} \mathcal{H}^{N-1}(\partial^* E_j \cap \partial^* E_l \cap B_r) +  h_k(\Omega)|M_i \setminus L|.
	\end{align*}
	Moreover, applying again \cite[Lemma 3.3]{caroccia}, we get
	\begin{align*} 
	P(L;B_r) & = P(M_i \cap L ;B_r)  \\ & = \sum_{j \neq i} P(F_j;B_r) - \sum_{j,l\neq i,\,j\neq l} \mathcal{H}^{N-1}(\partial^* F_j \cap \partial^* F_l \cap B_r) \\ & \geq \sum_{j \neq i} P(F_j;B_r) - \sum_{j,l\neq i,\,j\neq l} \mathcal{H}^{N-1}(\partial^* E_j \cap \partial^* E_l \cap B_r), 
	\end{align*}
	where we used Lemma \ref{strangedecomposition} and the fact that $M_i \cap L = L$. The above inequalities finally give
	\[ 
	P(M_i;B_r) \leq P(L;B_r) + h_k(\Omega) |M_i \setminus L|.
	\]
\end{proof}

\begin{rem}
Reasoning in a similar way, one can show that each Cheeger set $E_i$, $i \in \{1,\dots,k\}$, has distributional mean curvature bounded from above by $h_k(\Omega)$ in $\R^N$. 
\end{rem}

\begin{prop}
	Let $(E_1,\dots,E_k) \in \mathcal{E}_k$ be a $1$-adjusted Cheeger $k$-tuple of $\Omega$. Then, each $E_i$ is $(\Lambda,r_0)$-perimeter minimizing in $\Omega$ for $\Lambda = h_k(\Omega)$ and $r_0 = \frac{1}{h_k(\Omega)}$. 
\end{prop}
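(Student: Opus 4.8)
The plan is to reduce the localized $(\Lambda,r_0)$-minimality inequality to a single global perimeter bound, and then to exploit the Cheeger property of $E_i$ together with the regularity of $M_i := \bigcup_{j\neq i} E_j$ established in Lemma \ref{lemma1regularity}. Write $h := h_k(\Omega)$ and $h_i := P(E_i)/|E_i|$. Since $(E_1,\dots,E_k)$ minimizes $h_k(\Omega)$ we have $h_i \le h$, and by the $1$-adjustment hypothesis $h_i = h_1(\Omega\setminus M_i)$, so that $E_i$ is a Cheeger set of $\Omega\setminus M_i$. Fix a ball $B_r\subset\subset\Omega$ with $r < r_0 = 1/h$ and a competitor $F$ of finite perimeter with $E_i\bigtriangleup F\subset\subset B_r$. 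Since $E_i$ and $F$ coincide outside $B_r$, one has $P(E_i;B_r^c)=P(F;B_r^c)$, so $P(E_i)-P(F)=P(E_i;B_r)-P(F;B_r)$, and the claim $P(E_i;B_r)\le P(F;B_r)+h\,|E_i\bigtriangleup F|$ is equivalent to the global inequality $P(E_i)\le P(F)+h\,|E_i\bigtriangleup F|$, which I will establish.

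The first observation is that, being a Cheeger set of $\Omega\setminus M_i$ with constant $h_i$, the set $E_i$ minimizes the functional $G\mapsto P(G)-h_i|G|$ among all $G\subset\Omega\setminus M_i$: indeed $P(G)\ge h_i|G|$ for every such $G$, while $P(E_i)=h_i|E_i|$. Applying this to the truncated competitor $\widetilde{F}:=F\setminus M_i\subset\Omega\setminus M_i$ yields $P(E_i)\le P(\widetilde{F})+h_i(|E_i|-|\widetilde{F}|)$. Using $h_i\le h$ together with $|E_i|-|\widetilde{F}|\le |E_i\setminus\widetilde{F}|\le |E_i\bigtriangleup\widetilde{F}|$, this becomes $P(E_i)\le P(\widetilde{F})+h\,|E_i\bigtriangleup\widetilde{F}|$.

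It remains to control $P(\widetilde{F})$ by $P(F)$, and this is the main obstacle. Because $B_r$ is only compactly contained in $\Omega$ and not in $\Omega\setminus M_i$, the competitor $F$ may intrude into $M_i$, and the truncation $\widetilde{F}=F\setminus M_i$ could a priori create new boundary along $\partial M_i$ and thereby increase the perimeter. This is resolved by combining the submodularity estimate of Lemma \ref{maggidecomposition}, applied to the pair $(F,M_i)$ on $A=B_r$, namely $P(\widetilde{F};B_r)+P(M_i\setminus F;B_r)\le P(F;B_r)+P(M_i;B_r)$, with the distributional mean-curvature bound of Lemma \ref{lemma1regularity}. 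For the latter I take $L:=M_i\setminus F\subset M_i$ and note that, since $E_i\cap M_i=\emptyset$, one has $M_i\setminus L = M_i\cap F\subset E_i\bigtriangleup F\subset\subset B_r$; as $r<1/h$, Lemma \ref{lemma1regularity} gives $P(M_i;B_r)\le P(M_i\setminus F;B_r)+h\,|M_i\cap F|$. Substituting this bound and cancelling the (finite) term $P(M_i\setminus F;B_r)$ leaves $P(\widetilde{F};B_r)\le P(F;B_r)+h\,|M_i\cap F|$; since $\widetilde{F}$ and $F$ agree outside $B_r$, this upgrades to $P(\widetilde{F})\le P(F)+h\,|M_i\cap F|$.

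Finally I combine the two chains using the volume identity $|E_i\bigtriangleup\widetilde{F}|+|M_i\cap F|=|E_i\bigtriangleup F|$, which follows from the disjoint decomposition $F=\widetilde{F}\sqcup(F\cap M_i)$ together with $E_i\cap M_i=\emptyset$:
\[ P(E_i)\le P(\widetilde{F})+h\,|E_i\bigtriangleup\widetilde{F}|\le P(F)+h\,|M_i\cap F|+h\,|E_i\bigtriangleup\widetilde{F}|=P(F)+h\,|E_i\bigtriangleup F|. \]
Returning to the localized form via $P(E_i)-P(F)=P(E_i;B_r)-P(F;B_r)$ yields $P(E_i;B_r)\le P(F;B_r)+h\,|E_i\bigtriangleup F|$, which is the assertion with $\Lambda=h_k(\Omega)$ and $r_0=1/h_k(\Omega)$. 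The crux is the truncation step: the potential increase of perimeter caused by cutting $F$ along $M_i$ is exactly absorbed by the mean-curvature bound for $M_i$, and it is precisely the restriction $r<1/h_k(\Omega)$ that makes Lemma \ref{lemma1regularity} available.
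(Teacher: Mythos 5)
Your proof is correct and follows essentially the same route as the paper: you truncate the competitor to $D_i=F\setminus M_i$, invoke the Cheeger minimality of $E_i$ in $\Omega\setminus M_i$, and then control the perimeter created by the truncation via Lemma \ref{maggidecomposition} applied to $(F,M_i)$ together with the mean-curvature bound of Lemma \ref{lemma1regularity} with $L=M_i\setminus F$ --- exactly the paper's two inequalities \eqref{lem37:1} and \eqref{lem37:2}. The only (harmless) difference is presentational: you pass to the global inequality $P(E_i)\le P(F)+h\,|E_i\bigtriangleup F|$ and localize at the end, whereas the paper keeps all perimeters relative to $B_r$ throughout.
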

\begin{proof}
	As in Lemma \ref{lemma1regularity}, define
	\[ 
	M_i := \bigcup_{j \neq i} E_j.
	\]
	Let $r_0 = \frac{1}{h_k(\Omega)}$. Let $B_r \subset \subset \Omega$ be a ball of radius $r \in (0,r_0)$, and let $F_i$ be such that $F_i \bigtriangleup E_i \subset \subset B_r$. Define $D_i := F_i \setminus M_i$ and observe that, by the definition of a $1$-adjusted Cheeger $k$-tuple,
	\[ 
	\frac{P(E_i)}{|E_i|} \leq \frac{P(D_i)}{|D_i|}.
	\]
	Therefore, noting that $E_i = D_i = F_i$ in $B_r^c$, we get
	\begin{align*}
	\frac{P(E_i;B_r)+P(E_i;B_r^c)}{|E_i|} 
	 \leq \frac{P(D_i;B_r)+P(D_i;B_r^c)}{|F_i|-|F_i\cap M_i|} 
	= \frac{P(D_i;B_r)+P(E_i;B_r^c)}{|E_i|+|F_i\cap B_r| - |E_i \cap B_r| -|F_i\cap M_i|},
	\end{align*}
	and hence
	\[ 
	P(E_i;B_r)|E_i| \leq P(D_i;B_r)|E_i| + P(E_i)(|E_i \cap B_r| +|F_i\cap M_i| - |F_i\cap B_r|).
	\]
	Observing that $F_i \cap M_i \subset F_i \setminus E_i$, we obtain
	\begin{align*}
	|E_i \cap B_r| +|F_i\cap M_i| - |F_i\cap B_r| 
	&\leq 
	|E_i \cap B_r| +|F_i \setminus E_i| - |F_i\cap B_r| \\
	&= 
	|E_i \cap B_r| +|F_i  \cap B_r| - |F_i \cap E_i \cap B_r| - |F_i\cap B_r| \\
	&= |E_i \cap B_r| - |F_i \cap E_i \cap B_r| = |E_i \setminus F_i|,
	\end{align*}
	and hence
	\begin{equation}\label{lem37:1}
	P(E_i;B_r) \leq P(D_i;B_r) + \frac{P(E_i)}{|E_i|} |E_i \setminus F_i| \leq P(D_i;B_r) + h_k(\Omega) |E_i \setminus F_i|.
	\end{equation}
	On the other hand, using the fact that $M_i \setminus F_i \subset M_i$ and $M_i \setminus (M_i \setminus F_i) = M_i \cap F_i \subset \subset B_r$, by Lemma \ref{lemma1regularity} we get
	\begin{align*} 
	P(M_i;B_r) \leq P(M_i \setminus F_i;B_r) + h_k(\Omega) |M_i \cap F_i| \leq P(M_i \setminus F_i;B_r) + h_k(\Omega) |F_i \setminus E_i|.
	\end{align*}
	Therefore, by Lemma \ref{maggidecomposition},
	\begin{equation}\label{lem37:2}
	P(D_i;B_r) \leq P(F_i;B_r) + P(M_i;B_r) - P(M_i \setminus F_i;B_r) \leq P(F_i;B_r) + h_k(\Omega) |F_i \setminus E_i|.
	\end{equation}
	Finally, combining \eqref{lem37:1} and \eqref{lem37:2}, we obtain
	\[
	P(E_i;B_r) \leq P(F_i;B_r) + h_k(\Omega)|E_i \bigtriangleup F_i|,
	\]
	which proves that $E_i$ is $(\Lambda,r_0)$-perimeter minimizing in $\Omega$ with $\Lambda = h_k(\Omega)$ and $r_0 = \frac{1}{h_k(\Omega)}$. 
\end{proof}

\begin{thm}\label{thm:interiorregularity}
	Let $(E_1,\dots,E_k) \in \mathcal{E}_k$ be a $1$-adjusted Cheeger $k$-tuple of $\Omega$. Then, for each $i \in \{1,\dots,k\}$, the following assertions hold:
	\begin{enumerate}[label={\rm(\roman*)}]
		\item\label{thm:interiorregularity:i} $\partial^* E_i \cap \Omega$ is of class $C^{1,\gamma}$ for every $\gamma \in (0,\frac{1}{2})$.
		\item\label{thm:interiorregularity:ii} The set $(\partial E_i \setminus \partial^* E_i) \cap \Omega$ has Hausdorff dimension at most $N-8$.
		\item\label{thm:interiorregularity:iii} If $N \leq 7$, then $\partial E_i \cap \Omega$ is of class $C^{1,\gamma}$ for every $\gamma \in (0,\frac{1}{2})$.
		\item\label{thm:interiorregularity:iv} Suppose that $\mathcal{H}^{N-1}(\partial \Omega)<+\infty$. Then there exists an open set $\widetilde{E}_i$ such that $|E_i \bigtriangleup \widetilde{E}_i| = 0$. Moreover, $(\widetilde{E}_1,\dots,\widetilde{E}_k)$ is a $1$-adjusted Cheeger $k$-tuple.
		\item\label{thm:interiorregularity:v} Suppose that $\Omega$ has finite perimeter. Then $\partial E_i \cap \Omega$ can meet $\partial^* \Omega$ only in a tangential way, that is, if $x \in \partial^* \Omega \cap \partial E_i$, then $x \in \partial^* E_i$, and $\nu_\Omega(x)=\nu_{E_i}(x)$.
	\end{enumerate}
\end{thm}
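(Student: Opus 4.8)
The plan is to deduce all five assertions from the preceding Proposition, which provides that each $E_i$ is a $(\Lambda,r_0)$-perimeter minimizer in $\Omega$ with $\Lambda=h_k(\Omega)$ and $r_0=1/h_k(\Omega)$, and then to feed this into the standard regularity theory for almost minimizers of the perimeter from \cite{maggi}. As a harmless normalization I would first replace $E_i$ by its \emph{good representative}, for which the topological boundary coincides with the support of the Gauss--Green measure, $\partial E_i=\operatorname{spt}\mu_{E_i}$; this changes $E_i$ only on a Lebesgue-null set and hence preserves both the minimizing and the $1$-adjustment properties.

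For \ref{thm:interiorregularity:i} and \ref{thm:interiorregularity:ii} I would invoke directly the interior regularity theorem for $(\Lambda,r_0)$-minimizers: the reduced boundary $\partial^* E_i\cap\Omega$ is an embedded $C^{1,\gamma}$ hypersurface for every $\gamma\in(0,\tfrac12)$, the cap $\tfrac12$ on the exponent coming from the volume penalization $\Lambda|E_i\bigtriangleup F|$, whereas the singular set $(\partial E_i\setminus\partial^* E_i)\cap\Omega$ has Hausdorff dimension at most $N-8$ by Federer's dimension-reduction argument. Assertion \ref{thm:interiorregularity:iii} is then immediate: for $N\le 7$ the bound $N-8<0$ forces the singular set to be empty, so $\partial E_i\cap\Omega=\partial^* E_i\cap\Omega$ and \ref{thm:interiorregularity:i} applies.

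For \ref{thm:interiorregularity:iv} I would set $\widetilde E_i:=E_i^{(1)}\cap\Omega$. To see that it is open, note that the density estimates available for $(\Lambda,r_0)$-minimizers keep the density of $E_i$ bounded away from $1$ on $\operatorname{spt}\mu_{E_i}\cap\Omega$, so any $x\in E_i^{(1)}\cap\Omega$ lies off $\operatorname{spt}\mu_{E_i}$; a small ball $B\subset\Omega$ around $x$ then carries no perimeter of $E_i$ and, $x$ being of density one, satisfies $|B\setminus E_i|=0$, whence $B\subset E_i^{(1)}$. Since $\mathcal{H}^{N-1}(\partial\Omega)<+\infty$ gives $|\partial\Omega|=0$ and $E_i\subset\Omega$ up to a null set, one has $|E_i\bigtriangleup\widetilde E_i|=|E_i^{(1)}\setminus\Omega|\le|\partial\Omega|=0$. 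The $\widetilde E_i$ are pairwise disjoint because $|E_i\cap E_j|=0$ yields $E_i^{(1)}\subset E_j^{(0)}$, and since each agrees with $E_i$ up to measure zero, the $1$-adjustment identities are untouched, so $(\widetilde E_1,\dots,\widetilde E_k)$ is again a $1$-adjusted Cheeger $k$-tuple.

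I expect \ref{thm:interiorregularity:v} to be the main obstacle, because the minimality granted by the Proposition is purely interior and says nothing at $\partial\Omega$. The plan is to use instead the \emph{global} Cheeger minimality: as a Cheeger set of $\Omega\setminus M_i$ (with $M_i=\bigcup_{j\neq i}E_j$), $E_i$ minimizes $F\mapsto P(F)-h_k(\Omega)|F|$ among all $F\subset\Omega\setminus M_i$. Fixing $x\in\partial^*\Omega\cap\partial E_i$, I would blow up at $x$: the rescalings of $\Omega$ converge to the half-space $H=\{y:\,y\cdot\nu_\Omega(x)\le 0\}$, and those of $E_i$, being contained in $\Omega$, converge to a perimeter-minimizing limit $E_\infty\subseteq H$; the global minimality rules out a transversal crossing and forces $E_\infty=H$, which rescaled back gives $x\in\partial^* E_i$ with $\nu_{E_i}(x)=\nu_\Omega(x)$. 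The technical heart, to be carried out carefully, is the boundary blow-up and the application of the corresponding boundary-regularity result for minimizers under the constraint $E_i\subset\Omega$; the a.e.\ version of the normal matching (two nested half-space blow-ups on $\partial^* E_i\cap\partial^*\Omega$ must coincide) comes for free from $E_i^{(1)}\subset\Omega^{(1)}$, while the additional care is needed to exclude singular contact points and junctions with the remaining sets $E_j$ along $\partial^*\Omega$.
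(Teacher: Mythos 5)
Your proposal is correct and follows essentially the same route as the paper: items \ref{thm:interiorregularity:i}--\ref{thm:interiorregularity:iii} from the $(\Lambda,r_0)$-minimality via the classical regularity theory in \cite{maggi}, item \ref{thm:interiorregularity:iv} by replacing each $E_i$ with an open representative that differs by a null set (the paper takes $E_i\setminus\partial E_i$, you take $E_i^{(1)}\cap\Omega$, which coincide up to negligible sets once the good representative is fixed), and item \ref{thm:interiorregularity:v} by the boundary blow-up argument of \cite[Appendix A]{leonardipratelli}, which is exactly what the paper cites.
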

\begin{proof}
	The proof of \ref{thm:interiorregularity:i} and \ref{thm:interiorregularity:ii} follows from the fact that each $E_i$ is $(\Lambda,r_0)$-perimeter minimizing in $\Omega$, and from classical regularity results, cf.\ \cite[Theorems 21.8 and 28.1]{maggi}. Assertion \ref{thm:interiorregularity:iii} easily follows from \ref{thm:interiorregularity:i} and \ref{thm:interiorregularity:ii}. Let us now prove \ref{thm:interiorregularity:iv}. Since $\mathcal{H}^{N-1}(\partial \Omega)<+\infty$, by \ref{thm:interiorregularity:ii} we have that the topological boundary of each $E_i$ has Hausdorff dimension $N-1$. If we define $\widetilde{E}_i := E_i \setminus \partial E_i$, then $\widetilde{E}_i$ is an open set such that $P(\widetilde{E}_i)=P(E_i)$ and $|\widetilde{E}_i|=|E_i|$. Therefore, $(\widetilde{E}_1,\dots,\widetilde{E}_k)$ is a $1$-adjusted Cheeger $k$-tuple of $\Omega$. Finally, assertion \ref{thm:interiorregularity:v} can be proven as in \cite[Appendix A]{leonardipratelli}.
\end{proof}

\begin{rem}
If $\Omega$ has a boundary of class $C^1$, and $N \leq 7$, then Theorem \ref{thm:interiorregularity} implies that the boundary of each $1$-adjusted Cheeger set $E_i$ is of class $C^1$ as well. Concerning the boundary regularity, we also refer to \cite[Theorem 3]{gonzalezmassaritamanini}.
\end{rem}

\begin{prop}\label{prop:approximation}
Suppose that $\mathcal{H}^{N-1}(\partial \Omega \setminus \partial^* \Omega)=0$. Let $(E_1,\dots,E_k) \in \mathcal{E}_k$ be a $1$-adjusted Cheeger $k$-tuple of $\Omega$ such that every $E_i$ is open. Then, for every $i \in \{1,\dots,k\}$, there exists a sequence $\{E_{i,m}\}_{m \in \mathbb{N}}$ such that:
 \begin{enumerate}[label={\rm(\roman*)}]
  \item\label{prop:approximation:1} $E_{i,m} \subset \subset E_i$ for every $m \in \mathbb{N}$;
  \item\label{prop:approximation:2} $\partial E_{i,m}$ is smooth for every $m \in \mathbb{N}$;
  \item\label{prop:approximation:3} $E_{i,m} \to E_i$ in $L^1(\Omega)$ as $m \to +\infty$;
  \item\label{prop:approximation:4} $P(E_{i,m}) \to P(E_i)$ as $m \to +\infty$.
 \end{enumerate}
\end{prop}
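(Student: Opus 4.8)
The plan is to realise each $E_{i,m}$ as a smooth superlevel set of a mollification of $\chi_{E_i}$, extract good levels with the coarea formula, and secure the compact containment through the density estimates coming from almost-minimality. Fix $i$ and write $E=E_i$; recall that $E$ is open, bounded, of finite perimeter, and $(\Lambda,r_0)$-perimeter minimizing in $\Omega$ with $\Lambda=h_k(\Omega)$ and $r_0=\tfrac{1}{h_k(\Omega)}$. Let $\rho_\varepsilon$ be a radially symmetric, radially nonincreasing mollifier, strictly positive on $B_\varepsilon$, and set $u_\varepsilon:=\chi_E*\rho_\varepsilon\in C^\infty(\R^N)$. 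Standard properties give $0\le u_\varepsilon\le 1$, $u_\varepsilon\to\chi_E$ in $L^1$ and a.e., and $\int_{\R^N}|\nabla u_\varepsilon|\,dx\to |D\chi_E|(\R^N)=P(E)$. The smoothness \ref{prop:approximation:2}, the $L^1$-convergence \ref{prop:approximation:3}, and the perimeter convergence \ref{prop:approximation:4} will follow from a coarea–Fatou argument once containment is established: by the coarea formula $\int_0^1 P(\{u_\varepsilon>s\})\,ds=\int_{\R^N}|\nabla u_\varepsilon|\,dx\to P(E)$, while for each fixed $s\in(0,1)$ one has $\{u_\varepsilon>s\}\to E$ in $L^1$, so lower semicontinuity yields $\liminf_\varepsilon P(\{u_\varepsilon>s\})\ge P(E)$ for a.e.\ $s$. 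Writing $g_\varepsilon(s):=P(\{u_\varepsilon>s\})$, the bounded quantities $(g_\varepsilon-P(E))^{-}$ tend to $0$ a.e., hence their integrals tend to $0$ by dominated convergence; combined with $\int_0^1(g_\varepsilon-P(E))\,ds\to 0$ this forces $\int_0^1(g_\varepsilon-P(E))^{+}\,ds\to 0$, so along a subsequence $\varepsilon_m\to 0$ we get $g_{\varepsilon_m}(s)\to P(E)$ for a.e.\ $s$. By Sard's theorem a.e.\ $s$ is a regular value of every $u_{\varepsilon_m}$, so I may fix a single $s^\ast$ (below, in the admissible range $(\theta,1)$) that is simultaneously a regular value for all $m$ and a point of this convergence, and put $E_{i,m}:=\{u_{\varepsilon_m}>s^\ast\}$.

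The crucial step is the compact containment \ref{prop:approximation:1}, and here the regularity of the reduced boundary alone does not suffice: one must also control $u_\varepsilon$ on the singular set $(\partial E\setminus\partial^*E)\cap\Omega$, which is only $\mathcal H^{N-1}$-negligible. This is exactly what the uniform density estimates for $(\Lambda,r_0)$-minimizers (cf.\ \cite{maggi}) supply. Indeed, the lower density estimate for the complement gives a constant $c>0$ with $|E^c\cap B_r(y)|\ge c\,|B_r|$ for every $y\in\partial E\cap\Omega$ and $r<r_0$. For a point $x$ with $\mathrm{dist}(x,\partial E)\le \varepsilon/2$ and nearest boundary point $y$, the ball $B_{\varepsilon/4}(y)$ lies inside $B_{3\varepsilon/4}(x)$, where $\rho_\varepsilon\ge c_1\varepsilon^{-N}$; feeding the $E^c$-mass of this ball into the integral defining $u_\varepsilon$ yields a dimensionless lower bound on the mass the mollifier sees outside $E$, hence $u_\varepsilon(x)\le 1-c_2=:\theta<1$, uniformly in $\varepsilon$ and $x$. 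For exterior points at distance in $(\varepsilon/2,\varepsilon)$ from $\partial E$ one has $B_{\varepsilon/2}(x)\cap E=\emptyset$, giving $u_\varepsilon(x)\le 1-\int_{B_{1/2}}\rho<1$, and farther points give $u_\varepsilon=0$. Enlarging $\theta$ if necessary, $u_\varepsilon\le\theta$ on $\{\,\mathrm{dist}(\cdot,\partial E)\le\varepsilon/2\,\}\cup(\R^N\setminus\overline E)$, so for $s^\ast\in(\theta,1)$ the closed set $\{u_{\varepsilon_m}\ge s^\ast\}$ sits at distance $>\varepsilon_m/2$ from $\partial E$ and is therefore a compact subset of the open set $E$, which is \ref{prop:approximation:1}.

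The genuine obstacle is the contact with $\partial\Omega$: a boundary point of $E$ lying on $\partial\Omega$ carries no interior ball $B_r\subset\subset\Omega$, so the density estimates above are unavailable, and at an isolated density-one point of $\partial\Omega$ the mollified value could approach $1$ and spoil containment. This is where the hypothesis $\mathcal H^{N-1}(\partial\Omega\setminus\partial^*\Omega)=0$ and the tangential-contact property of Theorem \ref{thm:interiorregularity}\ref{thm:interiorregularity:v} enter. I would treat it by a preliminary inward truncation: set $\Omega_\tau:=\{x\in\Omega:\mathrm{dist}(x,\partial\Omega)>\tau\}$ and work with $E^\tau:=E\cap\Omega_\tau$, which is compactly contained in $\Omega$, so that every boundary point now admits interior balls and the engine of the previous paragraph applies verbatim (using that, for a.e.\ $\tau$, the cut $\{\mathrm{dist}(\cdot,\partial\Omega)=\tau\}$ is rectifiable and $E^\tau$ has density $\tfrac12$ there). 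One has $E^\tau\uparrow E$, hence $E^\tau\to E$ in $L^1$; the delicate point is $P(E^\tau)\to P(E)$ as $\tau\to 0$, and this is precisely where tangentiality is decisive: since $\partial^*E$ meets $\partial^*\Omega$ tangentially ($\nu_E=\nu_\Omega$) and $\mathcal H^{N-1}(\partial\Omega\setminus\partial^*\Omega)=0$, the area of the fresh cut introduced on $\partial\Omega_\tau$ balances, in the limit, the contact perimeter $\mathcal H^{N-1}(\partial^*E\cap\partial^*\Omega)$ removed by the truncation, so no perimeter is created or lost. Running the mollification/coarea scheme on $E^\tau$ for a suitable $\tau=\tau_m\to 0$ and diagonalising with the level construction of the first paragraph produces a single sequence $E_{i,m}$ satisfying \ref{prop:approximation:1}–\ref{prop:approximation:4}; carrying this out for each of the finitely many indices $i$ completes the proof.
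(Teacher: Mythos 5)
Your overall strategy is genuinely different from the paper's: the paper simply verifies, via Theorem \ref{thm:interiorregularity}\ref{thm:interiorregularity:ii} and \ref{thm:interiorregularity:v} together with the hypothesis $\mathcal{H}^{N-1}(\partial\Omega\setminus\partial^*\Omega)=0$, that $\mathcal{H}^{N-1}(\partial E_i\setminus\partial^* E_i)=0$, and then applies Schmidt's strict interior approximation theorem \cite[Theorem 1.1]{schmidt} with $E_i$ itself as the ambient open set. You instead try to reprove the approximation from scratch. The interior part of your argument is sound and complete: the mollification/coarea/Fatou scheme for \ref{prop:approximation:2}--\ref{prop:approximation:4} is standard, and the uniform complement density estimate for $(\Lambda,r_0)$-minimizers does give $u_\varepsilon\le\theta<1$ on the $\tfrac{\varepsilon}{2}$-neighbourhood of $\partial E\cap\Omega$, which handles the $\mathcal{H}^{N-1}$-negligible singular set uniformly and yields the compact containment away from $\partial\Omega$. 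If $\partial E_i\cap\partial\Omega=\emptyset$ this would be a legitimate self-contained alternative to citing \cite{schmidt}.

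The gap is in the boundary step. After truncating to $E^\tau=E\cap\Omega_\tau$ you need $P(E^\tau)\to P(E)$ as $\tau\to0$, and you justify this by asserting that ``the area of the fresh cut introduced on $\partial\Omega_\tau$ balances, in the limit, the contact perimeter.'' This is not a proof, and it is exactly the nontrivial content of the theorem you are bypassing. Concretely, writing $P(E\cap\Omega_\tau)\le P(E;\Omega_\tau)+\mathcal{H}^{N-1}\bigl(\partial^*\Omega_\tau\cap(E^{(1)}\cup\partial^* E)\bigr)$, the first term tends to $P(E;\Omega)$, but for the second you must prove the \emph{upper} bound
$$
\limsup_{\tau\to 0}\ \mathcal{H}^{N-1}\bigl(\{\mathrm{dist}(\cdot,\partial\Omega)=\tau\}\cap E^{(1)}\bigr)\ \le\ \mathcal{H}^{N-1}(\partial^* E\cap\partial\Omega),
$$
i.e.\ a one-sided Minkowski-content estimate for $\partial\Omega$ weighted by the density of $E$. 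Lower semicontinuity of the perimeter only gives $\liminf_\tau P(E^\tau)\ge P(E)$, which is useless here: if the cut creates excess perimeter in the limit, your inner approximations of $E^{\tau_m}$ have perimeters converging to something strictly larger than $P(E)$ and \ref{prop:approximation:4} fails. Tangentiality ($\nu_E=\nu_\Omega$ on $\partial^*\Omega\cap\partial E$) and $\mathcal{H}^{N-1}(\partial\Omega\setminus\partial^*\Omega)=0$ are indeed the right hypotheses, but converting pointwise density information on the reduced boundary into the uniform $\limsup$ bound above is precisely where the work in \cite{schmidt} lies (the coarea identity $\int_0^\delta\mathcal{H}^{N-1}(\{\mathrm{dist}=\tau\}\cap E^{(1)})\,d\tau=|E\cap\{0<\mathrm{dist}<\delta\}|$ only controls the average over $\tau$, and bounding $|E\cap\{\mathrm{dist}<\delta\}|/\delta$ from above requires a genuine argument). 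Either supply this estimate or, as the paper does, reduce to the hypothesis $\mathcal{H}^{N-1}(\partial E_i\setminus\partial^* E_i)=0$ and invoke \cite[Theorem 1.1]{schmidt} directly.
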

\begin{proof}
 By assertion \ref{thm:interiorregularity:ii} of Theorem \ref{thm:interiorregularity}, each $E_i$ satisfies $\mathcal{H}^{N-1}((\partial E_i \cap \Omega)\setminus \partial^* E_i)=0$. Moreover, by assertion \ref{thm:interiorregularity:v} of Theorem \ref{thm:interiorregularity},  $\partial^* \Omega \cap \partial E_i \subset \partial^* E_i$. Therefore
 \begin{align*} 
 \mathcal{H}^{N-1}(\partial E_i \setminus \partial^* E_i) & = \mathcal{H}^{N-1}((\partial E_i \cap \Omega )\setminus \partial^* E_i) + \mathcal{H}^{N-1}((\partial E_i \cap \partial \Omega)\setminus \partial^* E_i) \\ & \leq \mathcal{H}^{N-1}((\partial E_i \cap \partial \Omega)\setminus (\partial^* \Omega \cap \partial E_i))  = 0.
 \end{align*}
 Therefore, we can apply the approximation result of \cite[Theorem 1.1]{schmidt} to obtain the desired claims. 
\end{proof}

\section[Properties of Cheeger k-tuples]{Properties of Cheeger $k$-tuples}\label{sec:properties}
In this section we prove some qualitative properties of Cheeger $k$-tuples $(E_1,\dots,E_k) \in \mathcal{E}_k$ of $\Omega$. 
Throughout this section we assume that $\Omega$ is a bounded open set.
First we introduce several notations.
\begin{definition}
	The \textit{free boundary} of $E_i$ is 
	$$
	\partial_f E_i := \partial E_i \cap \biggl( \Omega \setminus \bigcup_{j \neq i} \overline{E_j} \biggr).
	$$ 
\end{definition}
\begin{definition}
	The \textit{contact surface} between $E_i$ and $E_j$ (for $i \neq j$) is
	$$
	\partial(E_i E_j) := \partial E_i \cap
	\partial E_j \cap \Omega.
	$$
\end{definition}
\begin{definition}
	The \textit{boundary surface} of $E_i$ is the contact surface between $E_i$ and $\partial \Omega$, that is,
	$$
	\partial_b E_i := \partial E_i \cap \partial \Omega.
	$$
\end{definition}

If $(E_1,\dots,E_k)$ is a $1$-adjusted Cheeger $k$-tuple of $\Omega$, then, in view of Theorem \ref{thm:interiorregularity}, the following decomposition takes place:
$$
\partial E_i = \bigcup_{j \neq i} \partial(E_i E_j) 
\cup \partial_f E_i
\cup \partial_b E_i.
$$
We will denote by $\partial_f^* E_i$ and $\partial^* (E_iE_j)$ the reduced part of $\partial_f E_i$ and $\partial (E_iE_j)$, respectively.

The following results are a consequence of \cite[Theorem 2]{gonzalezmassaritamanini}.
\begin{prop}\label{prop:meancurvature1}
	Let $(E_1,\dots,E_k) \in \mathcal{E}_k$ be a $1$-adjusted Cheeger $k$-tuple of $\Omega$. 
	Then the mean curvature of $\partial_f^* E_i$ (measured from the inside of $E_i$) is a constant equal to $\frac{h_1(E_i)}{N-1}$ for every $i \in \{1,\dots,k\}$.
\end{prop}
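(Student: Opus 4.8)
The plan is to pin down the constant by taking the first variation of the Cheeger ratio $J(F)=P(F)/|F|$, exploiting the obstacle structure of the problem. Write $M_i=\bigcup_{j\neq i}E_j$ and $U_i=\Omega\setminus M_i$. By the definition of a $1$-adjusted Cheeger $k$-tuple, $E_i$ is a Cheeger set of $U_i$, so that $P(E_i)/|E_i|=h_1(E_i)$ and $E_i$ minimizes $J$ among all measurable subsets of $U_i$ of positive measure. By its very definition the free boundary $\partial_f E_i$ lies in the open set $V_i:=\Omega\setminus\bigcup_{j\neq i}\overline{E_j}$, and this is exactly the region in which $E_i$ may be deformed in both directions without ceasing to be an admissible competitor (the volume constraint being automatically encoded in the ratio).

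First I would recall, from Theorem \ref{thm:interiorregularity}\ref{thm:interiorregularity:i}, that $\partial^* E_i\cap\Omega$ is of class $C^{1,\gamma}$, so that $\partial_f^* E_i$ is a relatively open $C^{1,\gamma}$ hypersurface on which the outer normal $\nu_{E_i}$ and the mean curvature make sense. Fix $x_0\in\partial_f^* E_i$ and choose $X\in C^1_c(V_i;\R^N)$ supported in a small ball around $x_0$ contained in $V_i$; let $\Phi_t$ denote its flow. Since $\mathrm{supp}\,X$ is compactly contained in the open set $V_i$, which is disjoint from each $\overline{E_j}$ ($j\neq i$) and from $\partial\Omega$, for $|t|$ small the set $\Phi_t(E_i)$ is again a subset of $U_i$ of positive measure, hence an admissible competitor. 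Minimality of $E_i$ for $J$ on $U_i$ then forces $\frac{d}{dt}\big|_{t=0}J(\Phi_t(E_i))=0$.

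Next I would insert the standard first-variation formulas
\[
\frac{d}{dt}\Big|_{t=0} P(\Phi_t(E_i)) = \int_{\partial_f^* E_i} H_{E_i}\,(X\cdot\nu_{E_i})\,d\mathcal{H}^{N-1},
\qquad
\frac{d}{dt}\Big|_{t=0} |\Phi_t(E_i)| = \int_{\partial_f^* E_i} (X\cdot\nu_{E_i})\,d\mathcal{H}^{N-1},
\]
where $H_{E_i}$ is the scalar mean curvature (the sum of the principal curvatures) and only the free part $\partial_f^* E_i$ contributes, since $\mathrm{supp}\,X\subset V_i$. Differentiating $J=P/|{\cdot}|$ and using $P(E_i)/|E_i|=h_1(E_i)$ yields
\[
\int_{\partial_f^* E_i}\bigl(H_{E_i}-h_1(E_i)\bigr)\,(X\cdot\nu_{E_i})\,d\mathcal{H}^{N-1}=0.
\]
As $x_0$ and $X$ are arbitrary and $X\cdot\nu_{E_i}$ ranges over all compactly supported functions on $\partial_f^* E_i$, the fundamental lemma of the calculus of variations gives $H_{E_i}=h_1(E_i)$ on $\partial_f^* E_i$. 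Equivalently, the (averaged) mean curvature of $\partial_f^* E_i$, measured from the inside of $E_i$ as in the statement, equals the constant $h_1(E_i)/(N-1)$. As a sanity check, for $N=2$ this recovers the familiar fact that the free boundary is made of circular arcs of curvature $h_1(E_i)$.

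I expect the main obstacle to be the rigorous justification of the two first-variation identities at the level of sets of finite perimeter whose reduced boundary is a priori only $C^{1,\gamma}$: one has to argue that the weak mean curvature produced by the Euler--Lagrange equation coincides with the classical one, and that the constant-mean-curvature equation bootstraps $\partial_f^* E_i$ to a smooth hypersurface. This regularity, together with the fact that inside $V_i$ the minimization is genuinely free, is precisely what is supplied by \cite[Theorem 2]{gonzalezmassaritamanini} (equivalently, by viewing $E_i$ as a volume-constrained perimeter minimizer in $U_i$ at volume $|E_i|$). The remaining, essentially elementary point is the computation above, which identifies the Lagrange multiplier with $h_1(E_i)=P(E_i)/|E_i|$.
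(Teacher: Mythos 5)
Your argument is correct and is essentially the paper's own: the paper proves this proposition by a one-line appeal to \cite[Theorem 2]{gonzalezmassaritamanini}, viewing $E_i$ as a (volume-constrained) perimeter minimizer in the open region $\Omega\setminus\bigcup_{j\neq i}\overline{E_j}$ where it is free to move in both directions, and your first-variation computation identifying the Lagrange multiplier with $h_1(E_i)=P(E_i)/|E_i|$ is exactly the unpacking of that citation. No gaps to report.
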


\begin{prop}\label{prop:meancurvature2}
	Let $(E_1,\dots,E_k) \in \mathcal{E}_k$ be a $2$-adjusted Cheeger $k$-tuple of $\Omega$. 
	Then the mean curvature of $\partial^* (E_iE_j)$ (for $i \neq j$) is constant.
\end{prop}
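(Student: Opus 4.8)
The plan is to obtain the constancy of the mean curvature along $\partial^*(E_iE_j)$ from a first–variation argument that transfers volume across the interface between $E_i$ and $E_j$, exploiting the optimality encoded in the $2$-adjustment condition. Fix $i\neq j$ and, without loss of generality, assume $J(E_i)\geq J(E_j)$, writing $J(E):=P(E)/|E|$ as before; by the $2$-adjustment condition, $h:=J(E_i)=\max\{J(E_i),J(E_j)\}=h_2(E_i\cup E_j)$. By Theorem \ref{thm:interiorregularity} together with the interface regularity of \cite{gonzalezmassaritamanini}, the reduced contact surface is a $C^{1,\gamma}$ hypersurface contained in $\partial^* E_i\cap\partial^* E_j$; since $|E_i\cap E_j|=0$, at its points the outer unit normals satisfy $\nu_{E_j}=-\nu_{E_i}$, so the (generalized) scalar mean curvatures obey $H_j=-H_i$. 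I denote by $H_i$ the mean curvature of $\partial E_i$ along this surface.

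First I would localize. Choose $x_0\in\partial^*(E_iE_j)$ lying in the essential interior $(E_i\cup E_j)^{(1)}$, which holds for $\mathcal H^{N-1}$-a.e.\ such point, and a ball $B=B_\rho(x_0)\subset\subset\Omega\setminus\bigcup_{l\neq i,j}\overline{E_l}$ with $B\subset\subset(E_i\cup E_j)^{(1)}$; set $\Sigma:=\partial^*(E_iE_j)\cap B$. For $X\in C_c^\infty(B;\mathbb R^N)$ with flow $\Phi_t$, put $E_i^t:=\Phi_t(E_i)$ and $E_j^t:=\Phi_t(E_j)$. As $\Phi_t$ is a diffeomorphism equal to the identity off $B$ and $B$ sits in the interior of the union, $E_i^t$ and $E_j^t$ stay disjoint with $E_i^t\cup E_j^t=E_i\cup E_j$; thus $(E_i^t,E_j^t)$ is an admissible competitor for $h_2(E_i\cup E_j)$. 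Writing $g:=X\cdot\nu_{E_i}$ on $\Sigma$, the perturbation changes both perimeters only through the common interface, whose first variation is $\int_{\Sigma}H_i\,g$, while the volumes vary oppositely, $\frac{d}{dt}|E_i^t|\big|_0=\int_{\Sigma}g=-\frac{d}{dt}|E_j^t|\big|_0$. Hence
\[
a(g):=\left.\tfrac{d}{dt}J(E_i^t)\right|_{0}=\tfrac{1}{|E_i|}\int_{\Sigma}(H_i-h)\,g,
\qquad
b(g):=\left.\tfrac{d}{dt}J(E_j^t)\right|_{0}=\tfrac{1}{|E_j|}\int_{\Sigma}(H_i+J(E_j))\,g.
\]

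The optimality $\max\{J(E_i^t),J(E_j^t)\}\geq h$ for all small $t$, with equality at $t=0$, is the heart of the matter. If $J(E_i)>J(E_j)$ strictly, then $\max\{J(E_i^t),J(E_j^t)\}=J(E_i^t)$ for small $t$, so $t=0$ minimizes the smooth function $t\mapsto J(E_i^t)$ and $a(g)=0$ for every admissible $g$; since $g$ ranges over a dense family, $H_i\equiv h$ on the interface. In the borderline case $J(E_i)=J(E_j)=h$, comparing both signs of $t$ shows that minimality of $t\mapsto\max\{J(E_i^t),J(E_j^t)\}$ forces $a(g)\,b(g)\leq 0$ for every $g$. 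I would then use the elementary fact that if two linear functionals $\ell_1,\ell_2$ satisfy $\ell_1(g)\ell_2(g)\leq 0$ for all $g$, then $\ell_1\equiv 0$, or $\ell_2\equiv 0$, or $\ell_2=-c\,\ell_1$ with $c>0$. Applying this to the functionals represented by $H_i-h$ and $H_i+h$, and noting that these representatives differ by the nonzero constant $2h=2h_1(E_i)$, every alternative forces $H_i$ to be constant along $\partial^*(E_iE_j)$. Finally, once $H_i$ is a constant generalized mean curvature, elliptic regularity upgrades the interface to a smooth hypersurface of classical constant mean curvature, which is the claim.

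The main obstacle I anticipate is not the variational computation but the justification that the first-variation formulas and the pointwise notion of mean curvature are legitimate on the reduced contact surface: this relies on the interior regularity of $E_i,E_j$ from Theorem \ref{thm:interiorregularity} and the partition-interface regularity of \cite{gonzalezmassaritamanini}, together with a careful choice of $x_0$ so that the perturbation genuinely transfers volume across the interface while leaving the union, the disjointness, and the remaining cells $E_l$ untouched. The second delicate point is the degenerate case $J(E_i)=J(E_j)$, where the functional is merely the maximum of two smooth quantities and no single Euler–Lagrange equation is available; this is precisely what the product-of-functionals argument resolves.
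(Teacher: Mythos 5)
The paper does not write out a proof of this proposition at all: it states Propositions \ref{prop:meancurvature1} and \ref{prop:meancurvature2} as direct consequences of \cite[Theorem 2]{gonzalezmassaritamanini}, i.e.\ of the classical regularity theory for sets minimizing perimeter under a volume constraint. Your proposal is therefore a genuinely different (and much more explicit) route: a direct first-variation argument in which a compactly supported flow transfers volume across the interface, combined with an elementary lemma on two linear functionals whose product is everywhere nonpositive. The variational core is sound, and the non-degenerate case $J(E_i)>J(E_j)$ even identifies the constant as $h_1(E_i)$, which is consistent with (indeed recovers part of) Proposition \ref{prop1}. Your treatment of the degenerate case $J(E_i)=J(E_j)$ is the real added value: this is exactly where the functional is only a maximum of two smooth quantities and no single Euler--Lagrange equation is available, and the product-of-functionals lemma handles it correctly. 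What the paper's citation buys instead is that all the measure-theoretic localization is delegated to \cite{gonzalezmassaritamanini}; what your argument buys is transparency about where the $2$-adjustment is actually used (cf.\ the Remark following the proposition, which notes the statement fails for merely $1$-adjusted tuples).

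Two points in your write-up are genuine gaps rather than routine verifications. First, the localization: you need a ball $B$ around ($\mathcal H^{N-1}$-a.e.) point of $\partial^*(E_iE_j)$ with $|B\setminus(E_i\cup E_j)|=0$ and $|B\cap E_l|=0$ for $l\neq i,j$, so that $\partial^*E_i\cap B=\partial^*E_j\cap B=\Sigma$ and the flow genuinely exchanges volume between the two cells only. At a point of $\partial^*E_i\cap\partial^*E_j$ one only knows that the \emph{densities} are $\tfrac12+\tfrac12$; this does not by itself exclude a thin gap (or a sliver of another cell) accumulating at the point, in which case the first variations of $P(E_i^t)$, $|E_i^t|$ acquire contributions from free boundary inside $B$ and your formulas for $a(g)$, $b(g)$ are no longer exact. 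Closing this requires an elimination/infiltration-type lemma or precisely the structural regularity of \cite{gonzalezmassaritamanini} that the paper invokes — so this step cannot be treated as a remark. Second, in the degenerate case your conclusion $\ell_2=-c\,\ell_1$ is obtained for perturbations supported in a fixed ball, so the resulting constant value of $H_i$ is a priori local and could vary from component to component of the contact surface. To get a single constant on all of $\partial^*(E_iE_j)$ you must run the linear-functional argument on the full space of admissible normal perturbations (e.g.\ vector fields supported in finite unions of good balls), which is straightforward but needs to be said.
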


\begin{rem}
	Note that the result of Proposition \ref{prop:meancurvature2} can be false for $1$-adjusted Cheeger $k$-tuples, see Fig.~\ref{fig:Fig1}, c).
\end{rem}

Hereinafter, for a $2$-adjusted Cheeger $k$-tuple $(E_1,\dots,E_k)$ of $\Omega$, we denote by $c_{ij}$ the mean curvature of $\partial^* (E_i E_j)$ measured from the inside of $E_i$.

\begin{prop}\label{prop1}
	Let $E_i$ and $E_j$ be any elements of a $2$-adjusted Cheeger $k$-tuple $(E_1,\dots,E_k)$ of $\Omega$ such that $\partial^*(E_iE_j) \neq \emptyset$. Assume that $c_{ij} \geq 0$. 
	Then $h_1(E_i) \geq h_1(E_j)$ and $h_1(E_i) \geq c_{ij}$. Moreover, if $h_1(E_i) > h_1(E_j)$, then $h_1(E_i) = c_{ij}$.
\end{prop}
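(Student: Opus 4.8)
The plan is to use the $2$-adjustment to turn the statement into a coupled Cheeger problem on the ``core'' domain and then to differentiate the ratio $J(E)=P(E)/|E|$ along a one-parameter family that transfers volume across the shared interface $\partial^*(E_iE_j)$.

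First I would record what adjustment buys us. By $1$-adjustment each set is calibrable, so $J(E_i)=h_1(E_i)$ and $J(E_j)=h_1(E_j)$; by $2$-adjustment the pair $(E_i,E_j)$ is a $1$-adjusted Cheeger couple of $\Omega':=\Omega\setminus\bigcup_{l\neq i,j}E_l$, hence it minimizes $\max\{J(\cdot),J(\cdot)\}$ among disjoint pairs in $\Omega'$, the minimal value being $\max\{h_1(E_i),h_1(E_j)\}$. On $\Sigma:=\partial^*(E_iE_j)$ the reduced boundaries are of class $C^{1,\gamma}$ by Theorem \ref{thm:interiorregularity}, and $\Sigma$ has constant mean curvature $c_{ij}$ with respect to the outer normal $\nu_i$ of $E_i$ (measured from the inside of $E_i$) by Proposition \ref{prop:meancurvature2}. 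Now fix $\phi \geq 0$, $\phi \not\equiv 0$, smooth and compactly supported in $\Sigma$, and let $E_i(t),E_j(t)$ be obtained by displacing $\Sigma$ with normal velocity $\phi\,\nu_i$ (the deformation being localized in the interior of $\Omega'$, away from the free boundaries and the other contact surfaces). This transfers volume from $E_j$ to $E_i$ while keeping the pair disjoint and inside $\Omega'$, so it is admissible for the coupled problem.

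Then I would compute the first variations. Writing $A=\int_\Sigma \phi\,d\mathcal{H}^{N-1}>0$, the first variation of volume gives $\frac{d}{dt}\big|_0|E_i(t)|=A$ and $\frac{d}{dt}\big|_0|E_j(t)|=-A$, while the first variation of perimeter, using the constancy of $c_{ij}$ and the fact that $\Sigma$ is a common piece of both boundaries (so the orientation flips cancel), gives $\frac{d}{dt}\big|_0 P(E_i(t))=\frac{d}{dt}\big|_0 P(E_j(t))=c_{ij}A$. Consequently,
\[\frac{d}{dt}\Big|_0 J(E_i(t))=\frac{A}{|E_i|}\big(c_{ij}-h_1(E_i)\big),\qquad \frac{d}{dt}\Big|_0 J(E_j(t))=\frac{A}{|E_j|}\big(c_{ij}+h_1(E_j)\big).\]
The crucial observation is that $c_{ij}\geq 0$ together with $h_1(E_j)>0$ forces $\frac{d}{dt}\big|_0 J(E_j(t))>0$: pushing the interface into $E_j$ strictly increases $J(E_j)$.

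Finally I would read off the three assertions from the minimality of $t\mapsto \max\{J(E_i(t)),J(E_j(t))\}$ at $t=0$. If $h_1(E_i)<h_1(E_j)$, then for $t$ near $0$ the maximum equals $J(E_j(t))$ and $t=0$ is an interior minimizer, so its derivative must vanish --- impossible, since it is strictly positive; hence $h_1(E_i)\geq h_1(E_j)$. If $h_1(E_i)>h_1(E_j)$, the maximum equals $J(E_i(t))$ near $t=0$, whence $\frac{d}{dt}\big|_0 J(E_i(t))=0$, that is $c_{ij}=h_1(E_i)$. If $h_1(E_i)=h_1(E_j)$, inspecting both signs of $t$ and using the positivity of the $E_j$-derivative shows that the $E_i$-derivative must be $\leq 0$, i.e.\ $c_{ij}\leq h_1(E_i)$. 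Together these give $h_1(E_i)\geq c_{ij}$ in all cases and $c_{ij}=h_1(E_i)$ whenever $h_1(E_i)>h_1(E_j)$. The main obstacle is making this first-variation step rigorous for sets of finite perimeter: one must build the interface-transferring deformation only on the reduced contact surface $\partial^*(E_iE_j)$, where the $C^{1,\gamma}$ regularity of Theorem \ref{thm:interiorregularity} applies, verify that the perturbed pair stays admissible for the coupled problem on $\Omega'$, and justify the volume and perimeter first-variation formulas together with the constancy of $c_{ij}$. The sign and orientation bookkeeping (which set grows, and why the two perimeter variations coincide since $\Sigma$ is shared) is delicate but routine; the conceptual heart is simply the strict positivity of the $E_j$-derivative, which is precisely where the hypothesis $c_{ij}\geq 0$ is used.
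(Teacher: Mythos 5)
Your proposal is correct and follows essentially the same route as the paper: perturb the contact surface $\partial^*(E_iE_j)$, use its constant mean curvature $c_{ij}$ together with calibrability from $1$-adjustment to compute how $P/|\cdot|$ changes for both sets, and derive all three claims from the minimality of the pair for $h_2\bigl(\Omega\setminus\bigcup_{m\neq i,j}E_m\bigr)$ guaranteed by $2$-adjustment. The only difference is organizational --- you package the paper's three separate $\pm\varepsilon\varphi$ graph-perturbation contradictions into a single one-parameter normal deformation and read the conclusions off the signs of the one-sided derivatives, with the same key observation that shrinking $E_j$ strictly increases $J(E_j)$ when $c_{ij}\geq 0$.
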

\begin{proof}
	Let us show first that $h_1(E_i) \geq c_{ij}$. 
	If $c_{ij}=0$, then the result is obvious. Assume that $c_{ij} > 0$.
	Suppose, by contradiction, that $h_1(E_i) < c_{ij}$.
	Take any $x \in \partial^*(E_iE_j)$. By Theorem \ref{thm:interiorregularity} \ref{thm:interiorregularity:i}, there is a neighborhood of $x$ where $\partial^*(E_iE_j)$ can be described as a graph of a function $u \in C^{1,\gamma}(\omega)$, where $\omega$ is an open subset of $\mathbb{R}^{N-1}$.
	Let us take any $\varphi \in C_0^\infty(\omega)\setminus\{0\}$ such that $\varphi \geq 0$, and let us perturb $\partial^*(E_iE_j)$ by $-\varepsilon \varphi$ for $\varepsilon > 0$ small enough, so that we obtain two new sets $E_i^\varepsilon$ and $E_j^\varepsilon$. The quotient between the perimeter and the volume of $E_i^\varepsilon$ satisfies the relation
	\begin{align*}
	\notag
	\frac{P(E_i^\varepsilon)}{|E_i^\varepsilon|} 
	&= \frac{\left(P(E_i) - \int_\omega \sqrt{1+|\nabla u|^2} \, dx\right) + \int_\omega \sqrt{1+|\nabla(u-\varepsilon \varphi)|^2} \, dx}{\left(|E_i| - \int_{\omega} u \, dx \right) + \int_{\omega} (u- \varepsilon \varphi) \, dx} \\
	\label{eq:prop1}
	&\approx 
	\frac{P(E_i) - \varepsilon \int_\omega\frac{\nabla u \nabla \varphi}{\sqrt{1+|\nabla u|^2}}\, dx}{|E_i| - \varepsilon \int_{\omega} \varphi \, dx}.
	\end{align*}
	Since $\partial^*(E_iE_j)$ has a constant mean curvature $c_{ij}$, we have
	$$
	\int_\omega\frac{\nabla u \nabla \varphi}{\sqrt{1+|\nabla u|^2}}\, dx = c_{ij} \int_\omega \varphi \, dx > 0.
	$$
	Therefore, recalling that $\frac{P(E_i)}{|E_i|} = h_1(E_i) < c_{ij}$, we easily deduce that
	\begin{equation}\label{eq:min:1}
	\frac{P(E_i^\varepsilon)}{|E_i^\varepsilon|} \approx
	\frac{P(E_i) - \varepsilon c_{ij} \int_\omega \varphi \, dx}{|E_i| - \varepsilon \int_{\omega} \varphi \, dx} < \frac{P(E_i)}{|E_i|}.
	\end{equation}	
	On the other hand, applying the same perturbation $-\varepsilon \varphi$ to $E_j$, we see that $P(E_j^\varepsilon)$ decreases and $|E_j^\varepsilon|$ increases, which implies that 
	\begin{equation}\label{eq:min:2}
	\frac{P(E_j^\varepsilon)}{|E_j^\varepsilon|} < \frac{P(E_j)}{|E_j|}.
	\end{equation}
	However, \eqref{eq:min:1} and \eqref{eq:min:2} contradict the fact that $(E_i, E_j)$ is a minimizer of 
	$h_2\left(\Omega \setminus \bigcup_{m \neq i,j} E_m\right)$, since $(E_1,\dots,E_k)$ is a $2$-adjusted Cheeger $k$-tuple of $\Omega$. Therefore, $h_1(E_i) \geq c_{ij}$.
	
	Let us show now that $h_1(E_i) \geq h_1(E_j)$. Suppose, by contradiction, that $h_1(E_i) < h_1(E_j)$. 
	Then $h_2\left(\Omega \setminus \bigcup_{m \neq i,j} E_m\right) = h_1(E_j)$. However, applying the perturbation $-\varepsilon \varphi$ as above, we see from \eqref{eq:min:2} that
	\begin{equation}\label{eq:min:3}
	h_2\biggl(\Omega \setminus \bigcup_{m \neq i,j} E_m\biggr) \leq \max\left\{\frac{P(E_i^\varepsilon)}{|E_i^\varepsilon|}, \frac{P(E_j^\varepsilon)}{|E_j^\varepsilon|}\right\} = \frac{P(E_j^\varepsilon)}{|E_j^\varepsilon|} < h_1(E_j)
	\end{equation}
	for sufficiently small $\varepsilon > 0$. 	A contradiction.
	
	Let us show finally that if $h_1(E_i) > h_1(E_j)$, then $h_1(E_i) = c_{ij}$. Suppose, by contradiction, that $h_1(E_i) > c_{ij}$. Using the perturbation argument as above, but with a positive perturbation $\varepsilon \varphi$ we get
	\begin{align*}
	\frac{P(E_i^\varepsilon)}{|E_i^\varepsilon|} & =\frac{\left(P(E_i) - \int_\omega \sqrt{1+|\nabla u|^2} \, dx\right) + \int_\omega \sqrt{1+|\nabla(u+\varepsilon \varphi)|^2} \, dx}{\left(|E_i| - \int_{\omega} u \, dx \right) + \int_{\omega} (u+ \varepsilon \varphi) \, dx} \\ & 
	\approx \frac{P(E_i) + \varepsilon c_{ij} \int_\omega \varphi \, dx}{|E_i| + \varepsilon \int_{\omega} \varphi \, dx} < \frac{P(E_i)}{|E_i|}
	\end{align*}
	for sufficiently small $\varepsilon > 0$. However, we again get a contradiction as in \eqref{eq:min:3}, since $(E_1,\dots,E_k)$ is a $2$-adjusted Cheeger $k$-tuple of $\Omega$.
\end{proof}

\section{Spectral minimal partitions}\label{sec:spectral_minimal_partitions}
In this section we show that $h_k(\Omega)$ can be characterized as a limit of the energy $\mathfrak{L}(p;\Omega)$ of the \textit{spectral minimal partition} of $\Omega$ with respect to the $p$-Laplacian as $p \to 1$. 
Along the whole section, we will assume that $\Omega \subset \mathbb{R}^N$ is a bounded open set. 

Let $E$ be a measurable subset of $\Omega$. We define the first eigenvalue of the $p$-Laplacian on $E$ as
$$
\lambda_1(p;E) := \inf\left\{\frac{\int_E |\nabla u|^p \, dx}{\int_E |u|^p \, dx}:~ u \in W_0^{1,p}(\Omega) \setminus \{0\},~ u=0 \text{ a.e.\ on } \Omega \setminus E \right\},
$$
and we put $\lambda_1(p;E) = +\infty$ whenever the admissible set of functions is empty. 
Since the constraint $u=0$ a.e.\ on $\Omega \setminus E$ is weakly compact, we get the existence of a minimizer (i.e., eigenfunction) of $\lambda_1(p;E)$. 
Note that if $E$ is open and $\partial E$ is continuous, then $\lambda_1(p;E)$ is the usual first eigenvalue of the $p$-Laplacian on $E$, cf.\ \cite[Theorem 5.29]{adams}.

Let us define $\mathfrak{L}(p;\Omega)$ as
\begin{equation*}
\mathfrak{L}_k(p;\Omega) := \inf_{(E_1,\dots,E_k) \in \mathcal{E}_k} \max_{i = 1,\dots,k} \lambda_1(p;E_i).
\end{equation*}
A $k$-tuple $(E_1,\dots,E_k) \in \mathcal{E}_k$ which delivers a minimum to $\mathfrak{L}_k(p;\Omega)$ is called a $k$-th spectral minimal partition of $\Omega$. 
We start with the existence result for $\mathfrak{L}(p;\Omega)$. 
\begin{prop}\label{prop:properties}
	$\mathfrak{L}_k(p;\Omega)$ is attained for any $k \in \mathbb{N}$ and $p > 1$, that is, there exists a $k$-th spectral minimal partition of $\Omega$.	
\end{prop}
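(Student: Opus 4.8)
The plan is to establish existence by the direct method of the calculus of variations, working with a minimizing sequence of $k$-tuples and extracting a limit whose components remain disjoint with positive measure. First I would take a minimizing sequence $(E_1^m,\dots,E_k^m) \in \mathcal{E}_k$ for $\mathfrak{L}_k(p;\Omega)$, so that $\max_i \lambda_1(p;E_i^m) \to \mathfrak{L}_k(p;\Omega)$. For each $i$ and $m$, let $u_i^m \in W_0^{1,p}(\Omega)$ be an eigenfunction realizing $\lambda_1(p;E_i^m)$, normalized by $\|u_i^m\|_{L^p(\Omega)} = 1$ and vanishing a.e.\ on $\Omega \setminus E_i^m$. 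Since the sequence of eigenvalues is bounded, the Dirichlet energies $\int_\Omega |\nabla u_i^m|^p\,dx = \lambda_1(p;E_i^m)$ are bounded, so $\{u_i^m\}_m$ is bounded in $W_0^{1,p}(\Omega)$. By reflexivity of $W_0^{1,p}(\Omega)$ for $p>1$ and the compact Sobolev embedding $W_0^{1,p}(\Omega) \hookrightarrow\hookrightarrow L^p(\Omega)$ (valid since $\Omega$ is bounded), I can pass to a subsequence so that $u_i^m \rightharpoonup u_i$ weakly in $W_0^{1,p}(\Omega)$ and $u_i^m \to u_i$ strongly in $L^p(\Omega)$, for every $i$ simultaneously.

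Next I would identify the limiting partition and control its energy. Define $E_i := \{u_i \neq 0\}$. Strong $L^p$ convergence gives $\|u_i\|_{L^p(\Omega)} = 1$, so in particular $|E_i| > 0$ for each $i$. Weak lower semicontinuity of the convex functional $u \mapsto \int_\Omega |\nabla u|^p\,dx$ yields
\[
\int_\Omega |\nabla u_i|^p\,dx \leq \liminf_{m\to\infty} \int_\Omega |\nabla u_i^m|^p\,dx = \liminf_{m\to\infty}\lambda_1(p;E_i^m).
\]
Because $u_i$ vanishes a.e.\ outside $E_i$ and $\|u_i\|_{L^p}=1$, the function $u_i$ is admissible in the variational problem defining $\lambda_1(p;E_i)$, whence $\lambda_1(p;E_i) \leq \int_\Omega|\nabla u_i|^p\,dx \leq \mathfrak{L}_k(p;\Omega)$ for every $i$. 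Taking the maximum over $i$ then shows $\max_i \lambda_1(p;E_i) \leq \mathfrak{L}_k(p;\Omega)$.

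The main obstacle, and the step requiring the most care, is ensuring that the limiting sets remain pairwise disjoint, so that $(E_1,\dots,E_k) \in \mathcal{E}_k$ and the bound above is not vacuous. The disjointness of the $E_i^m$ forces the supports of the eigenfunctions to be essentially disjoint; I would exploit this by noting that $u_i^m\,u_j^m = 0$ a.e.\ for $i \neq j$, a relation that passes to the limit under strong $L^p$ (hence a.e.\ along a further subsequence) convergence, giving $u_i\,u_j = 0$ a.e.\ and therefore $|E_i \cap E_j| = 0$. If the resulting sets overlap only on a null set, I would replace them by genuinely disjoint representatives (for instance, assigning each point to a single index via an ordering of the $|u_i|$), which changes neither the measures nor the eigenvalues since $\lambda_1(p;\cdot)$ depends only on the admissible class of functions vanishing outside the set up to null sets. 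This yields an admissible competitor $(E_1,\dots,E_k) \in \mathcal{E}_k$ with $\max_i \lambda_1(p;E_i) \leq \mathfrak{L}_k(p;\Omega)$; combined with the opposite inequality from the definition of the infimum, equality holds and the partition is a minimizer, completing the proof.
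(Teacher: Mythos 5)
Your proof is correct and follows essentially the same route as the paper: the direct method applied to the associated eigenfunctions, with normalization in $L^p$, weak compactness in $W_0^{1,p}(\Omega)$, strong $L^p$ and a.e.\ convergence to preserve the disjointness constraint $u_i u_j = 0$, and weak lower semicontinuity of the Dirichlet energy. The only cosmetic difference is that the paper first introduces the auxiliary functional problem $\widetilde{\mathfrak{L}}_k(p;\Omega)$ over $k$-tuples of functions and then identifies it with $\mathfrak{L}_k(p;\Omega)$, whereas you run the same argument directly on a minimizing sequence of set-tuples.
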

\begin{proof}
	First we introduce the following auxiliary minimization problem:
	\begin{equation*}\label{def:Lktilde}
	\widetilde{\mathfrak{L}}_k(p;\Omega) := 
	\inf\left\{ 
	\mathcal{L}(u_1, \dots, u_k):~
	u_i \in W_0^{1,p}(\Omega) \setminus \{0\},~ u_i \cdot u_j = 0 \text{ a.e.\ on } \Omega \text{ for } i \neq j
	\right\},
	\end{equation*}
	where the functional $\mathcal{L}: (W_0^{1,p}(\Omega))^k \to \mathbb{R}$ is defined by 
	$$
	\mathcal{L}(u_1, \dots, u_k) := \max\left\{ \frac{\int_\Omega |\nabla u_1|^p \, dx}{\int_\Omega |u_1|^p \, dx}, \dots, \frac{\int_\Omega |\nabla u_k|^p \, dx}{\int_\Omega |u_k|^p \, dx} \right\}.
	$$
	We claim that for any $p>1$ and $k \in \mathbb{N}$ there exists a minimizer $(\varphi_1, \dots, \varphi_k) \in (W_0^{1,p}(\Omega))^k$ for $\widetilde{\mathfrak{L}}_k(p;\Omega)$, where each $\varphi_i \not\equiv 0$ in $\Omega$. 
	Let $(\varphi_1^n, \dots, \varphi_k^n) \in (W_0^{1,p}(\Omega))^k$, $n \in \mathbb{N}$, be a minimizing sequence for $\widetilde{\mathfrak{L}}_k(p;\Omega)$. Let $C > 0$ be such that $\mathcal{L}(\varphi_1^n, \dots, \varphi_k^n) < C$ for all $n \in \mathbb{N}$.
	Due to the $0$-homogeneity of $\mathcal{L}$, we can assume that $\|\varphi_i^n\|_{L^p(\Omega)} = 1$ for each $i \in \{1,\dots,k\}$ and $n \in \mathbb{N}$. 
	Therefore, we obtain that $\|\nabla \varphi_i^n\|_{L^p(\Omega)} < C$ for each $i \in \{1,\dots,k\}$ and $n \in \mathbb{N}$. Hence, by a diagonal argument, we can find a subsequence $(\varphi_1^{n_j}, \dots, \varphi_k^{n_j})$, $j \in \mathbb{N}$, and a vector $(\varphi_1, \dots, \varphi_k) \in (W_0^{1,p}(\Omega))^k$ such that each $\varphi_i^{n_j} \to \varphi_i$ weakly in $W_0^{1,p}(\Omega)$, strongly in $L^p(\Omega)$, and almost everywhere in $\Omega$ as $j \to +\infty$. 
	Thus, since $\|\varphi_i^n\|_{L^p(\Omega)} = 1$, we conclude that $\|\varphi_i\|_{L^p(\Omega)} = 1$, that is, $\varphi_i \not\equiv 0$ in $\Omega$. Moreover, due to a.e.-convergence, we deduce that $\varphi_i \cdot \varphi_j = 0$ a.e.\ on $\Omega$ for $i \neq j$. 
	Therefore, $(\varphi_1, \dots, \varphi_k)$ is an admissible vector for $\widetilde{\mathfrak{L}}_k(p;\Omega)$. Finally, considering $m \in \{1,\dots,k\}$ such that 
	$$
	\max_{i = 1,\dots,k} \|\nabla \varphi_i\|_{L^p(\Omega)}^p = \|\nabla \varphi_m\|_{L^p(\Omega)}^p
	$$ 
	and noting that 
	\begin{align*}
	\widetilde{\mathfrak{L}}_k(p;\Omega) 
	&\leq \mathcal{L}(\varphi_1, \dots, \varphi_k) 
	=
	\|\nabla \varphi_m\|_{L^p(\Omega)}^p \leq \liminf_{j \to +\infty} \|\nabla \varphi_m^{n_j}\|_{L^p(\Omega)}^p \\
	&\leq 
	\liminf_{j \to +\infty} \max_{i = 1,\dots,k} \|\nabla \varphi_i^{n_j}\|_{L^p(\Omega)}^p = 
	\liminf_{j \to +\infty} \mathcal{L}(\varphi_1^{n_j}, \dots, \varphi_k^{n_j})
	= \widetilde{\mathfrak{L}}_k(p;\Omega),
	\end{align*}
	we conclude that $(\varphi_1, \dots, \varphi_k)$ is a minimizer of $\widetilde{\mathfrak{L}}_k(p;\Omega)$. 
	
	Evidently, we can assume that each $\varphi_i \geq 0$ a.e.\ on $\Omega$. Let us denote $E_i = \{x \in \Omega: \varphi_i(x) > 0\}$ for $i \in \{1,\dots,k\}$. Obviously, each $|E_i|>0$. Moreover, since $\varphi_i \cdot \varphi_j = 0$ a.e.\ on $\Omega$ for $i \neq j$, we get $E_i \cap E_j = \emptyset$ a.e.\ on $\Omega$. 
	Therefore, we can assume that $(E_1,\dots,E_k) \in \mathcal{E}_k$ and hence $\mathfrak{L}_k(p;\Omega) \leq \widetilde{\mathfrak{L}}_k(p;\Omega)$. 
	The opposite inequality $\widetilde{\mathfrak{L}}_k(p;\Omega) \leq \mathfrak{L}_k(p;\Omega)$ is obvious by the definitions. 
	Thus, we conclude that $\mathfrak{L}_k(p;\Omega) = \widetilde{\mathfrak{L}}_k(p;\Omega)$ and a minimizer of $\widetilde{\mathfrak{L}}_k(p;\Omega)$ defines a $k$-th spectral minimal partition of $\Omega$.	
\end{proof}

	\begin{rem}
		Evidently, $\mathfrak{L}_1(p;\Omega) = \lambda_1(p;\Omega)$. Moreover, using the variational characterization of $\lambda_2(p;\Omega)$ (see, e.g., \cite{garciaazoreroperal} or \cite{drabrob1999}), it is not hard to obtain that $\mathfrak{L}_2(p;\Omega) = \lambda_2(p;\Omega)$.
	\end{rem}

	The result of Proposition \ref{prop:properties} can be refined in the following way. Let us recall several definitions. Under  \textit{$p$-capacity} of a measurable set $E \subset \Omega$ we mean
	$$
	\capp(E) = \inf\left\{ \int_\Omega |\nabla u|^p \, dx:~ u \in W_0^{1,p}(\Omega),~ u \geq 1 \text{ a.e.\ in a neighborhood of } E\right\}.
	$$
	A subset $E$ of $\Omega$ is called \textit{$p$-quasi-open} if for every $\varepsilon > 0$ there exists an open subset $E_\varepsilon$ of $\Omega$, such that $\capp(E_\varepsilon \bigtriangleup E) < \varepsilon$. If some abstract property $G(x)$ is satisfied for all $x \in \Omega$ except, possibly, for elements of a set $Z \subset \Omega$ with $\capp(Z) = 0$, we say that $G(x)$ is satisfied \textit{$p$-quasi-everywhere} on $\Omega$, or $p$-q.e.\ on $\Omega$, for short. Note that $\capp(Z) = 0$ implies $\mathcal{H}^{N-1}(Z) = 0$ and hence $|Z| = 0$, cf.\ \cite[Section 4.7.2, Theorem 4]{evans}.
	
	Consider now the subclass $\mathcal{E}_k^* \subset \mathcal{E}_k$ of $k$-tuples $(E_1,\dots,E_k)$ where each $E_i$ is $p$-quasi-open, and define the quantity
	\begin{equation*}\label{def:Lk}
	\mathfrak{L}_k^*(p;\Omega) := \inf_{(E_1,\dots,E_k) \in \mathcal{E}_k^*} \max_{i = 1,\dots,k} \lambda_1(p;E_i).
	\end{equation*}
	\begin{lem}\label{lem:pqo}
		$\mathfrak{L}_k^*(p;\Omega) = \mathfrak{L}_k(p;\Omega)$ for any $k \in \mathbb{N}$ and $p>1$, that is, there exists a $p$-quasi-open $k$-th spectral minimal partition of $\Omega$.
	\end{lem}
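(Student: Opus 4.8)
The inequality $\mathfrak{L}_k^*(p;\Omega) \geq \mathfrak{L}_k(p;\Omega)$ is immediate, since $\mathcal{E}_k^* \subset \mathcal{E}_k$ and the infimum defining $\mathfrak{L}_k^*(p;\Omega)$ is taken over a smaller class. The plan is therefore to establish the reverse inequality $\mathfrak{L}_k^*(p;\Omega) \leq \mathfrak{L}_k(p;\Omega)$ by exhibiting an admissible $p$-quasi-open $k$-tuple whose energy does not exceed $\mathfrak{L}_k(p;\Omega)$; this will in fact produce a $p$-quasi-open minimal partition. First I would invoke the proof of Proposition \ref{prop:properties} to obtain a minimizer $(\varphi_1,\dots,\varphi_k) \in (W_0^{1,p}(\Omega))^k$ of $\widetilde{\mathfrak{L}}_k(p;\Omega)$ with $\varphi_i \geq 0$, $\varphi_i \not\equiv 0$, $\varphi_i \cdot \varphi_j = 0$ a.e.\ for $i \neq j$, and $\mathcal{L}(\varphi_1,\dots,\varphi_k) = \mathfrak{L}_k(p;\Omega)$. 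For each $i$ I would replace $\varphi_i$ by its $p$-quasi-continuous representative $\tilde\varphi_i$ and set $\tilde E_i := \{\tilde\varphi_i > 0\}$. The key structural fact, standard in $p$-capacity theory, is that superlevel sets of quasi-continuous functions are $p$-quasi-open; hence each $\tilde E_i$ belongs to the admissible class of $p$-quasi-open subsets of $\Omega$.

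Next I would upgrade the almost-everywhere disjointness to a capacitary one. Since $\tilde\varphi_i \tilde\varphi_j$ is quasi-continuous and vanishes a.e., it vanishes $p$-q.e., so that $\capp(\tilde E_i \cap \tilde E_j) = \capp(\{\tilde\varphi_i\tilde\varphi_j > 0\}) = 0$ for $i \neq j$. Collecting these $p$-polar intersections into $Z := \bigcup_{i\neq j}(\tilde E_i \cap \tilde E_j)$, which satisfies $\capp(Z) = 0$ and hence $|Z| = 0$, I would put $\hat E_i := \tilde E_i \setminus Z$. The sets $\hat E_i$ are then genuinely mutually disjoint; each remains $p$-quasi-open because it differs from the $p$-quasi-open set $\tilde E_i$ by a set of zero $p$-capacity; and $|\hat E_i| = |\tilde E_i| > 0$. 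Thus $(\hat E_1,\dots,\hat E_k) \in \mathcal{E}_k^*$.

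Finally I would estimate the energy. Since $\tilde\varphi_i = \varphi_i$ a.e.\ and $|Z| = 0$, the set $\hat E_i$ agrees up to a Lebesgue-null set with $\{\varphi_i > 0\}$, so $\varphi_i = 0$ a.e.\ on $\Omega \setminus \hat E_i$ and $\varphi_i$ is admissible in the definition of $\lambda_1(p;\hat E_i)$. This gives $\lambda_1(p;\hat E_i) \leq \int_\Omega |\nabla\varphi_i|^p\,dx \big/ \int_\Omega |\varphi_i|^p\,dx$, whence, taking the maximum over $i$, $\max_i \lambda_1(p;\hat E_i) \leq \mathcal{L}(\varphi_1,\dots,\varphi_k) = \mathfrak{L}_k(p;\Omega)$. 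Consequently $\mathfrak{L}_k^*(p;\Omega) \leq \mathfrak{L}_k(p;\Omega)$, which combined with the trivial inequality yields equality; moreover $(\hat E_1,\dots,\hat E_k)$ realizes $\mathfrak{L}_k(p;\Omega)$ within $\mathcal{E}_k^*$, so it is a $p$-quasi-open $k$-th spectral minimal partition.

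I expect the main obstacle to be the careful handling of the capacitary disjointness: justifying that $\tilde\varphi_i\tilde\varphi_j$ is quasi-continuous, that a quasi-continuous function vanishing a.e.\ vanishes $p$-q.e., and that excising the $p$-polar set $Z$ preserves both $p$-quasi-openness and the value of $\lambda_1(p;\cdot)$. Once the standard identification of superlevel sets of quasi-continuous Sobolev functions with $p$-quasi-open sets is invoked, the remaining steps are essentially bookkeeping.
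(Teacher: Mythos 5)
Your proposal is correct and follows essentially the same route as the paper: both pass to the $p$-quasi-continuous representatives of a minimizer of $\widetilde{\mathfrak{L}}_k(p;\Omega)$ from Proposition \ref{prop:properties}, observe that the superlevel sets are $p$-quasi-open, and upgrade the a.e.\ disjointness to $p$-q.e.\ disjointness. Your explicit excision of the $p$-polar set $Z$ is a slightly more careful rendering of the paper's ``we can assume that $(E_1,\dots,E_k) \in \mathcal{E}_k^*$'', but it is the same argument.
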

	\begin{proof}
	We prove that $\mathfrak{L}_k^*(p;\Omega) = \widetilde{\mathfrak{L}}_k(p;\Omega)$ where $\widetilde{\mathfrak{L}}_k(p;\Omega)$ is defined in Proposition \ref{prop:properties}. 
	Let $(\varphi_1,\dots\varphi_k)$ by a minimizer of $\widetilde{\mathfrak{L}}_k(p;\Omega)$.
	We can assume that each $\varphi_i \geq 0$ a.e.\ on $\Omega$.
	Moreover, we can identify each $\varphi_i$ with its $p$-quasi-continuous representative (cf.\ \cite[Section~3.3, Theorem~3.3.3]{ziemer}), that is, we can assume that for every $\varepsilon > 0$ there exists a continuous function $\varphi_i^\varepsilon$ such that $\capp(\{\varphi_i \neq \varphi_i^\varepsilon\}) < \varepsilon$. Therefore, denoting $E_i = \{x \in \Omega: \varphi_i(x) > 0\}$, we see that each $E_i$ is a $p$-quasi-open subset of $\Omega$. 
	Moreover, since $\varphi_i \cdot \varphi_j = 0$ a.e.\ on open $\Omega$ for $i \neq j$, we derive from \cite{heinonen} that $\varphi_i \cdot \varphi_j = 0$ $p$-q.e.\ on $\Omega$ and hence $E_i \cap E_j = \emptyset$ $p$-q.e.\ on $\Omega$.
	Therefore, we can assume that $(E_1, \dots, E_k) \in \mathcal{E}_k^*$ and hence $\mathfrak{L}_k^*(p;\Omega) \leq \widetilde{\mathfrak{L}}_k(p;\Omega)$. 
	The opposite inequality $\widetilde{\mathfrak{L}}_k(p;\Omega) \leq \mathfrak{L}_k^*(p;\Omega)$ is trivial. 	
	Thus, we proved that $\mathfrak{L}_k^*(p;\Omega) = \mathfrak{L}_k(p;\Omega)$.
	\end{proof}
	
\medskip
Now we prove the main result of this section. 
\begin{thm}\label{thm:1}
	$h_k(\Omega) = \lim\limits_{p \to 1} \mathfrak{L}_k(p;\Omega)$ for any $k \in \mathbb{N}$.
\end{thm}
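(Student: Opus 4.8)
The plan is to establish the two one-sided estimates $\liminf_{p\to1}\mathfrak{L}_k(p;\Omega)\geq h_k(\Omega)$ and $\limsup_{p\to1}\mathfrak{L}_k(p;\Omega)\leq h_k(\Omega)$ separately; together they give the equality. The lower bound comes from a Cheeger-type inequality applied piecewise to an optimal partition, whereas the upper bound rests on the interior smooth approximation of Proposition~\ref{prop:approximation} combined with the single-domain limit of Kawohl and Fridman \cite{kawohlfridman}.

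For the lower bound, fix $p>1$ and let $(E_1,\dots,E_k)\in\mathcal{E}_k$ be a $k$-th spectral minimal partition, which exists by Proposition~\ref{prop:properties}. I would first record the pointwise Cheeger inequality
\[
\lambda_1(p;E)\geq\left(\frac{h_1(E)}{p}\right)^p,
\]
valid for any measurable $E$: for a nonnegative $u\in W_0^{1,p}(\Omega)$ vanishing a.e.\ on $\Omega\setminus E$, the coarea formula applied to $u^p$ gives $\int_\Omega|\nabla(u^p)|\,dx=\int_0^\infty P(\{u^p>t\})\,dt\geq h_1(E)\int_\Omega u^p\,dx$, since each superlevel set $\{u^p>t\}$ is (up to a null set) contained in $E$; combining this with $\int_\Omega|\nabla(u^p)|\,dx=p\int_\Omega u^{p-1}|\nabla u|\,dx$ and Hölder's inequality yields the claim. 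Applying it to each piece and using that $(E_1,\dots,E_k)$ is admissible for $h_k(\Omega)$, I obtain
\[
\mathfrak{L}_k(p;\Omega)=\max_i\lambda_1(p;E_i)\geq\left(\frac{\max_i h_1(E_i)}{p}\right)^p\geq\left(\frac{h_k(\Omega)}{p}\right)^p,
\]
and letting $p\to1^+$ gives $\liminf_{p\to1}\mathfrak{L}_k(p;\Omega)\geq h_k(\Omega)$.

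For the upper bound, I would start from a $1$-adjusted Cheeger $k$-tuple $(E_1,\dots,E_k)$ for $h_k(\Omega)$, whose existence is ensured by Theorem~\ref{thm:main1}, selecting open representatives by Theorem~\ref{thm:interiorregularity}\,\ref{thm:interiorregularity:iv}. Proposition~\ref{prop:approximation} then produces, for each $i$, smooth sets $E_{i,m}\subset\subset E_i$ with $E_{i,m}\to E_i$ in $L^1(\Omega)$ and $P(E_{i,m})\to P(E_i)$. As the $E_i$ are disjoint, so are the $E_{i,m}$, and each is compactly contained in $\Omega$; hence $(E_{1,m},\dots,E_{k,m})\in\mathcal{E}_k$ is an admissible competitor and $\mathfrak{L}_k(p;\Omega)\leq\max_i\lambda_1(p;E_{i,m})$. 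Since each $E_{i,m}$ is a smooth bounded domain on which $\lambda_1(p;E_{i,m})$ is the genuine Dirichlet eigenvalue, the result of \cite{kawohlfridman} gives $\lim_{p\to1}\lambda_1(p;E_{i,m})=h_1(E_{i,m})\leq P(E_{i,m})/|E_{i,m}|$, so that $\limsup_{p\to1}\mathfrak{L}_k(p;\Omega)\leq\max_i P(E_{i,m})/|E_{i,m}|$ for every fixed $m$. Letting $m\to\infty$ and using $P(E_{i,m})\to P(E_i)$ together with $|E_{i,m}|\to|E_i|$, the right-hand side converges to $\max_i P(E_i)/|E_i|=h_k(\Omega)$, closing the argument.

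The main obstacle is the legitimacy of the interior approximation for a general bounded open $\Omega$. Proposition~\ref{prop:approximation} requires $\mathcal{H}^{N-1}(\partial\Omega\setminus\partial^*\Omega)=0$ and open $E_i$, which need not hold without further regularity of $\partial\Omega$; moreover the perimeter convergence $P(E_{i,m})\to P(E_i)$ genuinely uses the tangential contact of $\partial E_i$ with $\partial^*\Omega$ from Theorem~\ref{thm:interiorregularity}\,\ref{thm:interiorregularity:v}, so one cannot simply push the sets inward and expect no perimeter to be lost at the boundary. I would resolve this either by an inner exhaustion $\Omega_n\subset\subset\Omega$ by smooth domains, reducing to the regular case via the monotonicity $\mathfrak{L}_k(p;\Omega)\leq\mathfrak{L}_k(p;\Omega_n)$ and $h_k(\Omega)\leq h_k(\Omega_n)$ and the continuity $h_k(\Omega_n)\to h_k(\Omega)$, or by verifying directly that the sets of a $1$-adjusted tuple admit interior smooth approximations with perimeter convergence under the weaker hypotheses at hand. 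Establishing this continuity of $h_k$ under inner exhaustion is the delicate technical point of the whole proof.
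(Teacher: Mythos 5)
Your proposal is correct and follows the same overall architecture as the paper's proof: the lower bound via the Cheeger inequality $\lambda_1(p;E)\geq\left(h_1(E)/p\right)^p$ applied to each cell of a spectral minimal partition (the paper carries out exactly the coarea/H\"older computation you sketch, with the extra care of checking that $P(F_t;\Omega)=P(F_t;\mathbb{R}^N)$ for the superlevel sets when the $E_i$ are merely measurable), and the upper bound via the interior smooth approximation of Proposition~\ref{prop:approximation} applied to a $1$-adjusted Cheeger $k$-tuple from Theorem~\ref{thm:main1}. The one genuine divergence is the final step of the upper bound: where you invoke the Kawohl--Fridman limit $\lambda_1(p;E_{i,m})\to h_1(E_{i,m})$ on each smooth approximant and then bound $h_1(E_{i,m})\leq P(E_{i,m})/|E_{i,m}|$, the paper instead constructs explicit Lipschitz test functions equal to $1$ on $E_{i,m}$ and supported in an $\varepsilon$-neighbourhood, estimating the Rayleigh quotient through the Minkowski-content expansion $|E_{i,m}^\varepsilon\setminus E_{i,m}|=\varepsilon P(E_{i,m})+o(\varepsilon)$ and sending $p\to1$, $\varepsilon\to0$, $m\to\infty$ in that order; this keeps the argument self-contained but amounts to re-deriving the same single-domain estimate, so the two routes are interchangeable. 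Finally, the obstacle you flag at the end is real but is not actually resolved in the paper either: the published proof invokes Proposition~\ref{prop:approximation} and the open representatives of Theorem~\ref{thm:interiorregularity}~\ref{thm:interiorregularity:iv} without verifying the hypotheses $\mathcal{H}^{N-1}(\partial\Omega\setminus\partial^*\Omega)=0$ and $\mathcal{H}^{N-1}(\partial\Omega)<+\infty$ for a general bounded open $\Omega$, so your proposed inner-exhaustion remedy is a supplement to the paper's argument rather than a deviation from it.
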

\begin{proof}
We follow the strategy of \cite[Section~5]{Parini}. 
Let us fix an arbitrary $k \in \mathbb{N}$. We show first that 
\begin{equation}\label{eq:limit1}
\limsup\limits_{p \to 1} \mathfrak{L}_k(p;\Omega) \leq h_k(\Omega).
\end{equation}
Let $(E_1, \dots, E_k) \in \mathcal{E}_k$ be a $1$-adjusted Cheeger $k$-tuple of $\Omega$ obtained by Theorem \ref{thm:main1}. 
By Proposition \ref{prop:approximation}, we can approximate each $E_i$ by a sequence of sets of finite perimeter $\{E_{i,m}\}_{m \in \mathbb{N}}$ with smooth boundary, compactly contained in $E_i$, such that
\[ 
\frac{P(E_{i,m})}{|E_{i,m}|}\leq h_k(\Omega) + \frac{1}{m}.
\]
For $\varepsilon > 0$, define
$$
E_{i,m}^\varepsilon := \{x \in \mathbb{R}^N:~ \text{dist}(x, E_{i,m}) \leq \varepsilon\}.
$$
Let $m \in \mathbb{N}$ be fixed. If $\varepsilon > 0$ is sufficiently small in order to have $E_{i,m}^\varepsilon \subset \subset E_i$ for each $i \in \{1,\dots, k\}$, we define functions $\{v_{i,m}\}_{i=1}^k \subset W_0^{1,p}(\Omega)$ such that $v_{i,m} = 1$ on $E_{i,m}$, $v_{i,m} = 0$ on $\Omega \setminus E_{i,m}^\varepsilon$ and $|\nabla v_{i,m}|  = \varepsilon^{-1}$ on $E_{i,m}^\varepsilon \setminus E_{i,m}$. 
Then we have 
$$
\mathfrak{L}_k(p;\Omega) 
\leq \max_{i=1,\dots,k} \lambda_1(p;E_i)
\leq \max_{i=1,\dots,k} \frac{\int_\Omega |\nabla v_{i,m}|^p \, dx}{\int_\Omega |v_{i,m}|^p \, dx} 
\leq \max_{i=1,\dots,k} \frac{\varepsilon^{-p}|{E}_{i,m}^\varepsilon \setminus E_{i,m}|}{|E_{i,m}|}.
$$
Noting that $|{E}_{i,m}^\varepsilon \setminus E_{i,m}| = \varepsilon P(E_{i,m}) + o(\varepsilon)$, where $o(\varepsilon)/\varepsilon \to 0$ as $\varepsilon \to 0$ (cf. \cite[Corollary~1]{Ambrosio}), and that the volume of the sets $E_{i,m}$ is uniformly bounded from below, we conclude that
$$
\mathfrak{L}_k(p;\Omega) 
\leq \max_{i=1,\dots,k} \frac{\varepsilon^{1-p}P(E_{i,m}) + \varepsilon^{-p} o(\varepsilon)}{|E_{i,m}|} 
\leq \frac{1}{\varepsilon^{p-1}}\left( h_k(\Omega) + \frac{1}{m}\right) + \frac{o(\varepsilon)}{\varepsilon^{p}}.
$$
Therefore, for each $m \in \mathbb{N}$ we have
$$
\limsup_{p \to 1}\mathfrak{L}_k(p;\Omega) 
\leq h_k(\Omega) + \frac{1}{m} + \frac{o(\varepsilon)}{\varepsilon}.
$$
Letting first $\varepsilon \to 0$, and then $m \to +\infty$, we derive \eqref{eq:limit1}.

Let us show now that
\begin{equation}\label{eq:limit2}
\liminf\limits_{p \to 1} \mathfrak{L}_k(p;\Omega) \geq h_k(\Omega).
\end{equation}
From Proposition~\ref{prop:properties} we know that for any $p>1$ there exists a spectral minimal partition $(E_1,\dots,E_k) \in \mathcal{E}_k$ for $\Omega$, that is, $\mathfrak{L}_k(p;\Omega) = \max\limits_{i=1,\dots,k} \lambda_1(p;E_i)$.
Suppose for a moment that the following lower estimate is valid for each $i \in \{1,\dots,k\}$:
\begin{equation}\label{eq:Cheeger_inequality}
\lambda_1(p;E_i) \geq \left(\frac{h_1(E_i)}{p}\right)^p.
\end{equation}
Then for any $i \in \{1,\dots,k\}$ we get
$$
\mathfrak{L}_k(p;\Omega) 
\geq \lambda_1(p;E_i) \geq \left(\frac{h_1(E_i)}{p}\right)^p,
$$
and hence
$$
\mathfrak{L}_k(p;\Omega) \geq \left(\frac{1}{p}\max\limits_{i=1,\dots,k} h_1(E_i)\right)^p \geq 
\left(\frac{h_k(\Omega)}{p}\right)^p,
$$
where the second inequality is obtained by the definition \eqref{hkmax2}.
Letting now $p \to 1$, we get the desired lower bound \eqref{eq:limit2}. Combining it with \eqref{eq:limit1}, we conclude finally that 
$$
\lim_{p \to 1} \mathfrak{L}_k(p;\Omega) = h_k(\Omega).
$$

Let us now prove the estimate \eqref{eq:Cheeger_inequality}. 
Note that \eqref{eq:Cheeger_inequality} is valid for bounded open sets, see \cite[Appendix]{Lefton}. However, in general, our $E_i$'s are only measurable (or $p$-quasi-open by Lemma \ref{lem:pqo}).
We will detalize the proof of \cite{Lefton} in order to cover our case.
Let $v \in W_0^{1,p}(\Omega) \setminus \{0\}$ be a minimizer of $\lambda_1(p;E_i)$. Denoting $\Phi(v) := |v|^{p-1}v$, we get
\begin{equation}\label{eq:Phiv}
\int_\Omega |\nabla \Phi(v)| \, dx = p \int_\Omega |v|^{p-1} |\nabla v| \, dx \leq
p \left(\int_\Omega |v|^p \, dx \right)^\frac{p-1}{p}
\left(\int_\Omega |\nabla v|^p \, dx \right)^\frac{1}{p},
\end{equation}
which implies	 that $\Phi(v) \in W_0^{1,1}(\Omega)$.
Since $v \equiv 0$ a.e.\ on $\Omega \setminus E_i$ and we can assume that $v \geq 0$ a.e.\ on $\Omega$, we have $\Phi(v) \equiv 0$ a.e.\ on $\Omega \setminus E_i$ and $\Phi(v) \geq 0$ a.e.\ on $\Omega$. 
Therefore, denoting 
$$
F_t := \{x \in \Omega:~ \Phi(v(x)) > t\} \quad \text{for} \quad t \geq 0,
$$
and arguing as in the proof of \cite[Corollary 2.2.3]{kesavan}, we see that $P(F_t; \Omega) = P(F_t; \mathbb{R}^N) =: P(F_t)$ for all $t > 0$. Consequently, using the co-area formula (cf.\ \cite[Theorem 2.2.1 and Corollary 2.2.1]{kesavan}) with the layer-cake representation, and noting that $F_t \subset E_i$ for $t>0$, we derive 
\begin{align*}\label{eq:co-area1}
\int_{E_i} |\nabla \Phi(v)| \, dx 
&= \int_{\Omega} |\nabla \Phi(v)| \, dx 
= \int_{0}^{+\infty} P(F_t) \,dt 
= \int_{0}^{+\infty} \frac{P(F_t)}{|F_t|} \, |F_t| \,dt \\
&	\geq \inf_{t > 0} \frac{P(F_t)}{|F_t|} \int_{0}^{+\infty} |F_t| \,dt
\geq \inf_{D \subset E_i} \frac{P(D)}{|D|} \int_{\Omega} |\Phi(v)| \, dx 
= h_1(E_i) \int_{E_i} |v|^p \, dx.
\end{align*}
Finally, applying the inequality \eqref{eq:Phiv}, we get
$$
h_1(\Omega_i) \leq p \frac{\left(\int_{E_i} |\nabla v|^p \, dx \right)^\frac{1}{p}}{\left(\int_{E_i} |v|^p \, dx \right)^\frac{1}{p}} = p \left(\lambda_1(p;E_i)\right)^\frac{1}{p}, 
$$
and hence \eqref{eq:Cheeger_inequality} follows.
\end{proof}

\section{Applications}\label{sec:applications}

\subsection{Second Cheeger constant for a ring}\label{sec:ring}
Let $\Omega$ be a concentric ring in $\mathbb{R}^2$. We can assume, without loss of generality, that $\Omega = B_1 \setminus \overline{B_R}$, where $B_1$ and $B_R$ are open concentric discs with radii $1$ and $R$, respectively, and $R \in (0,1)$. 
We start with a discussion of a configuration for coupled Cheeger sets in $\Omega$ which is empirically optimal. We will call it \textit{reference configuration}. 
Then we prove its optimality rigorously. 

\subsubsection{Reference configuration}\label{sec:reference_configuration}
Let $\Omega' := \Omega \cap \{(x,y) \in \mathbb{R}^2: y > 0\}$ be the upper half-ring corresponding to $\Omega$, see Fig.~\ref{fig:Fig0}. 
Let us compute $h_1(\Omega')$. 
For this end, we consider a set $\mathcal{O}_r$ obtained by rolling a disc with fixed radius $r \in \left(0, \frac{1-R}{2}\right]$ inside of $\Omega'$, i.e.,
\begin{equation}\label{eq:Or}
\mathcal{O}_r = \bigcup_{x \in [\Omega']^r} B_r(x),
\end{equation}
where $[\Omega']^r$ is the inner parallel set to $\Omega'$ at distance $r$, that is,
$$
[\Omega']^r := \{x \in \Omega':~ \text{dist}\,(x, \partial \Omega') \geq r\}.
$$
Denote the quotient perimeter/area of $\mathcal{O}_r$ by $\mathcal{F}(r)$, i.e.,
\begin{equation*}\label{eq:ring_referenceFraction}
\mathcal{F}(r) =
\frac{|\text{Arc}(B_1 B_1')| + |\text{Arc}(B_2 B_2')| + 2|\text{Arc}(A_1 B_1)| + 2|\text{Arc}(A_2 B_2)| + 2|A_1 D|}{|\Omega'| - 2|A_1 B_1 C_1| - 2|A_2 B_2 C_2|},
\end{equation*}
where for the numerator we have
\begin{align*}
&|\text{Arc}(B_1 B_1')| = r \left( \pi - 2\arcsin\left(\frac{r}{1-r}\right) \right),
&|\text{Arc}(B_2 B_2')| = r \left( \pi - 2\arcsin\left(\frac{r}{R+r}\right) \right);\\
&|\text{Arc}(A_1 B_1)| = r \left( \frac{\pi}{2} + \arcsin\left(\frac{r}{1-r}\right) \right),
&|\text{Arc}(A_2 B_2)| = r \left( \frac{\pi}{2} - \arcsin\left(\frac{r}{R+r}\right) \right);\\
&|A_1 D| = |O A_1| - |O A_2| = \sqrt{1-2r} - \sqrt{R(2r+R)},
\end{align*}
and for the denominator we have
\begin{align*}
|\Omega'| & = \frac{\pi(1-R^2)}{2},\\
|A_1 B_1 C_1| 
&= |C_1 O B_1| - |A_1 O_1 B_1| - |A_1 O O_1| \\
&= \frac{1}{2}\arcsin\left(\frac{r}{1-r}\right) - \frac{r^2}{2} \left(\frac{\pi}{2} + \arcsin\left(\frac{r}{1-r}\right)\right) - \frac{r}{2}\sqrt{1-2r}
\end{align*}
and
\begin{align*}
|A_2 B_2 C_2| 
&= |O_2 O A_2| - |B_2 O C_2| - |A_2 O_2 B_2| \\
&= \frac{r}{2}\sqrt{R(2r+R)} - \frac{R^2}{2}\arcsin\left(\frac{r}{R+r}\right) - \frac{r^2}{2} \left(\frac{\pi}{2} - \arcsin\left(\frac{r}{R+r}\right)\right).
\end{align*}

\begin{lem}\label{lem:h1(Omega')}
	$h_1(\Omega') = \min\limits_{r \in \left[0, \frac{1-R}{2}\right]} \mathcal{F}(r)$. Moreover, a minimizer of $h_1(\Omega')$ is unique and given by $\mathcal{O}_{r_0}$ with some $r_0 \in \left(0, \frac{1-R}{2}\right)$.
\end{lem}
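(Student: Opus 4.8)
The plan is to establish the two inequalities between $h_1(\Omega')$ and $\min_{r} \mathcal{F}(r)$ and then to read off both uniqueness and interior attainment from the structure of the optimal set. Since $\mathcal{F}$ is continuous on the compact interval $\left[0,\frac{1-R}{2}\right]$, the minimum on the right-hand side is attained, so it is legitimate to write $\min$ rather than $\inf$. The upper bound is immediate: for every $r \in \left(0,\frac{1-R}{2}\right]$ the set $\mathcal{O}_r$ defined in \eqref{eq:Or} is a subset of $\Omega'$ of positive measure, hence an admissible competitor in the definition of the Cheeger constant, whence $h_1(\Omega') \le \frac{P(\mathcal{O}_r)}{|\mathcal{O}_r|} = \mathcal{F}(r)$; minimizing over $r$ yields $h_1(\Omega') \le \min_{r} \mathcal{F}(r)$.

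For the reverse inequality I would work with the structure of an actual Cheeger set. A Cheeger set $C \subseteq \Omega'$ exists by the direct method and, being calibrable, it is a $1$-adjusted Cheeger $1$-tuple of $\Omega'$; by the reflection symmetry of $\Omega'$ across the $y$-axis it may be taken symmetric. Set $r_0 := 1/h_1(\Omega')$. By Theorem \ref{thm:interiorregularity} the free boundary of $C$ is of class $C^{1,\gamma}$ and meets $\partial \Omega'$ tangentially, while Proposition \ref{prop:meancurvature1} (with $N=2$, so that the mean curvature coincides with the planar curvature $h_1(C) = 1/r_0$) shows that $\partial_f C$ is a finite union of circular arcs of radius $r_0$ with centre of curvature in $C$. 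The decisive step is then to identify $C$ with $\mathcal{O}_{r_0}$: I would argue that such an $r_0$-arc can appear only to round one of the four convex corners of $\Omega'$ located at $(\pm 1,0)$ and $(\pm R,0)$, whereas along the two straight segments, the outer arc of $\partial B_1$, and the concave inner arc of $\partial B_R$ the boundary of $C$ must coincide with $\partial \Omega'$. A rolling and tangency argument at each corner then forces every free arc to be internally tangent to the two boundary pieces meeting there, which is precisely the description of the disc of radius $r_0$ rolled inside $\Omega'$; hence $C = \mathcal{O}_{r_0}$ and $\min_{r} \mathcal{F}(r) \le \mathcal{F}(r_0) = h_1(\Omega')$.

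Combining the two bounds gives $h_1(\Omega') = \mathcal{F}(r_0) = \min_{r} \mathcal{F}(r)$, so $r_0$ is a minimizer of $\mathcal{F}$; the optimality relation $\mathcal{F}'(r_0) = 0$ is moreover equivalent to the consistency identity $r_0 = 1/\mathcal{F}(r_0)$ between the geometric radius of the corner arcs and the Cheeger radius $1/h_1(\Omega')$, which is exactly what makes $\mathcal{O}_{r_0}$ self-consistent. To place the minimizer in the open interval I would inspect the endpoints, checking that $\mathcal{F}$ is decreasing as $r \to 0^+$ and that $\mathcal{F}'\!\left(\frac{1-R}{2}\right)$ has the opposite sign, so that $r_0 \in \left(0,\frac{1-R}{2}\right)$; uniqueness of $r_0$, and hence of the minimizing set $\mathcal{O}_{r_0}$, then follows from strict unimodality of $\mathcal{F}$ on this interval, which can be checked from its explicit expression.

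The hard part will be the identification $C = \mathcal{O}_{r_0}$: one must rule out any free-boundary arc other than the four corner-rounding ones, in particular excluding configurations in which $C$ detaches from the concave inner boundary $\partial B_R$ or from the outer boundary $\partial B_1$. This is where either a careful comparison and sliding argument, exploiting the symmetry of $C$ and the tangency from Theorem \ref{thm:interiorregularity}\ref{thm:interiorregularity:v}, or an appeal to a known structure theorem for Cheeger sets of planar domains will be required.
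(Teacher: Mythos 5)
Your upper bound $h_1(\Omega') \le \min_r \mathcal{F}(r)$ is exactly the paper's first step and is fine. The problem is the reverse direction: the identification of the Cheeger set with $\mathcal{O}_{r_0}$ is the entire content of the lemma, and your proposal does not actually prove it. The regularity results of Section \ref{sec:regularity} give you that $\partial_f C$ consists of circular arcs of radius $1/h_1(\Omega')$ with concavity towards $C$, but from there the claim that ``such an $r_0$-arc can appear only to round one of the four convex corners, whereas along the straight segments, $\partial B_1$, and $\partial B_R$ the boundary of $C$ must coincide with $\partial\Omega'$'' is precisely what needs to be established, and your ``rolling and tangency argument'' does not rule out the genuinely dangerous configurations: a free arc detaching $C$ from the concave inner arc $\partial B_R$ entirely, a free arc with both endpoints on $\partial B_1$ cutting off a circular-segment-like region, or a disconnected Cheeger set. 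You flag this yourself and suggest appealing to ``a known structure theorem for Cheeger sets of planar domains''---that is exactly what the paper does: it verifies that $\Omega'$ has \emph{no necks of radius $r$} for every $r$ (trivially for $r > \frac{1-R}{2}$ since no such ball fits, and by inspection for $r \le \frac{1-R}{2}$) and then invokes \cite[Theorem 1.4]{saracco}, which delivers in one stroke that the Cheeger set is $\mathcal{O}_{r_0}$ with $r_0 = 1/h_1(\Omega')$, that it is unique, and that $r_0 > 0$. Without that citation (or a full substitute for it) your argument has a genuine gap at its decisive step.

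Two further points. First, your uniqueness argument is insufficient even granting the identification: strict unimodality of $\mathcal{F}$ would only give uniqueness \emph{within the one-parameter family} $\{\mathcal{O}_r\}$, not among all competitors, and verifying unimodality of the explicit (and rather unpleasant) function $\mathcal{F}$ is itself nontrivial; the paper never needs it, since uniqueness comes from the same theorem of \cite{saracco}. Second, for excluding the endpoint $r_0 = \frac{1-R}{2}$ the paper does not differentiate $\mathcal{F}$: it observes that $r_0 = \frac{1-R}{2}$ would force $h_1(\Omega') = \frac{2}{1-R}$ (the curvature consistency $r_0 = 1/h_1(\Omega')$ you correctly identify) and then checks by direct computation that $\mathcal{F}\left(\frac{1-R}{2}\right) > \frac{2}{1-R}$, a contradiction. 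This is cleaner and more robust than the sign analysis of $\mathcal{F}'$ at the endpoints that you propose.
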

\begin{proof}
	Evidently, each $\mathcal{O}_r$ with $r \in \left[0, \frac{1-R}{2}\right]$ is an admissible set for the minimization problem $h_1(\Omega')$, that is,
	\begin{equation*}\label{eq:h1_upper_bound}
	h_1(\Omega') \leq \mathcal{F}(r_0) := \min_{r \in \left[0, \frac{1-R}{2}\right]} \mathcal{F}(r).
	\end{equation*}
	To show that $h_1(\Omega') = \mathcal{F}(r_0)$, let us recall the following definition from \cite{saracco}. 
	It is said that $\Omega'$ \textit{has no necks of radius} $r$ if for any two balls $B_r(x_0), B_r(x_1) \subset \Omega'$ there exists a continuous curve $\gamma: [0, 1] \to \Omega'$ such that
	$$
	\gamma(0) = x_0,
	\quad 
	\gamma(1) = x_1, 
	\quad \text{and} \quad 
	B_r(\gamma(t)) \subset \Omega' 
	\quad \text{for all} \quad  t \in [0, 1].
	$$
	It is not hard to observe that $\Omega'$ satisfies this property for all $r \in \left[0, \frac{1-R}{2}\right]$. Moreover, this property is also satisfied for all $r > \frac{1-R}{2}$ since $\Omega'$ contains no ball of such radius $r$. 
	Applying now \cite[Theorem 1.4]{saracco}, we conclude that the Cheeger set of $\Omega'$ is given by $\mathcal{O}_{r_0}$, that is, $h_1(\Omega') = \mathcal{F}(r_0)$. Moreover, $\mathcal{O}_{r_0}$ is a unique minimizer of $h_1(\Omega')$. 
	
	Let us show that $r_0 \in \left(0, \frac{1-R}{2}\right)$. The case $r_0 = 0$ is impossible, since otherwise we get a contradiction to \cite[Theorem 1.4]{saracco}.
	On the other hand, if $r_0 = \frac{1-R}{2}$, then we must have $h_1(\Omega') = \frac{2}{1-R}$, cf.\ \cite[Proposition 3.5 (iv)]{leonardi} or Proposition \ref{prop:meancurvature1}. However, we see that 
	\begin{equation*}\label{eq:ring_contr0}
	h_1(\Omega') = \mathcal{F}\left(\frac{1-R}{2}\right) = 
	\frac{2}{1-R} \cdot \frac{4\pi - 4(1+R) \arcsin\left(\frac{1-R}{1+R}\right)}{\pi (3+R) - 4 (1+R) \arcsin\left(\frac{1-R}{1+R}\right)} > \frac{2}{1-R} = h_1(\Omega'),
	\end{equation*}
	which is impossible.
\end{proof}

\begin{figure}[ht]
	\begin{minipage}[t]{0.49\linewidth}
		\centering
		\includegraphics[width=1\linewidth]{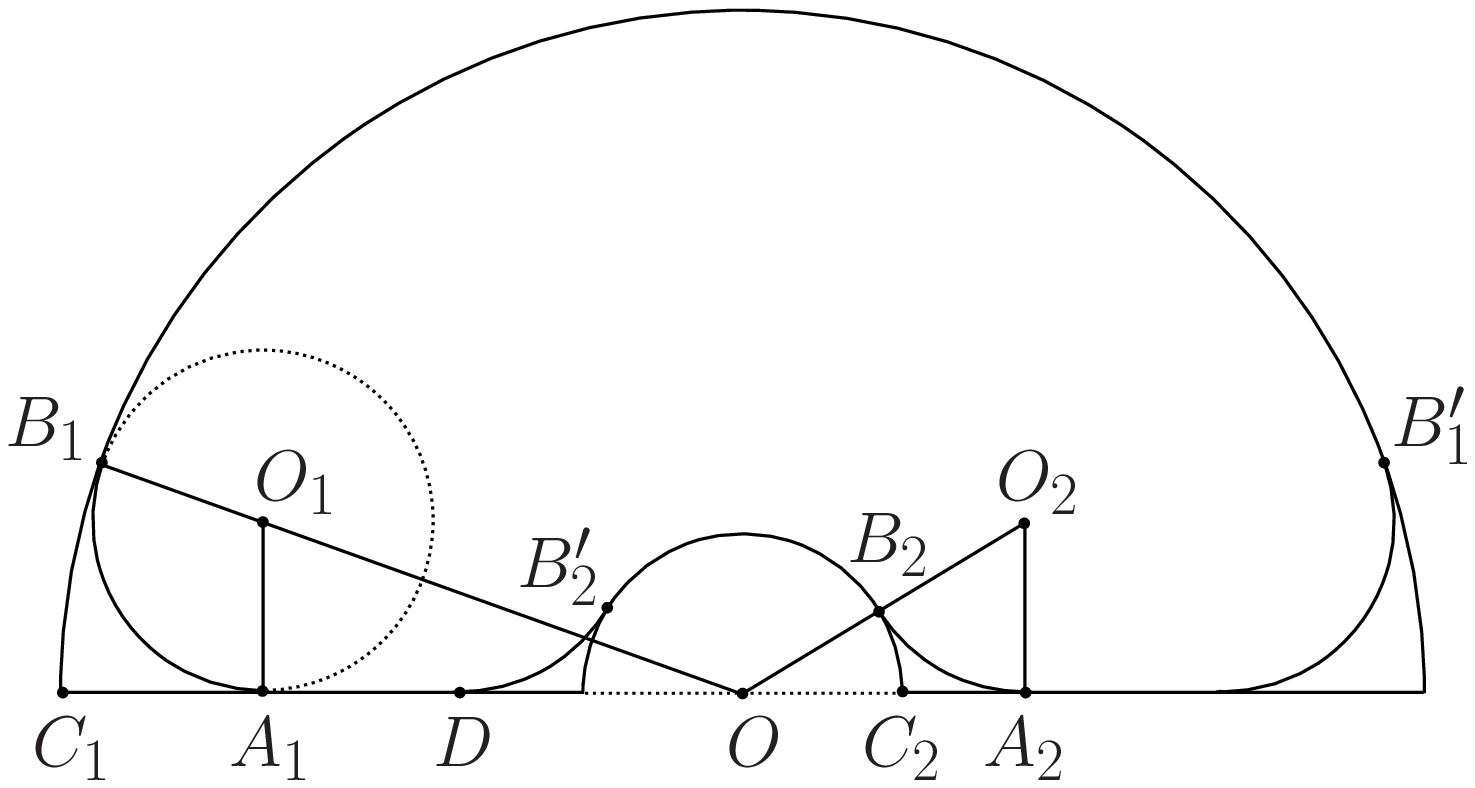}
		\caption{}
		\label{fig:Fig0}
	\end{minipage}
	\hfill
	\begin{minipage}[t]{0.49\linewidth}
		\centering
		\includegraphics[width=0.96\linewidth]{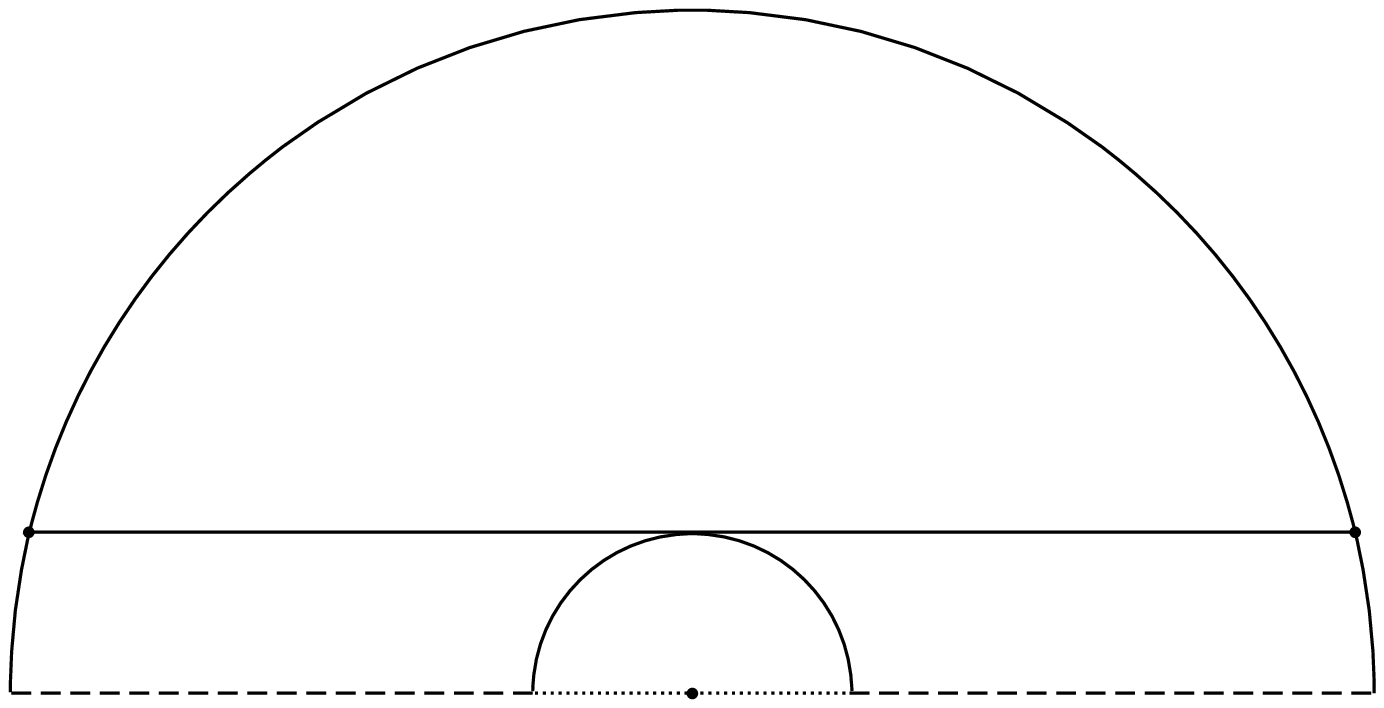}
		\caption{}
		\label{fig:Fig4}
	\end{minipage}
\end{figure}

\subsubsection{Optimality}\label{sec:reference_configuration_optimal}
Let us show that the reference configuration is indeed a minimizer of $h_2(\Omega)$ and it is unique up to rotation. 
Denote by $(E_1, E_2) \in \mathcal{E}_2$ any $1$-adjusted Cheeger couple of $\Omega$, that is, $(E_1, E_2)$ is a minimizer of $h_2(\Omega)$ and 
$$
h_1(\Omega \setminus E_1) = h_1(E_2) = \frac{P(E_2)}{|E_2|}, 
\quad 
h_1(\Omega \setminus E_2) = h_1(E_1) = \frac{P(E_1)}{|E_1|}.
$$
Note that $(E_1, E_2)$ exists by Theorem~\ref{thm:main1}. Moreover, evidently, $(E_1, E_2)$ is $2$-adjusted.
We assume that each $E_i$ has only one connected component.
Throughout the proof we will frequently use the following properties of $E_1$ and $E_2$ which follow from Theorem \ref{thm:interiorregularity} and Propositions \ref{prop:meancurvature1} and \ref{prop:meancurvature2}:
\begin{itemize}
	\item $\partial E_i \subset \overline{\Omega}$ is $C^1$-smooth.
	\item The mean curvatures of the free boundaries $\partial_f E_1$ and $\partial_f E_2$ are constant. We will denote them as $c_1$ and $c_2$, respectively.
	\item The mean curvature of the contact surface $\partial(E_1 E_2)$ is constant. We will denote it as $c_{12}$.
	\item $\partial E_i$ consists of arcs of circles and $\partial E_i = \partial_f E_i	\cup \partial_b E_i \cup \partial (E_1E_2)$.
	\item If $\partial (E_1E_2)$ is not closed, then each end-point of $\partial (E_1E_2)$ lies inside $\Omega$ and
	gives rise to exactly two arcs of free boundaries $\partial_f E_i$. 
	\item Each end-point of $\partial_f E_i$ lies either on $\partial B_1$ or on $\partial B_R$ or coincides with an end-point of $\partial (E_1E_2)$.
\end{itemize}
We will say that a part of $\partial E_i$ has \textit{nonempty interior} if this part is not empty and not discrete.

The proof will be performed through the following steps:
\begin{enumerate}[label={\rm(\roman*)}]
	\item\label{proof:annuli:i}		Show that $\partial(E_1 E_2)$ has nonempty interior. 
	\item\label{proof:annuli:ii}	Show that $\partial E_1 \cap \partial B_R$ or $\partial E_2 \cap \partial B_R$ has nonempty interior. 
	\item\label{proof:annuli:iii}	Show that $\partial(E_1 E_2)$ does not have closed connected components. 
	\item\label{proof:annuli:iv}	Show that $c_1 = c_2$. 
	\item\label{proof:annuli:v}		Show that $c_{12} = 0$.
	\item\label{proof:annuli:vi}	Show the optimality of the reference configuration. 
\end{enumerate}

\ref{proof:annuli:i} We start with proving that the contact surface between $E_1$ and $E_2$ has nonempty interior. 
Suppose the claim was false. Thus, we have
\begin{equation}\label{eq:e1e2}
\partial E_1 = \overline{\partial_f E_1}
\cup \partial_b E_1
\quad \text{and} \quad 
\partial E_2 = \overline{\partial_f E_2}
\cup \partial_b E_2,
\end{equation}
where the closure is taken with respect to the relative topology. Note that $\partial_f E_1$ and $\partial_f E_2$ have nonempty interiors. Indeed, if we suppose that, say, the interior of $\partial_f E_1$ is empty, then $\partial E_1 = \partial_b E_1$. This readily yields $\partial E_1 = \partial \Omega$ and $E_1 = \Omega$, and hence $E_2 = \emptyset$, which is impossible. By the same reasoning, $\partial_f E_2$ also has nonempty interior.

Let us show that $\partial_f E_1$ does not have closed connected components. 
Suppose, by contradiction, that there is a closed connected component $S$ of $\partial_f E_1$. Then, recalling that the curvature of $\partial_f E_1$ is a positive constant $c_1$, we see that $S$ is a circle. 
Denoting by $B_0$ a ball such that $\partial B_0 = S$, we obtain $E_1 \subset B_0$. 
There are two possible positions for $B_0$: either $B_0 \subset \Omega$ or $B_R \subset B_0$. 
If $B_0 \subset \Omega$, then we get a contradiction since such a configuration is not optimal. Indeed, a maximal ball which can be inscribed in $\Omega$ has radius $\frac{1-R}{2}$. Moreover, $\Omega$ contains at least two such balls. Therefore, it is not hard to see that in the best configuration with $E_1 \subset B_0$ it will hold $E_1 = B_0$ and $\text{radius}(B_0) = \frac{1-R}{2}$, that is,
$$
h_2(\Omega) = \max\{h_1(B_0), h_1(\Omega \setminus B_0)\} = h_1(B_0) = \frac{2}{\text{radius}(B_0)} = \frac{4}{1-R}.
$$
However, taking two half-rings as a pair of admissible sets for $h_2(\Omega)$, we easily obtain a contradiction:
\begin{equation}\label{eq:ring_contr1}
h_2(\Omega) \leq 2\, \frac{\pi(1+R) + 2(1-R)}{\pi(1-R^2)} < \frac{4}{1-R} = h_2(\Omega).
\end{equation}
Suppose now that $B_R \subset B_0$. 
Then it is not hard to see that $E_2 \subset B_1 \setminus B_0$. 
Therefore, in view of \eqref{eq:e1e2}, we see that $E_2$ must be a ball, and the contradiction follows as in \eqref{eq:ring_contr1}.
Thus, $\partial_f E_1$ does not have closed connected components. 
Analogously, the same conclusion holds for $\partial_f E_2$.

Fix now any connected component of $\partial_f E_1$.
Since this component is not closed and it is an arc of a circle of radius $\frac{1}{c_1}$, and $\partial(E_1 E_2)$ has empty interior by the assumption, we see that this component must touch both $\partial B_1$ and $\partial B_R$. Thus, either $\frac{1}{c_1} = \frac{1-R}{2}$ or $\frac{1}{c_1} = \frac{1+R}{2}$. However, the latter case is impossible due to the $C^1$-smoothness of $\partial E_1$.
Therefore, we conclude that $h_1(E_1) = c_1 = \frac{2}{1-R}$. 
Let us show that in this case the best configuration for $E_1$ and $E_2$ will be given by $\mathcal{O}_{\frac{1-R}{2}}$, see \eqref{eq:Or}. 
For this end, we study the behaviour of $\partial_f E_1$ and $\partial_f E_2$, see Fig.~\ref{fig:Fig2}.
Let an arc $PQ \subset \partial_f E_1$ be such that $P \in \partial B_1$ and $Q \in \partial B_R$. 
If there exists an arc $PP_1 \subset \partial_f E_1$ such that $P_1 \in \Omega \setminus PQ$, or an arc $QQ_1 \subset \partial_f E_1$ such that $Q_1 \in \Omega \setminus PQ$, then, recalling that $\partial(E_1E_2)$ has empty interior, this arcs can be prolonged such that $P_1=Q$ or $Q_1 = P$, and we obtain a contradiction as in \eqref{eq:ring_contr1}.
Hence, there are arcs $PR \subset \partial E_1 \cap \partial B_1$ and $QS \subset \partial E_1 \cap \partial B_R$. 
Moreover, noting that the same holds true for $\partial E_2$, we conclude that $PR \not\equiv \partial B_1$ and $QS \not\equiv \partial B_R$, and we can take $R$ and $S$ such that $RS \subset \partial_f E_1$. Thus, we see that $E_1$ and $E_2$ have the shapes as depicted in Fig.~\ref{fig:Fig2}, and at least one of $E_i$ is contained in a half-ring of $\Omega$. It is not hard to show that the best configuration for each $E_i$ will be given, up to rotation, by $\mathcal{O}_{\frac{1-R}{2}}$ (see \eqref{eq:Or}). However, $\mathcal{O}_{\frac{1-R}{2}}$ is not optimal, as it follows from Lemma \ref{lem:h1(Omega')}. 
Therefore, we conclude that $\partial(E_1 E_2)$ has nonempty interior.

\ref{proof:annuli:ii} Assume, without loss of generality, that $c_{12} \geq 0$. 
Let us show that $\partial E_1 \cap \partial B_R$ has nonempty interior. 
Suppose, by contradiction, that the interior of $\partial E_1 \cap \partial B_R$ is empty. 
Since the curvature of $\partial_f E_1$ is $c_1 > 0$ and the curvature of $\partial E_1 \cap \partial B_1$ equals $1$ (whenever these parts have nonempty interiors), and the curvature of $\partial (E_1 E_2)$ is $c_{12} \geq 0$, we conclude that $E_1$ has a piecewise nonnegative curvature. Combining this fact with the $C^1$-smoothness of $\partial E_1$, we conclude that $E_1$ is convex. 
Evidently, the largest convex set which can be inscribed in $\Omega$ is a circular segment as on Fig.~\ref{fig:Fig4}. 
Therefore, $E_1$ must be contained inside such segment and, consequently, strictly inside a half-ring. But this fact contradicts Lemma~\ref{lem:h1(Omega')}. 

Note that if $c_{12} = 0$, then the above arguments can be applied to both $E_1$ and $E_2$ to conclude that $\partial E_1 \cap \partial B_R$ and $\partial E_2 \cap \partial B_R$ have nonempty interiors. 

\ref{proof:annuli:iii} Let us now prove that the contact surface between $E_1$ and $E_2$ does not have closed connected components. 
Suppose the assertion is false. 
Since the curvature of $\partial(E_1 E_2)$ is a constant $c_{12}$, there exists a connected component $S$ of $\partial(E_1 E_2)$ which is a circle. 
Let $B_0$ be a ball such that $\partial B_0 = S$. 
We again have two possibilities: either $B_0 \subset \Omega$ or $B_R \subset B_0$. 
If $B_0 \subset \Omega$, then we get a contradiction as in \eqref{eq:ring_contr1}. 
Suppose now that $B_R \subset B_0$. 
Since each $E_i$ has only one connected component, we can assume, without loss of generality, that $E_1 \subset B_0 \setminus \overline{B_R}$ and $E_2 \subset B_1 \setminus \overline{B_0}$.  
Let us show that, in fact,  $E_1 = B_0 \setminus \overline{B_R}$ and $E_2 = B_1 \setminus \overline{B_0}$.
Suppose, by contradiction, that, say, $E_2$ has a free boundary. Since $\partial B_0 \subset \partial (E_1E_2)$ and $\partial E_2$ is $C^1$-smooth, we see that there is no arc of $\partial_f E_2$ with an end-point on $\partial B_0$. Hence, the only possibility is that $\partial_f E_2$ is a circle $\partial B_s$ such that $\partial B_s \cap \Omega$ is of angle $2\pi$, which contradicts \cite[Lemma~2.11]{leonardipratelli}.
Similarly, we can conclude that $E_1 = B_0 \setminus \overline{B_R}$. 
Varying the radius of $B_0$, it is easy to see that the best configuration is achieved when $h_1(E_1) = h_1(E_2)$. Denoting the radius of $B_0$ as $s$ and noting that 
$$
h_1(E_1) = \frac{P(B_0 \setminus \overline{B_R})}{|B_0 \setminus \overline{B_R}|} = \frac{2}{s-R},
\quad 
h_1(E_2) = \frac{P(B_1 \setminus \overline{B_0})}{|B_1 \setminus \overline{B_0}|} = \frac{2}{1-s},
$$
we obtain that $s = \frac{1+R}{2}$, and hence $h_2(\Omega) = \frac{4}{1-R}$. The contradiction then follows as in \eqref{eq:ring_contr1}.
Thus, $\partial(E_1 E_2)$ does not have closed connected components. 

\ref{proof:annuli:iv} As a consequence of step \ref{proof:annuli:iii}, we see that the free boundaries of $E_1$ and $E_2$ are not empty. 
Let us prove that $c_1 = c_2$. 
Suppose, without loss of generality, that $c_{12} \geq 0$. 
Then we get from step \ref{proof:annuli:ii} that $\partial E_1 \cap \partial B_R$ has nonempty interior. 
We know from Proposition \ref{prop1} that $c_1 \geq c_2$ and $c_{1} \geq c_{12}$, and if $c_1 > c_2$, then $c_1=c_{12}$. We prove that $c_1 > c_{12}$, which will imply that the case $c_1 > c_2$ is impossible, and hence $c_1=c_2$. Suppose, contrary to our claim, that  $c_1 = c_{12} \geq c_2$. If $\partial B_R \not\subset \partial E_1$, then we argue as in step \ref{proof:annuli:i} to deduce that the best configuration for $E_1$ and $E_2$ in this case is given by $\mathcal{O}_{\frac{1-R}{2}}$, which is impossible. Therefore, $\partial B_R \subset \partial E_1$. 
Then, the $C^1$-smoothness of $\partial E_1$ implies that no arc of $\partial_f E_1$ can have an end-point on $\partial B_R$. Hence, since $c_1 = c_{12}$, we conclude that there exists a circle $\partial B_0 \subset \partial E_1$. Suppose that $B_0 \subset \Omega$. Since $E_1$ has only one connected component and $c_1=c_{12} > 0$, we see that $E_1 \subset B_0$. However, it contradicts the fact that $\partial B_0 \subset \partial E_1$. 
Suppose now that $B_R \subset B_0$. Since, again, $E_1$ has only one connected component and $c_1=c_{12} > 0$, we deduce that $E_1 \subset B_0 \setminus \overline{B_R}$ and $E_2 \subset B_1 \setminus \overline{B_0}$. Moreover, it is easy to see that $E_1 = B_0 \setminus \overline{B_R}$. Denoting by $s$ the radius of $B_0$ and recalling that $\partial_f E_1 \cap \partial B_0$ has nonempty interior, we get the following contradiction:
$$
h_1(E_1) = \frac{2}{s-R} = \frac{1}{s} = c_1
\quad \implies \quad 
s < 0.
$$
Therefore, $c_1 \neq c_{12}$. Finally, applying Proposition~\ref{prop1}, we conclude that $c_1 = c_2$. 

\ref{proof:annuli:v} Let us show now that $c_{12} = 0$.
Suppose, by contradiction and without loss of generality, that $c_{12} > 0$. 
To prove the claim, we will study the behaviour of $\partial E_1$ and $\partial E_2$. 

\begin{figure}[ht]
	\begin{minipage}[t]{0.48\linewidth}
		\centering
		\includegraphics[width=1\linewidth]{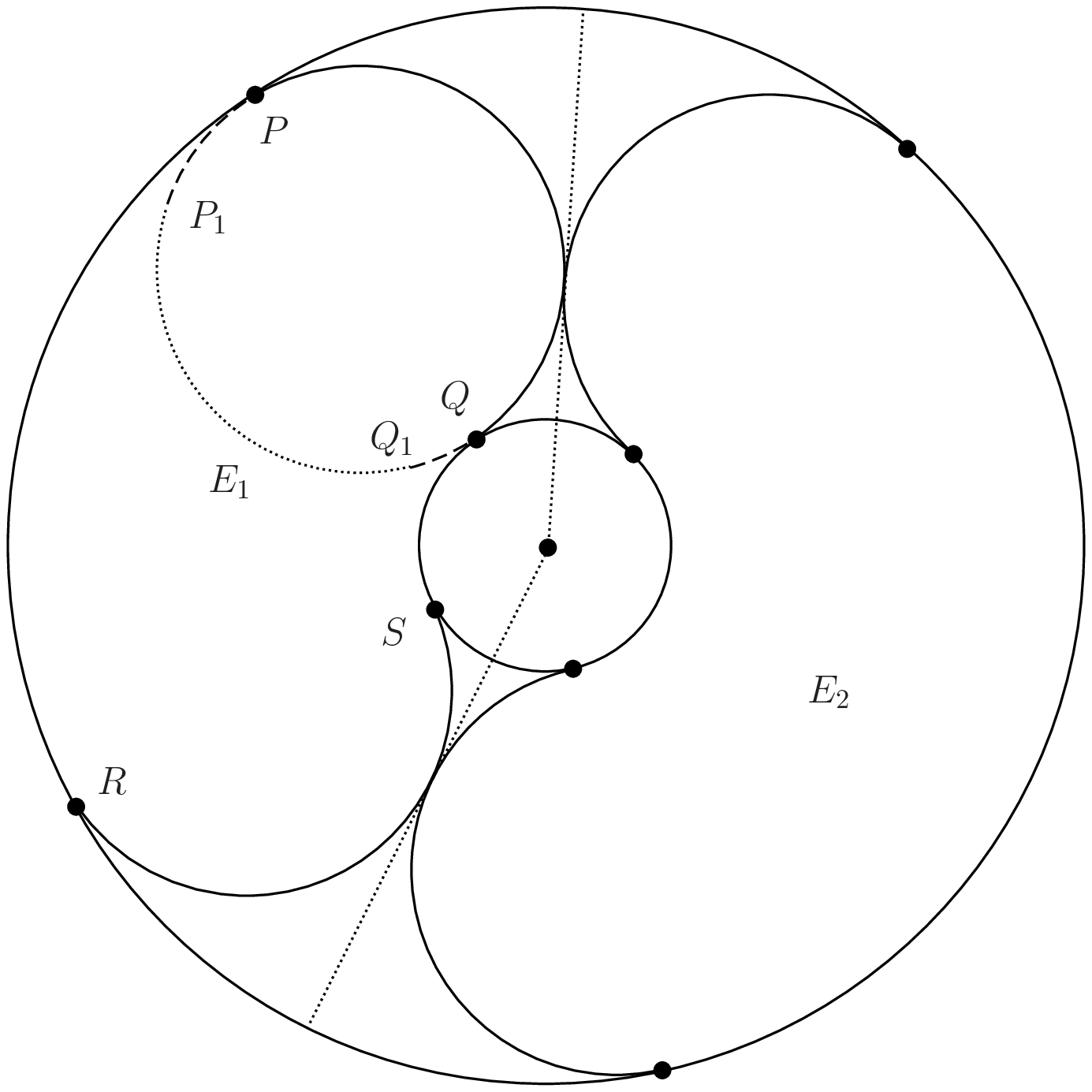}
		\caption{}
		\label{fig:Fig2}
	\end{minipage}
	\hfill
	\begin{minipage}[t]{0.48\linewidth}
		\centering
		\includegraphics[width=1\linewidth]{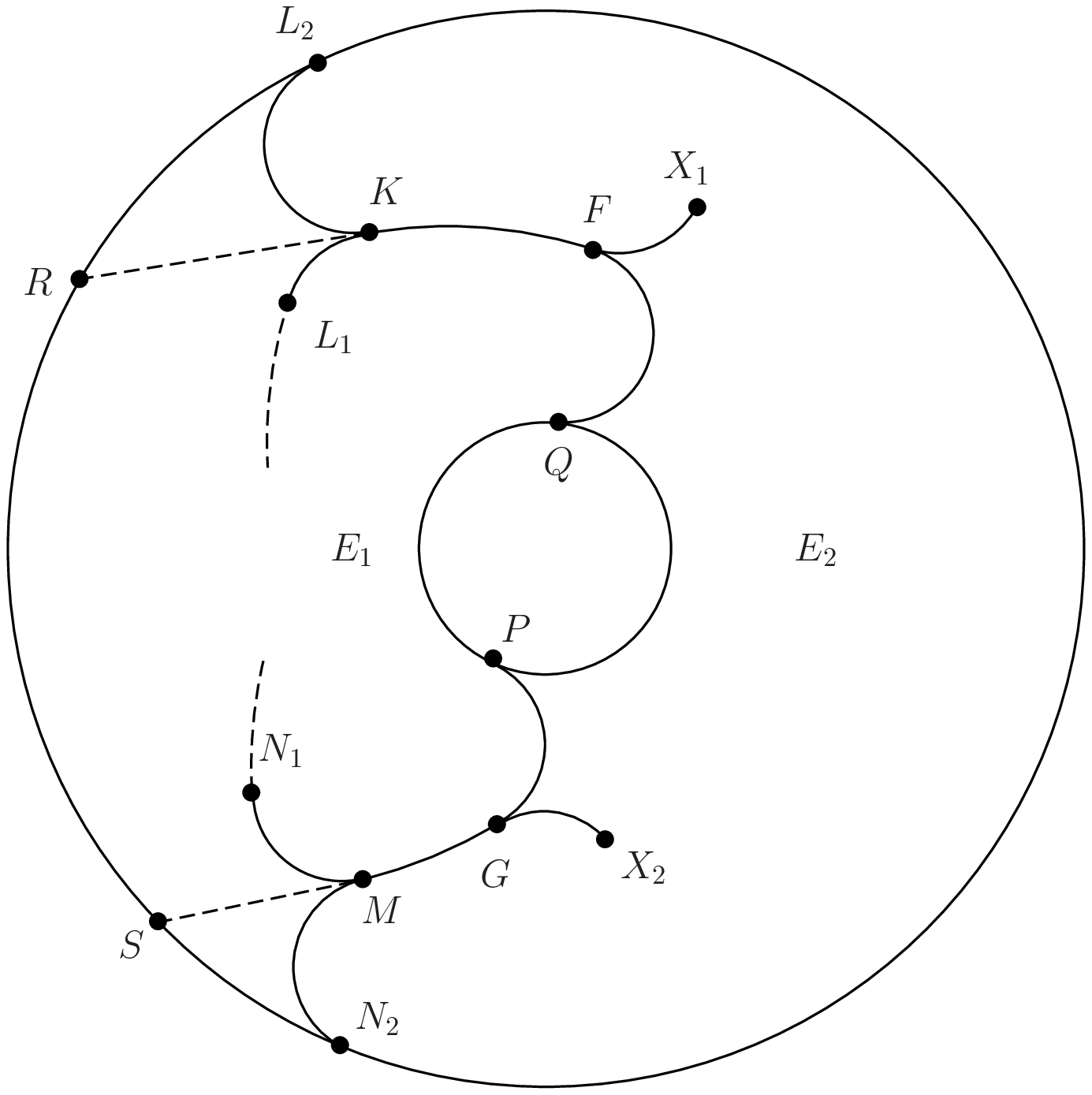}
		\caption{}
		\label{fig:Figx5}
	\end{minipage}
\end{figure}

From step \ref{proof:annuli:ii} we know that $\partial E_1 \cap \partial B_R$ has nonempty interior. 
Take an arc $PQ \subset \partial E_1 \cap \partial B_R$ and points $F, G \in \partial(E_1 E_2)$ such that the arc $QF \subset \partial_f E_1$ and $PG \subset \partial_f E_1$, see Fig.~\ref{fig:Figx5}. 
Note that such $F$ and $G$ exist, since otherwise, without loss of generality, either $F \in \partial B_R$ or $F \in \partial B_1$. The first case is impossible, since then $F=Q$ and the contradiction follows, for instance, as in \eqref{eq:ring_contr1}. The second case is impossible, since then the best configuration will be given by $\mathcal{O}_{\frac{1-R}{2}}$ which is not optimal, see step \ref{proof:annuli:i} and Lemma \ref{lem:h1(Omega')}.
Consider maximal arcs $FK, GM \subset \partial (E_1 E_2)$. 

1) Suppose first that $FK \equiv GM$, i.e., $F = M$ and $K = G$, see Fig.~\ref{fig:Figx6}.
Consider arcs $FX_1, GX_2 \subset \partial_f E_2$. 
There are several possible positions of $X_1$ and $X_2$: 

\begin{itemize}
	\item[(a)] Assume first that $X_1, X_2 \in \partial B_1$. 
	Let $GF$ be the arc of a circle $\partial B_0$. Let us show that $B_0$ is concentric with $B_1$ and $B_R$. Suppose the claim is false. Noting that the arcs $PG$ and $GX_2$ cannot be of angle greater than $\pi$ \cite[Lemma~2.11]{leonardipratelli}, we easily get a contradiction, see Fig.~\ref{fig:Figx6}. 
	Therefore, $B_0$ is concentric with $B_1$ and $B_R$, and $E_1 \subset B_0 \setminus \overline{B_R}$ and $E_2 \subset B_1 \setminus \overline{B_0}$. However, it is known that rings $B_0 \setminus \overline{B_R}$ and $B_1 \setminus \overline{B_0}$ are calibrable \cite{Bueno}, that is, they are Cheeger sets of themselves. This implies that $E_1 = B_0 \setminus \overline{B_R}$ and hence $\partial(E_1E_2) = \partial B_0$, i.e., $\partial(E_1E_2)$ is closed. A contradiction to step \ref{proof:annuli:iii}.
	\item[(b)] Assume now that $X_1 = G$ and $X_2 = F$. That is, the arc $G X_2 \subset \partial_f E_2$ touches $PG$ and $QF$ at corresponding points.
	Since the arc $FG \subset \partial(E_1 E_2)$ also touches $PG$ and $QF$ at the same points as $GX_2$, we readily get a contradiction to the fact that $c_2, c_{12} > 0$.
	\item[(c)] Other positions for $X_1$ and $X_2$ are impossible by evident reasons. 
\end{itemize}

\begin{figure}[ht]
	\begin{minipage}[t]{0.48\linewidth}
		\centering
		\includegraphics[width=1\linewidth]{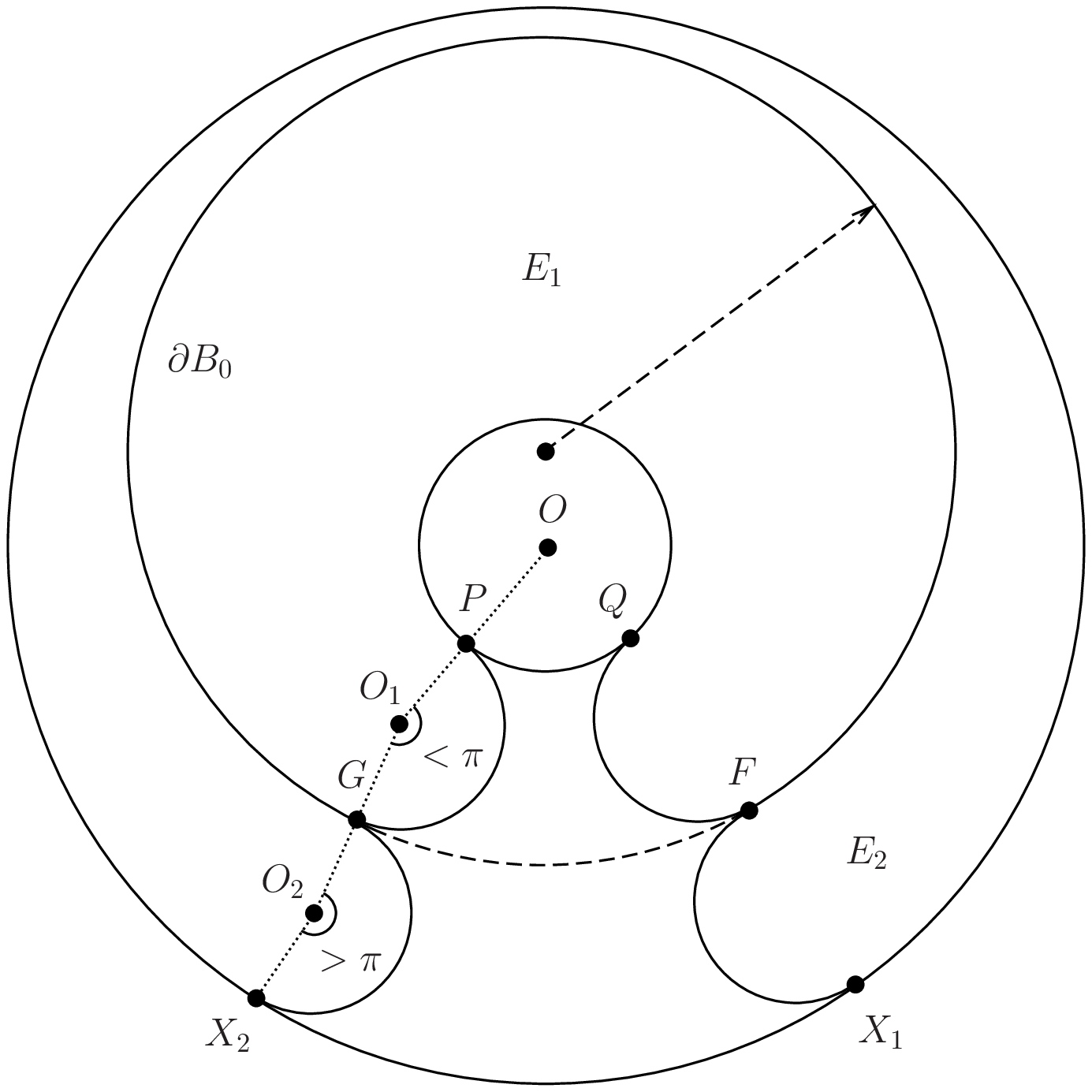}
		\caption{}
		\label{fig:Figx6}
	\end{minipage}
	\hfill
	\begin{minipage}[t]{0.48\linewidth}
		\centering
		\includegraphics[width=1\linewidth]{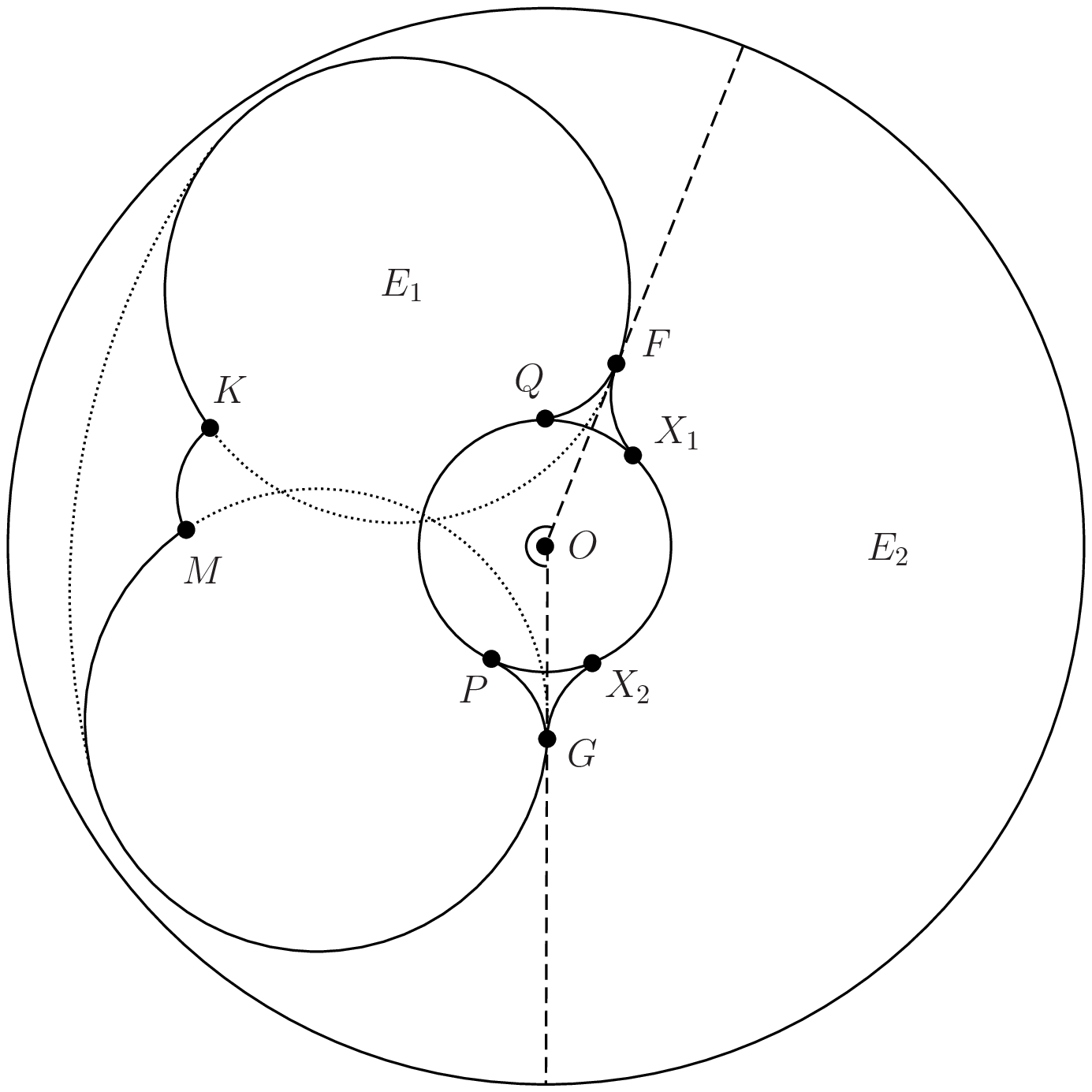}
		\caption{}
		\label{fig:Figx8}
	\end{minipage}
\end{figure}

2) Suppose now that $FK \not\equiv GM$, i.e., $F \neq M$ and $K \neq G$, see Fig.~\ref{fig:Figx5}.
Then, consider arcs $KL_1, MN_1 \subset \partial_f E_1$ and $KL_2, MN_2 \subset \partial_f E_2$. 
Consider also a segment $KR$ tangent to the arc $FK$ at the point $K$ such that $R \in \partial B_1$, and, analogously, a segment $MS$ tangent to the arc $GM$ at $M$ such that $S \in \partial B_1$. 
Recall that the curvature of $\partial E_1$ is piecewise nonnegative except of the part $\partial E_1 \cap \partial B_R$. 
Therefore, it is not hard to see that if $KR$ and $GM$ intersect at a point $T$ inside $\Omega$, then $E_1$ is contained inside a set bounded by a closed path 
\begin{equation}\label{eq:Z1_path1}
P \to \text{by the arc } \partial E_1 \cap \partial B_R \to Q \to F \to K \to T \to M \to G \to P.
\end{equation}
Analogously, if $KR$ and $GM$ do not intersect inside $\Omega$ (as it is depicted on Fig.~\ref{fig:Figx5}), then $E_1$ is contained inside a set bounded by a closed path 
\begin{equation}\label{eq:Z1_path2}
P \to \text{by the arc } \partial E_1 \cap \partial B_R \to Q \to F \to K \to R \to S \to M \to G \to P.
\end{equation}
Let us denote a set bounded by paths \eqref{eq:Z1_path1} or \eqref{eq:Z1_path2} as $\mathcal{Z}_1$. That is, $E_1 \subset \mathcal{Z}_1$.
This implies that $L_2, N_2 \in \partial B_1$. Indeed, if, say, $L_2 \in \partial(E_1 E_2)$, then the only possibility is $L_2 = M$. However, recalling that $c_{12} > 0$, it is not hard to observe that the arcs $FX_1, GX_2 \subset \partial_f E_2$ will intersect $\partial B_R$ transversally (see, for example, Fig.~\ref{fig:Figx1}), which is impossible in view of the $C^1$-smoothness of $\partial E_2$. The case $L_2 \in \partial B_R$ is obviously impossible, too. Thus, we conclude that $L_2 \in \partial B_1$. By the same arguments, $N_2 \in \partial B_1$. 

These facts readily imply that $E_2$ is located inside a set $\mathcal{Z}_2$ bounded by a closed path 
$$
Q \to F \to K \to L_2 \to  \text{by } \partial B_1 \setminus RS \to N_2 \to M \to G \to P \to \text{by } \partial B_R \setminus \partial E_1 \to Q.
$$

3) Suppose that $L_1 = M$. Consider arcs $FX_1, GX_2 \subset \partial_f E_2$. 
There are two cases for $X_1$:

\begin{enumerate}[label=(\alph*)]
	\item Assume first that $X_1 \neq G$ (and hence $X_2 \neq F$). Therefore, since $E_1 \subset \mathcal{Z}_1$, we see that $X_1, X_2 \not\in \partial (E_1E_2)$. 
	If $X_1 \in \partial B_1$, then we get at least two connected components of $E_2$, which is impossible. Analogously, $X_2 \not\in \partial B_1$. Therefore, we must have $X_1, X_2 \in \partial B_R$, see Fig.~\ref{fig:Figx8}.
	If the angle $\angle FOG$ measured from $E_1$ is less than or equal to $\pi$, then $E_1$ is contained strictly inside a half-ring, which contradicts Lemma~\ref{lem:h1(Omega')}.
	Thus, $\angle FOG > \pi$. 
	However, it is not hard to see that, in this case, $FK$ and $GM$ cannot be connected by $KL_1 = KM \subset \partial_f E_1$ in such a way that $\partial E_1$ is $C^1$-smooth, since $c_1 \geq c_{12}$. A contradiction. 
	\item Assume now that $X_1 = G$ (and hence $X_2 = F$). Then we see that $\angle FOG > \pi$ and, as above, we see that this case is impossible. 
\end{enumerate}

4) Suppose now that $L_1 \neq M$. Then we deduce that $L_1 \in \partial B_1$ and $N_1 \in \partial B_1$. Indeed, since $L_1 \neq M$, we have $L_1 \not\in \mathcal{Z}_2$ and hence $L_1 \not\in \partial(E_1 E_2)$. If $L_1 \in \partial B_R$, then $L_1 = P$ and $FK = GM$, which is impossible as shown in case 1) above. 
Thus, $L_1 \in \partial B_1$ and, by the same arguments, $N_1 \in \partial B_1$. 

5) At the end, we have $L_1, L_2, N_1, N_2 \in \partial B_1$. However, recalling that $c_{12} > 0$ by the assumption, and $c_1 = c_2$ by step \ref{proof:annuli:iv}, it is not hard to see that an arc $FX_1 \subset \partial_f E_2$ will intersect with $\partial B_R$ not transversely, see Fig.~\ref{fig:Figx1}. A contradiction to the $C^1$-smoothness of $\partial E_2$. 
Thus, $c_{12} = 0$.

\begin{figure}[ht]
	\begin{minipage}[t]{0.49\linewidth}
		\centering
		\includegraphics[width=1\linewidth]{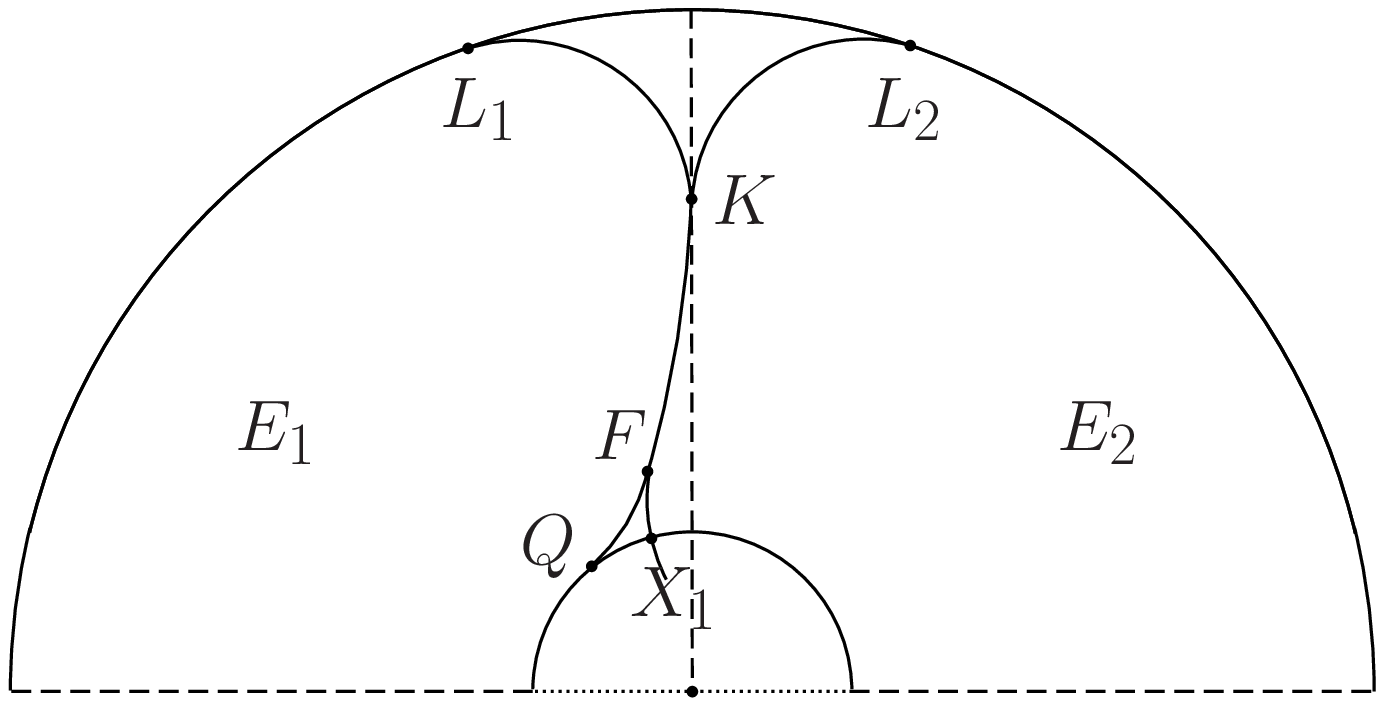}
		\caption{}
		\label{fig:Figx1}
	\end{minipage}
	\hfill
	\begin{minipage}[t]{0.49\linewidth}
		\centering
		\includegraphics[width=1\linewidth]{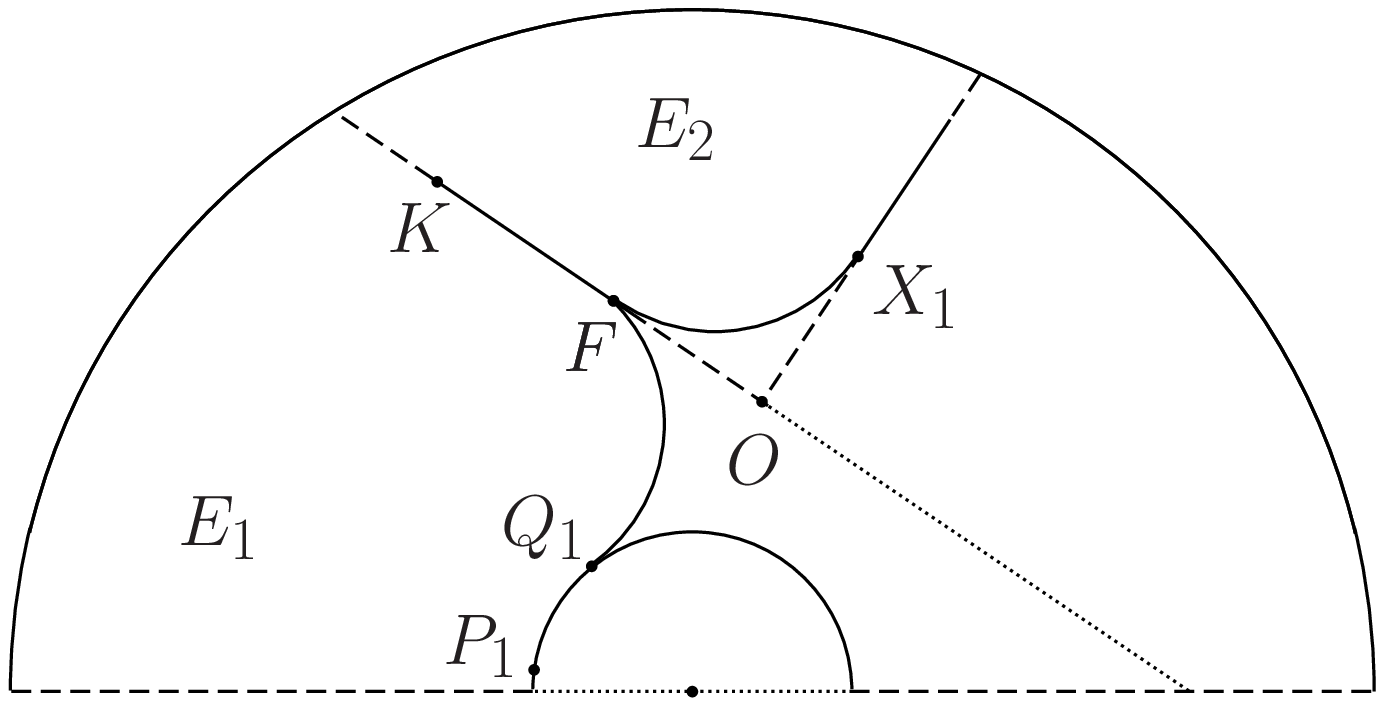}
		\caption{}
		\label{fig:Fig5}
	\end{minipage}
\end{figure}

\ref{proof:annuli:vi} Let us finish the proof by showing that the reference configuration is optimal. 
Since $c_{12} = 0$ by step \ref{proof:annuli:v}, we see from step \ref{proof:annuli:ii} that the arcs $P_1 Q_1 := \partial E_1 \cap \partial B_R$ and $P_2 Q_2 := \partial E_2 \cap \partial B_R$ are nonempty and they are the only parts of $\partial E_1$ and $\partial E_2$ with strictly negative curvature. 

Take, without loss of generality, an arc $P_1 Q_1$, and let $F \in \partial(E_1 E_2)$ be such that the arc $Q_1 F \subset \partial_f E_1$. 
We want to prove that the arc $FK \subset \partial (E_1 E_2)$ is ``perpendicular'' to $\partial B_R$. 
Let $X_1$ be the corresponding end point of the arc $F X_1 \subset \partial_f E_2$. 
Then there are three possibilities for a position of $X_1$:
\begin{enumerate}[label=(\alph*)]
	\item $X_1 \in \partial B_R$. In this case $FK$ is ``perpendicular'' to $\partial B_R$. 
	\item\label{enum:b} $X_1 \in \partial (E_1 E_2)$. 
	Due to piecewise nonnegativity of the curvature of $\partial E_2$ (except of the part $\partial E_2 \cap \partial B_R$), we see that $E_2$ must be contained inside the intersection of $\Omega$ with a cone based on the angle $\angle FOX_1$. 
	At the same time, this cone is contained inside a small segment of $B_1$ based on the line $FK$. 
	However, it is not hard to see that the reference configuration is better. A contradiction. 
	\item $X_1 \in \partial B_1$. 
	As in the case \ref{enum:b}, we see that $E_2$ must be contained in a small segment of $B_1$ based on the line $FK$. A contradiction. 
\end{enumerate}

Therefore, any segment of $\partial (E_1 E_2)$ which is connected with $\partial \Omega$ by an arc of a free boundary must be connected with both $\partial B_1$ and $\partial B_R$ and it is ``perpendicular'' to $\partial B_R$. 
Since, in view of step \ref{proof:annuli:ii}, $\partial E_i \cap \partial B_R$ has nonempty interior for $i=1,2$, we see that there are at least two segments of $\partial (E_1 E_2)$ which are connected with $\partial B_R$. Therefore, either $E_1$ or $E_2$ is contained inside a half-ring. From the uniqueness result of Lemma~\ref{lem:h1(Omega')} we conclude that $E_i$ coincides (up to rotation) with $\mathcal{O}_{r_0}$, and therefore the reference configuration is optimal.

\subsection{Second Cheeger constant for a disc}
Let $\Omega$ be a disc in $\mathbb{R}^2$ and let $(E_1, E_2) \in \mathcal{E}_2$ be a $2$-adjusted Cheeger couple of $\Omega$. 
Recall that $c_1$ and $c_1$ are the mean curvatures of the free boundaries $\partial_f E_1$ and $\partial_f E_2$, respectively, and $c_{12}$ is the mean curvature of the contact surface $\partial(E_1 E_2)$ measured from $E_1$.

It was shown in \cite{Parini} that each $E_i$ must be a first Cheeger set of a half-ball of $\Omega$. 
The last steps of the proof on \cite[pp.13-14]{Parini} rely on \cite[Theorem~3.9]{Parini}. However, as we described in Section \ref{sec:intro}, this theorem is not correct. 
Let us show that Proposition \ref{prop1} can be applied to overcome the usage of \cite[Theorem 3.9]{Parini}. 
From \cite[Section~4]{Parini} we know that $\partial(E_1E_2)$ has nonempty interior and cannot have closed connected components. Let $PQ$ be an arc of $\partial(E_1E_2)$ such that there are arcs $PE \in \partial_f E_1$ and $PF \in \partial_f E_2$, where $E,F \in \partial \Omega$, see \cite[Figure 3]{Parini}. 

If $c_1 = c_2$ and $c_{12} \neq 0$, then, as in \cite[p.~13]{Parini}, either $E_1$ or $E_2$ will be a subset of a circular segment strictly contained in a half-disc. However, the configuration with the Cheeger sets of the two half-disks is better, a contradiction. 
Therefore, either $c_1 = c_2$ and $c_{12} = 0$, or $c_1 \neq c_2$. 
Let us drop out the second case. 
Assume, without loss of generality, that $c_{12} \geq 0$. Then Proposition \ref{prop1} implies that $c_1 \geq c_2$. If we suppose that $c_1 > c_2$, then Proposition \ref{prop1} yields $c_1 = c_{12}$. This means that $E_1$ must be a ball of radius $r_1 = \frac{1}{c_1}$. However, it is impossible, since $h_1(E_1) = c_1$, but the Cheeger constant of the ball $E_1$ equals to $\frac{2}{r_1}$, that is, $h_1(E_1) = 2c_1$. Since $c_1 > 0$, we get a contradiction. 
Thus, the case where $c_1 = c_2$ and $c_{12} = 0$ is the only possible, and this directly leads to the optimal configuration.

\bigskip
\noindent
{\bf Acknowledgments.}
The article was started during a visit of E.P. at the University of West Bohemia and was continued during a visit of V.B. at Aix-Marseille University. The authors wish to thank the hosting institutions for the invitation and the kind hospitality. 
V.B. was supported by the project LO1506 of the Czech Ministry of Education, Youth and Sports.


\end{document}